\theoremstyle{plain}
\newtheorem{theorem}{Theorem}[section]
\newtheorem{corollary}[theorem]{Corollary}
\newtheorem{lemma}[theorem]{Lemma}
\newtheorem{proposition}[theorem]{Proposition}
\theoremstyle{definition}
\newtheorem{remark}[theorem]{Remark}
\newtheorem{example}[theorem]{Example}
\newtheorem{definition}[theorem]{Definition}
\newtheorem*{notation}{Notation}
\newtheorem*{ack}{Acknowledgement}
\newcommand {\I} {\mathbb{1}}
\newcommand {\real} {\ensuremath{\mathbb{R}}}
\newcommand {\rd} {\ensuremath{{\real^d}}}
\newcommand {\ZZ} {\ensuremath{\mathbb{Z}}}
\newcommand {\nat} {\ensuremath{\mathbb{N}}}
\newcommand{\process}[1]{\{#1_t\}_{t\geq0}}
\newcommand{\Cclip}{C_c^{\mathrm{Lip}}}
\newcommand{\et}{\quad\text{and}\quad}
\newcommand{\downto}{\downarrow}
\newcommand\supp{\mathop{\operatorname{supp}}}
\newcommand\diag{\mathrm{diag}}
\numberwithin{equation}{section}
\begin{document}
\allowdisplaybreaks[4]

\title{Markov Chain Approximation of Pure Jump Processes}

\author[A.\ Mimica]{Ante Mimica\,$(\dagger)$}
\address[Mimica]{Ante Mimica, *20-Jan-1981\,-- 
	$\dagger$\,9-Jun-2016 
	\\
\url{https://web.math.pmf.unizg.hr/~amimica/}}

\author[N.\ Sandri\'{c}]{Nikola Sandri\'{c}}
\address[Sandri\'{c}]{Faculty of Civil Engineering\\University of Zagreb\\10000 Zagreb\\Croatia}
\email{nsandric@grad.hr}

\author[R.\,L.~Schilling]{Ren\'e L.\ Schilling}
\address[Schilling]{Institut f\"{u}r Mathemati\-sche Stochastik\\Fachrichtung Mathematik\\TU Dresden\\01062 Dresden\\Germany}
\email{rene.schilling@tu-dresden.de}

\subjclass[2010]{60J25, 60J27, 60J75}
\keywords{non-symmetric Dirichlet form, non-symmetric Hunt process, Markov chain, Mosco convergence, semimartingale, semimartingale characteristics, weak convergence}

\begin{abstract}
    In this paper we discuss weak convergence of continuous-time Markov chains to a non-symmetric pure jump process. We approach this problem using Dirichlet forms as well as semimartingales. As an application, we discuss how to approximate a given Markov process by Markov chains.
\end{abstract}

\maketitle

\section{Introduction}
Let $\mathbf{X}^{n}$, $n\in\nat$ be a sequence of continuous-time Markov chains where $\mathbf{X}^n$ takes values on the lattice $n^{-1}\mathbb{Z}^{d}$, and let $\mathbf{X}$ be a Markov process on $\real^{d}$. We are interested in the following two questions:
\begin{itemize}
    \item [(i)] Under which conditions does $\{\mathbf{X}^{n}\}_{n\in\nat}$ converge weakly to some (non-symmetric) Markov process?
    \item [(ii)] Can a given Markov process $\mathbf{X}$ be approximated (in the sense of weak convergence) by a sequence of Markov chains?
\end{itemize}
These questions have a long history. If $\mathbf{X}$ is a diffusion process determined by a generator in non-divergence form these problems have been studied in \cite{Stroock-Varadhan-Book-2006} using martingale problems. The key ingredient in this approach is that the domain of the corresponding generator is rich enough, i.e.\ containing the test functions $C_c^{\infty}(\real^{d})$. On the other hand, if the generator of $\mathbf{X}$ is given in divergence form, it is a delicate matter to find non-trivial functions in its domain. In order to overcome this problem, one resorts to an $L^{2}$-setting and the theory of Dirichlet forms; for example, \cite{Stroock-Zheng-1997} solve these problems for symmetric diffusion processes $\mathbf{X}$ using Dirichlet forms.
The main assumptions are certain uniform regularity conditions and the boundedness of the range of the conductances of the approximating Markov chains. These results are further extended in \cite{Bass-Kumagai-2008}, where the uniform regularity condition is relaxed and the conductances may have unbounded range. Very recently, \cite{Deuschel-Kumagai-2013} discusses these questions for a non-symmetric diffusion process $\mathbf{X}$. Let us also mention that the problem of approximation of a reflected Brownian motion on a bounded domain 
in $\real^{d}$ is studied in \cite{Burdzy-Chen-2008}.

As far as we know, the paper \cite{Husseini-Kassmann-2007} is among the first papers studying the approximation of a jump process $\mathbf{X}$. In this work the authors investigate convergence to and approximation of a symmetric jump process $\mathbf{X}$ whose jump kernel is  comparable to the jump kernel of a symmetric stable L\'evy process. These results have been extended in \cite{Bass-Kassmann-Kumagai-2010}, where the comparability assumption is imposed on the small jumps only, whereas the big jumps are controlled by a certain integrability condition. The case where $\mathbf{X}$ is a symmetric process which has both a continuous and a jump part is dealt with in \cite{Bass-Kumagai-Uemura-2010}.

 Let us point out that all of these approaches require some kind of ``stable-like" property (or control) of the jump kernel,
and the main step in the proofs is to obtain heat kernel estimates of the chains $\{\mathbf{X}^{n}\}_{n\in\nat}$. This is possible due to the uniform ellipticity assumption in the continuous case and the ``stable-like" assumption in the jump case. In general, this is very difficult to verify, and in many cases it is even impossible. Using a completely different approach, \cite{Chen-Kim-Kumagai-2013} study the convergence and approximation problems for pure jump processes $\mathbf{X}$ on a metric measure space satisfying the volume doubling condition. The proof of tightness is based on methods developed in \cite[Lemma 2.1]{Burdzy-Chen-2008} and only works if the approximating Markov chains $\mathbf{X}^n$ are symmetric. In order  to prove the convergence of the finite-dimensional distributions of $\{\mathbf{X}^{n}\}_{n\in\nat}$ to those of $\mathbf{X}$,
Mosco convergence of the corresponding symmetric Dirichlet forms is used. This type of convergence is equivalent to strong convergence of the corresponding semigroups. It was first obtained in \cite{Mosco-1994} in the case when all the forms are defined on the same Hilbert space, and then it was generalized in \cite{Kim-2006} (see also \cite{Chen-Kim-Kumagai-2013} and \cite{Kuwae-Shioya-2003}) to the case where the forms are defined on different spaces.

We are interested in the convergence and approximation problems for \emph{non-symmetric} pure jump processes. We will use two approaches: (i) via Dirichlet forms, and (ii) via semimartingale convergence results.  The first approach (Section~\ref{sec21}--\ref{sec23}) follows the roadmap laid out in \cite{Chen-Kim-Kumagai-2013}: To obtain tightness of $\{\mathbf{X}^{n}\}_{n\in\nat}$ we use semimartingale convergence results developed in \cite{Jacod-Shiryaev-2003}. More precisely, we first ensure that the processes $\mathbf{X}^{n}$, $n\in\nat$, are regular Markov chains (in particular, they are semimartingales), then we compute their semimartingale characteristics, and finally we provide conditions for the tightness of $\{\mathbf{X}^{n}\}_{n\in\nat}$ in terms of the corresponding conductances. This is based on a result from \cite{Jacod-Shiryaev-2003}  which states that a sequence of semimartingales is tight if the corresponding characteristics are $C$-tight (i.e. tight and  all accumulation points are processes with continuous paths). To get the convergence of the finite-dimensional distributions of $\{\mathbf{X}^{n}\}_{n\in\nat}$ in the non-symmetric case we can still use  Mosco convergence, but for non-symmetric Dirichlet  forms. Just as in the symmetric case, this type of convergence is equivalent to the strong convergence of the corresponding semigroups. It  was first obtained in \cite{Hino-1998} for forms defined on the same Hilbert space, and and then it was generalized in \cite{Tolle-Thesis-2006} to forms living on different spaces.

Our second approach (Section~\ref{sce31}), is based on a result from \cite{Jacod-Shiryaev-2003} which provides general conditions under which a sequence of semimartingales converges weakly to a semimartingale. If the processes $\mathbf{X}^{n}$, $n\in\nat$, are regular Markov chains and if the limiting process $\mathbf{X}$ is a so-called (pure jump) homogeneous diffusion with jumps, we obtain conditions (in terms of conductances and characteristics of $\mathbf{X}$) which imply the desired convergence.

As an application, we can now answer question (ii) and provide conditions for the approximation of a given Markov process, both in the Dirichlet form set-up (Section~\ref{sec24}) and the semimartingale setting (Section~\ref{sec32}).

\begin{notation}
    Most of our notation is standard or self-explanatory.
    Throughout this paper, we write $\ZZ^d_n := \frac 1n\ZZ^d = \{\frac 1n m : m\in\ZZ^d\}$ for the $d$-dimensional lattice with grid size $\frac 1n$.
    For $p\geq1$ we use $L^p_n$ as a shorthand for $L^p(\ZZ^d_n)$, the standard $L^p$-space on $\ZZ^d_n$. If $p=2$, the scalar product is given by $\langle f,g\rangle_{L_n^2} :=  \sum_{a\in\ZZ_n^d} f(a)g(a)$. We write $C_c^k(\rd)$ for the $k$-times continuously differentiable, compactly supported functions, and $C_c^{\mathrm{Lip}}(\real^{d})$ is the space of Lipschitz continuous functions with compact support. $B_r(x)$ is the open ball with radius $r>0$ and centre $x$, $\diag = \{(x,x) : x\in\rd\}$ denotes the diagonal in $\rd$.
    Finally, the sum $A+B$ of subsets $A,B\subseteq\real^d$ is defined as
    $A+B=\{a+b:a\in A, b\in B\}$.
\end{notation}

\section{Convergence of Markov chains using Dirichlet forms}\label{s2}
Our starting point is a sequence of continuous-time Markov chains $\process{X^{n}}$ with state space $\ZZ^d_n$ and infinitesimal generator
\begin{gather*}
    \mathcal{A}^{n}f(a)
    =\sum_{b\in\ZZ_n^{d}}(f(b)-f(a))C^{n}(a,b),\quad f\in\mathcal{D}_{\mathcal{A}^{n}},
\intertext{where the domain is given by}
    \mathcal{D}_{\mathcal{A}^{n}}
    :=\Big\{f:\ZZ_n^{d}\to\real : \sum_{b\in\ZZ_n^{d}}|f(b)|C^{n}(a,b)<\infty \text{\ for all\ } a\in\ZZ_n^{d}\Big\}.
\end{gather*}
A sufficient condition for the existence of $\mathbf{X}^n$ is that the kernel $C^{n}:\ZZ_n^{d}\times\ZZ_n^{d}\to[0,\infty)$ satisfies the following two properties
\begin{gather}
    \label{t1}\tag{\textbf{T1}} \forall n\in\nat,\:\forall a\in\ZZ_n^d \::\: C^{n}(a,a)=0;\\
    \label{t2}\tag{\textbf{T2}} \forall n\in\nat \::\: \sup_{a\in\ZZ_n^{d}}\sum_{b\in\ZZ_n^{d}}C^{n}(a,b)<\infty,
\end{gather}
see e.g.\ \cite{Norris-Book-1998}; in this case, the chain $\process{X^{n}}$ is regular, i.e.\ it has only finitely many jumps on finite time-intervals. If the chain is in state $a\in\ZZ^d$, it jumps to state $b\in\ZZ^d$ with probability $C^{n}(a,b)/\sum_{c\in\ZZ_n^{d}}C^{n}(a,c)$ after an exponential waiting time with parameter $\sum_{c\in\ZZ_n^{d}}C^{n}(a,c)$. Moreover, $\process{X^{n}}$ is conservative and defines a semimartingale. Observe that \eqref{t1} implies $L_n^{\infty}\cup L^1_n\cup L^2_n\subseteq\mathcal{D}_{\mathcal{A}^{n}}$. Indeed,  we have
\begin{gather*}
    \sum_{b\in\ZZ_n^{d}}|f(b)|C^{n}(a,b)
    \leq
    \rVert f\lVert_\infty\sup_{a\in\ZZ_n^{d}}\sum_{b\in\ZZ_n^{d}}C^{n}(a,b),\\
    \sum_{b\in\ZZ_n^{d}}|f(b)|C^{n}(a,b)
    \leq
    \sup_{a\in\ZZ_n^{d}}\sum_{b\in\ZZ_n^{d}}C^{n}(a,b)\sum_{b\in\ZZ_n^{d}}|f(b)|
    =
    \rVert f\lVert_{L^{1}_n}\sup_{a\in\ZZ_n^{d}}\sum_{b\in\ZZ_n^{d}}C^{n}(a,b)
\intertext{and, using the Cauchy--Schwarz inequality,}
\begin{aligned}
    \sum_{b\in\ZZ_n^{d}}|f(b)|C^{n}(a,b)
    &=\sum_{b\in\ZZ_n^{d}}|f(b)|\sqrt{C^{n}(a,b)}\sqrt{C^{n}(a,b)}\\
    &\leq\bigg(\sum_{b\in\ZZ_n^{d}}|f(b)|^{2}C^{n}(a,b)\bigg)^{1/2} \bigg(\sum_{b\in\ZZ_n^{d}}C^{n}(a,b)\bigg)^{1/2}\\
    &\leq\rVert f\rVert_{L_n^2}\sup_{a\in\ZZ_n^{d}}\sum_{b\in\ZZ_n^{d}}C^{n}(a,b).
\end{aligned}
\end{gather*}

We are interested in conditions which ensure the convergence of the family $\{\mathbf{X}^n\}_{n\in\nat}$ as $n\to\infty$.

\subsection{Tightness}\label{sec21}

The proof of convergence relies on convergence criteria for semimartingales; our standard reference will be the monograph \cite{Jacod-Shiryaev-2003}. Let $\process{S}$ be a $d$-dimensional semimartingale on the stochastic basis $(\Omega,\mathcal{F},\process{\mathcal{F}},\mathbb{P})$, and denote by $h:\real^{d}\to\real^{d}$ a truncation function, i.e.\ a bounded and continuous function which such that $h(x)=x$ in a neighbourhood of the origin. Since a semimartingale has c\`adl\`ag (right-continuous, finite left limits) paths, we can write $\Delta S_t := S_t - S_{t-}$, $t>0$, and $\Delta S_0 := S_0$, for the jumps of $S$ and set
$$
    \bar{S}(h)_t
    :=\sum_{s\leq t}(\Delta S_s-h(\Delta S_s))
    \et
    S(h)_t:=S_t-\bar{S}(h)_t.
$$
The process $\process{S(h)}$ is a special semimartingale, i.e. it admits a unique decomposition
\begin{equation}\label{eq1.1}
    S(h)_t=S_0+M(h)_t+B(h)_t,
\end{equation}
where $\process{M(h)}$ is a local martingale and $\process{B(h)}$ is a predictable process of bounded variation on compact time-intervals.

\begin{definition}
    Let $\process{S}$  be a semimartingale and $h:\real^{d}\to\real^{d}$ be a truncation function. The \emph{characteristics} of the semimartingale (relative to the truncation $h$) is a triplet $(B,A,N)$ consisting of the bounded variation process $B = \process{B(h)}$ appearing in \eqref{eq1.1}, the compensator $N=N(\omega,ds,dy)$ of the jump measure
    $$
        \mu(\omega,ds,dy):=\sum_{s:\Delta S_s(\omega)\neq 0}\delta_{(s,\Delta S_s(\omega))}(ds,dy)
    $$
    of the semimartingale $\process{S}$ and the quadratic co-variation process
    $$
        A_t^{ik}= \langle S^{i,c}_t,S^{k,c}_t\rangle,\quad i,k=1,\dots, d,\: t\geq 0,
    $$
    of the continuous part $\process{S^c}$ of the semimartingale.

    The \emph{modified characteristics} is the triplet $(B,\tilde{A},N)$ where $\tilde{A}(h)^{ik}_t:=\langle M(h)^{i}_t,M(h)^{k}_t\rangle_{L^2}$, $i,k=1,\dots,d$, with $\process{M(h)}$ being the local martingale appearing in \eqref{eq1.1}.
\end{definition}

Using \cite[Proposition II.2.17 and Theorem II.2.42]{Jacod-Shiryaev-2003} we can easily obtain the (modified) characteristics of $\process{X^{n}}$ from the infinitesimal generator; as before, we write $h:\real^{d}\to\real^{d}$ for the truncation function:
\begin{align*}
    B^{n}(h)_t
    &=\int_0^{t}\sum_{b\in\ZZ_n^{d}}h(b)C^{n}(X_s^{n},X_s^{n}+b)\,ds,\\
    \tilde{A}^{n}(h)_t^{ik}
    &=\int_0^{t}\sum_{b\in\ZZ_n^{d}}h_{i}(b)h_{k}(b)C^{n}(X_s^{n},X_s^{n}+b)\,ds,\\
    N^{n}(ds,b)
    &=C^{n}(X_s^{n},X_s^{n}+b)\,ds
\end{align*}
and, since $\process{X^n}$ is purely discontinuous, $A^n\equiv 0$.

In order to show the tightness of the family $\process{X^{n}}$, $n\in\nat$´, we need further conditions:
\begin{gather}
    \label{t3}\tag{\textbf{T3}} \forall \rho>0 \::\: \limsup_{n\to\infty}\sup_{a\in\ZZ_n^{d}}\sum_{|b|>\rho}C^{n}(a,a+b)<\infty;\\
    \label{t4}\tag{\textbf{T4}} \lim_{r\to\infty}\limsup_{n\to\infty}\sup_{a\in\ZZ_n^{d}}\sum_{|b|>r}C^{n}(a,a+b)=0;\\
    \label{t5}\tag{\textbf{T5}} \exists \rho>0\: \forall i=1,\dots,d\::\: \limsup_{n\to\infty}\sup_{a\in\ZZ_n^{d}}
                                    \bigg|\sum_{|b|<\rho}b_{i}C^{n}(a,a+b)\bigg|<\infty;\\
    \label{t6}\tag{\textbf{T6}} \exists \rho>0 \: \forall i,k=1,\dots,d\::\: \limsup_{n\to\infty}\sup_{a\in\ZZ_n^{d}}
                                    \bigg|\sum_{|b|<\rho}b_{i}b_{k}C^{n}(a,a+b)\bigg|<\infty.
\end{gather}

\begin{theorem}\label{tm1.2}
    Assume that \eqref{t1}--\eqref{t6} are satisfied. If the family of initial distributions $\mathbb{P}^n(X_0^n\in\bullet)$ is tight, then the family of Markov chains $\process{X^{n}}$, $n\in\nat$, is tight.
\end{theorem}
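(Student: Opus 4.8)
The plan is to apply the semimartingale tightness criterion described in the introduction (see \cite[Theorem VI.4.18]{Jacod-Shiryaev-2003}), which guarantees tightness of the laws of $\process{X^n}$ in the Skorokhod space once one knows: the tightness of the initial distributions; the $C$-tightness of the first characteristic $B^n(h)$ and of the modified second characteristic $\tilde A^n(h)$; and a uniform control of the large jumps expressed through the compensators $N^n$. Assumptions \eqref{t1}--\eqref{t2} already ensure that each $\mathbf{X}^n$ is a conservative, regular Markov chain, hence a semimartingale with the explicitly computed characteristics $B^n(h)$, $\tilde A^n(h)$, $N^n$ and with $A^n\equiv0$; the tightness of $\mathbb{P}^n(X_0^n\in\bullet)$ is assumed. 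Thus everything reduces to extracting the remaining three conditions from \eqref{t3}--\eqref{t6}.

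First I would fix the truncation function $h$ and pick $\rho>0$ small enough that $h(b)=b$ on $\{|b|<\rho\}$ and small enough to be admissible in \eqref{t5}--\eqref{t6}. Since $X^n_s\in\ZZ^d_n$ for every $s$, any bound of the form $\sup_{a\in\ZZ^d_n}|\,\cdot\,|$ is a pathwise bound on the integrands below. Writing $B^n(h)_t=\int_0^t\beta^n(X^n_s)\,ds$ with $\beta^n(a):=\sum_{b}h(b)C^n(a,a+b)$, I would split the inner sum at $|b|=\rho$: on $\{|b|<\rho\}$ the summand equals $b\,C^n(a,a+b)$, bounded uniformly in $a$ and eventually in $n$ by \eqref{t5}, while on $\{|b|\geq\rho\}$ one has $|h(b)|\leq\|h\|_\infty$, so that part is dominated by $\|h\|_\infty\sum_{|b|\geq\rho}C^n(a,a+b)$ and controlled by \eqref{t3}. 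Hence $\sup_a|\beta^n(a)|\leq K$ for all large $n$, and absorbing the finitely many remaining $n$ via \eqref{t2} makes $K$ uniform; thus $B^n(h)$ is $K$-Lipschitz in $t$ with $B^n(h)_0=0$, its modulus of continuity is deterministically bounded by $K\delta$, and the standard modulus-of-continuity criterion yields $C$-tightness. The same splitting applied to $\tilde A^n(h)_t^{ik}=\int_0^t\alpha^n_{ik}(X^n_s)\,ds$, where $\alpha^n_{ik}(a):=\sum_{b}h_i(b)h_k(b)C^n(a,a+b)$, now uses \eqref{t6} on $\{|b|<\rho\}$ (where $h_i(b)h_k(b)=b_ib_k$) and \eqref{t3} on $\{|b|\geq\rho\}$, giving the $C$-tightness of the modified second characteristic.

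It remains to verify the large-jump condition. For fixed $t,\varepsilon>0$, the compensator mass carried by the big jumps obeys the deterministic estimate
\[
    \int_0^t\sum_{|b|>r}C^n(X^n_s,X^n_s+b)\,ds\leq t\,\sup_{a\in\ZZ^d_n}\sum_{|b|>r}C^n(a,a+b),
\]
so \eqref{t4} forces $\lim_{r\to\infty}\limsup_{n\to\infty}$ of the right-hand side to vanish; for $r$ and $n$ large the bound drops below $\varepsilon$, whence the associated probability is zero. This is precisely the uniform negligibility of the large jumps required by the criterion, and combining it with the two $C$-tightness statements and the assumed tightness of the initial laws yields the tightness of $\process{X^n}$.

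I expect the main obstacle to be the precise invocation of \cite[Theorem VI.4.18]{Jacod-Shiryaev-2003}: matching its hypotheses --- stated for the \emph{modified} second characteristic, which is the reason $A^n\equiv0$ must be recorded --- to the splitting at $|b|=\rho$, choosing a single admissible $\rho$ for \eqref{t5}--\eqref{t6} (any discrepancy between the thresholds being absorbed by the finite-range tail bound \eqref{t3}), and upgrading the ``$\limsup_{n}\sup_{a}$'' form of \eqref{t3}--\eqref{t6} to uniform-in-$n$ bounds by treating the finitely many small indices $n$ separately via \eqref{t2}. Once this bookkeeping is in place, the analytic estimates themselves are routine.
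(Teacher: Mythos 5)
Your proposal follows essentially the same route as the paper: invoke the semimartingale tightness criterion of Jacod--Shiryaev (Theorem VI.4.18), dispose of the large-jump condition via \eqref{t4}, and establish tightness of $\process{B^{n}(h)}$ and $\process{\tilde{A}^{n}(h)}$ by splitting the sums at $|b|=\rho$ and using \eqref{t5}/\eqref{t6} for the small jumps and \eqref{t3} for the tails, exactly as in the paper's proof (which phrases the resulting Lipschitz-in-$t$ bounds through Theorem VI.3.21 rather than a modulus-of-continuity argument, and likewise leaves the third family $\int_0^t\sum_a g(a)N^n(ds,a)$ to an analogous estimate). The argument is correct and no substantive difference from the paper's proof remains.
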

\begin{proof}
We denote by  $\mathbb{P}^{n}$ the law of $\process{X^{n}}$ such that $\{\mathbb{P}^{n}(X_0^{n}\in\bullet)\}_{n\geq1}$ is tight. According to \cite[Theorem VI.4.18]{Jacod-Shiryaev-2003}, the family  $\process{X^{n}}$, $n\in\nat$, will be tight, if for all $T>0$ and all $\varepsilon>0$,
\begin{align}\label{eq1.2}
    \lim_{r\to\infty}\limsup_{n\to\infty}
    \mathbb{P}^{n}\left(N^{n}([0,T],\{a\in\ZZ_n^{d}:\, |a|>r\})>\varepsilon\right) = 0,
\end{align}
and the families of processes $\process{B^{n}(h)}$, $\process{\tilde{A}^{n}(h)}$ and $\left\{\int_0^{t}\sum_{a\in\ZZ_n^{d}}g(a)N^{n}(ds,a)\right\}_{t\geq0}$, $n\in\nat$, are tight for every bounded function $g:\real^{d}\to\real$ which vanishes in a neighbourhood of the origin.

Clearly, \eqref{eq1.2} is a direct consequence of \eqref{t4}, and it is enough to show the tightness of the families $\process{B^{n}(h)}$, $\process{\tilde{A}^{n}(h)}$ and $\left\{\int_0^{t}\sum_{a\in\ZZ_n^{d}}g(a)N^{n}(ds,a)\right\}_{t\geq0}$, $n\in\nat$.

According to \cite[Theorem VI.3.21]{Jacod-Shiryaev-2003} tightness of $\process{B^{n}(h)}$, $n\in\nat$, follows if we can show that
\begin{itemize}
\item[(i)]
    for every $T>0$  there exists some $r>0$ such that
    $$
        \lim_{n\to\infty}\mathbb{P}^{n}\bigg(\sup_{t\in[0,T]}|B^{n}(h)_t|>r\bigg)=0;
    $$
\item[(ii)]
    for every $T>0$ and $r>0$  there exists some $\tau>0$ such that
    $$
        \lim_{n\to\infty}\mathbb{P}^{n}\bigg(\sup_{u,v\in [0,T], |u-v|\leq\tau}|B^{n}(h)_u-B^{n}(h)_v|>r\bigg)=0.
    $$
\end{itemize}
Fix $T>0$; without loss of generality we may assume that $h(x)=x$ for all $x\in B_\rho(0)\subset \real^{d}$ where $\rho>0$ is given in \eqref{t5}. For $i=1,\dots,d$, we have
\begin{align*}
    &\sup_{t\in[0,T]}|B^{n}(h)_t^{i}|
    =\sup_{t\in[0,T]}\bigg|\int_0^{t}\sum_{b\in\ZZ_n^{d}}h_{i}(b)C^{n}(X_s^{n},X_s^{n}+b)\bigg|\\
    &\qquad\leq\sup_{t\in[0,T]}\int_0^{t}\bigg|\sum_{|b|<\rho}b_{i}C^{n}(X_s^{n},X_s^{n}+b)\bigg| \,ds
      + \sup_{t\in[0,T]}\int_0^{t}\sum_{|b|\geq\rho}|h_{i}(b)|C^{n}(X_s^{n},X_s^{n}+b)\,ds\\
    &\qquad\leq T\sup_{a\in\ZZ_n^{d}}\bigg|\sum_{|b|<\rho}b_{i}C^{n}(a,a+b)\bigg|
      + T\|h\|_{\infty}\sup_{a\in\ZZ_n^{d}}\sum_{|b|\geq\rho}C^{n}(a,a+b)
\end{align*}
and  for all $0\leq u\leq v\leq T$ such that $|u-v|\leq\tau$ we get
\begin{align*}
    \big|B^{n}(h)_u^{i}-B^{n}(h)_v^{i}\big|
    &= \bigg|\int_{u}^{v}\sum_{b\in\ZZ_n^{d}}h_{i}(b)C^{n}(X_s^{n},X_s^{n}+b)\,ds\bigg|\\
    &\leq \tau\sup_{a\in\ZZ_n^{d}}\bigg|\sum_{|b|<\rho}b_{i}C^{n}(a,a+b)\bigg|
      +\tau\|h\|_{\infty}\sup_{a\in\ZZ_n^{d}}\sum_{|b|\geq\rho}C^{n}(a,a+b).
\end{align*}
The assertion now follows from \eqref{t3} and \eqref{t5}.

Since the proof of tightness of the other two families is very similar, we omit the details; note that these proofs require the (not yet used) conditions \eqref{t4} and \eqref{t6}.
\end{proof}

\subsection{On a Class of Jump Processes and their Dirichlet Forms}\label{sec22}
In order to identify the (weak) limit of the family $\process{X^{n}}$, $n\in\nat$, we will use Dirichlet forms. We restrict  ourselves to a class of pure jump processes whose infinitesimal generators have the following form
$$
    \mathcal{A}f(x):=\lim_{\varepsilon\downto 0}\int_{B^{c}_\varepsilon(x)}(f(y)-f(x))k(x,y)\,dy,\quad f\in\mathcal{D}_{\mathcal{A}}\subseteq L^{2}(\real^{d},dx),
$$
where $k:\real^{d}\times\real^{d}\setminus\diag\to [0,\infty)$ is a Borel measurable function defined off the diagonal $\diag:=\{(x,x):x\in\real^{d}\}$. Observe that many interesting  processes fall into this class. For instance, (non-)symmetric L\'evy processes generated by a L\'evy measure of the form $\nu(dy)=\nu(y)dy$ (here, $k(x,y)=\nu(y-x)$). But this class goes beyond L\'evy processes; for example it  contains a process generated by $k(x,y)=|x-y|^{-\alpha(x)-d}$, where $\alpha:\rd\to(0,2)$,  the so-called stable-like processes (in the sense of R.F.\ Bass \cite{Bass-1988}), cf.\ Example \ref{e1.18}.

Denote by $k_s(x,y):=\frac{1}{2}(k(x,y)+k(y,x))$ and $k_a(x,y):=\frac{1}{2}(k(x,y)-k(y,x))$ the symmetric and antisymmetric parts of $k(x,y)$, respectively. It is well known, cf.\ \cite[Example 1.2.4]{Fukushima-Oshima-Takeda-Book-2011}, that the assumption
\begin{gather*}
    x\mapsto\int_{\real^{d}}(1\wedge|y-x|^{2})k_s(x,y)\,dy\in L_{\mathrm{loc}}^{1}(\real^{d},dx)
\end{gather*}
ensures that $k(x,y)$ defines a regular symmetric Dirichlet form $(\mathcal{E},\mathcal{F})$ on $L^{2}(\real^{d},dx)$, where
\begin{align*}
    \mathcal{E}(f,g)  &:=\int_{\real^{d}\times\real^{d}\setminus \diag}(f(y)-f(x))(g(y)-g(x))k(x,y)\,dx\,dy,\quad f,g\in\mathcal{\bar{F}},\\
    \mathcal{\bar{F}} &:=\{f\in L^{2}(\real^{d},dx):\mathcal{E}(f,f)<\infty\}.
\end{align*}
The form domain $\mathcal{F}$ is the $\mathcal{E}^{1/2}_1$-closure of the Lipschitz continuous functions with compact support $C_c^{\mathrm{Lip}}(\real^{d})$ 
and $\mathcal{E}_\alpha(f,f):=\mathcal{E}(f,f)+\alpha\|f\|^{2}_{L^{2}}$, $\alpha>0$,  induces  a norm on $\mathcal{\bar{F}}$. The fact that $\mathcal E(f,g)=\mathcal E(g,f)$ holds, allows us to replace $k(x,y)$ in the definition of the form with its symmetric part $k_s(x,y)$. This explains why there is no condition on $k_a(x,y)$.

In order to deal with the non-symmetric setting we need a slightly stronger assumption
\begin{equation}\label{c1.h}\tag{\textbf{C1}}\begin{gathered}
    x\mapsto\int_{\real^{d}}(1\wedge|y-x|^{2})k_s(x,y)\,dy\in L_{\mathrm{loc}}^{1}(\real^{d},dx)\\
    \alpha_0
    := \sup_{x\in\real^{d}}\int_{\{y\in\real^{d}:\,k_s(x,y)\neq0\}}\frac{k_a(x,y)^{2}}{k_s(x,y)}\,dy < \infty.
\end{gathered}\end{equation}
Under this assumption, see \cite{Fukushima-Uemura-2012} and \cite{Schilling-Wang-2015}, the non-symmetric bilinear form
$$
    H(f,g)
    := -\lim_{\varepsilon\downto 0}\int_{\real^{d}}\int_{B^{c}_\varepsilon(x)}(f(y)-f(x))k(x,y)\,dy\,g(x)\,dx,
    \quad f,g\in C_c^{\mathrm{Lip}}(\real^{d}),
$$
is well defined and it has the representation
$$
    H(f,g)
    = \frac{1}{2}\mathcal{E}(f,g)
      - \int_{\real^{d}\times\real^{d}\setminus \diag} (f(y)-f(x))g(y) k_a(x,y)\,dx\,dy,
      \quad f,g\in C_c^{\mathrm{Lip}}(\real^{d}).
$$
Moreover, $H$ has an extension onto $\mathcal{F}\times\mathcal{F}$ such that $(H,\mathcal{F})$ defines a regular lower bounded  semi-Dirichlet form on $L^{2}(\real^{d},dx)$ in the sense of \cite{Ma-Rockner-Book-1992}; in particular, there is a properly associated Hunt process $(\process{X}, \{\mathbb{P}^{x}\}_{x})$ which is defined up to an exceptional 
set. The proofs of \cite[Theorem 2.1]{Fukushima-Uemura-2012} and \cite[Proposition 2.1]{Schilling-Wang-2015} reveal that
\begin{gather}
\label{eq-comp1}
    \frac{1}{4}(1\wedge\alpha_0) \mathcal{E}_1(f,f)
    \leq H_{\alpha_0}(f,f)
    \leq \frac{2+\sqrt{2}}{2}(1\vee\alpha_0)\,\mathcal{E}_1(f,f),
    \quad f\in \mathcal{F},
\intertext{and}
\label{eq-comp2}
    (1\wedge\alpha_0)H_1(f,f)
    \leq H_{\alpha_0}(f)
    \leq (1\vee\alpha_0)\,H_1(f),
    \quad f\in \mathcal{F},
\end{gather}
where $H_\alpha(f):=H(f,f)+\alpha\|f\|^{2}_{L^{2}}$, $\alpha>0$. In particular, $\mathcal{F}$ is also the $H^{1/2}_1$-closure of $C_c^{\mathrm{Lip}}(\real^{d})$ in $\mathcal{\bar{F}}$. Note that $H=\mathcal{E}$ if $\alpha_0=0$.

A weaker version of the following result already appears in \cite[Theorem 2.4]{Schilling-Uemura-2012}, the main difference is that in \cite{Schilling-Uemura-2012} the shift-boundedness of $k_s(x,y)$ was needed.


\begin{proposition}\label{p1.3}
    Under \eqref{c1.h} the family $C_c^{\infty}(\real^{d})$ is dense in $\mathcal{F}$ with respect to $H^{1/2}_1$.
\end{proposition}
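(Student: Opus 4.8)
The plan is to reduce the statement to an approximation in the \emph{symmetric} energy norm and then to carry out an explicit mollification argument, controlling the energy of the error by the local integrability hypothesis in \eqref{c1.h}.

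First I would dispose of the two norms. By \eqref{eq-comp1} and \eqref{eq-comp2} the quadratic forms $H_1$, $H_{\alpha_0}$ and $\mathcal{E}_1$ are mutually comparable on $\mathcal{F}$ (in the degenerate case $\alpha_0=0$ one has $H=\mathcal{E}$ and the claim is the classical symmetric statement), so the norms $H_1^{1/2}$ and $\mathcal{E}_1^{1/2}$ induce the same topology on $\mathcal{F}$ and hence the same closure of any subset. It therefore suffices to show that $C_c^{\infty}(\real^d)$ is dense in $\mathcal{F}$ with respect to $\mathcal{E}_1^{1/2}$. Since $\mathcal{F}$ is by definition the $\mathcal{E}_1^{1/2}$-closure of $C_c^{\mathrm{Lip}}(\real^d)$ and since $C_c^{\infty}(\real^d)\subseteq C_c^{\mathrm{Lip}}(\real^d)\subseteq\mathcal{F}$, it is enough (by transitivity of density) to approximate an arbitrary fixed $f\in C_c^{\mathrm{Lip}}(\real^d)$ by functions in $C_c^{\infty}(\real^d)$ in the $\mathcal{E}_1^{1/2}$-norm.

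To this end I would choose a standard mollifier $\varphi_\delta$ and set $f_\delta := f*\varphi_\delta\in C_c^{\infty}(\real^d)$. For $\delta\leq 1$ the supports all lie in the fixed compact set $K:=\supp f+\overline{B_1(0)}$, the Lipschitz constants satisfy $\mathrm{Lip}(f_\delta)\leq\mathrm{Lip}(f)$, and $f_\delta\to f$ both uniformly and in $L^2$, so the $L^2$-part of $\mathcal{E}_1$ is immediate. The remaining task is to prove $\mathcal{E}(g_\delta,g_\delta)\to 0$ for $g_\delta:=f_\delta-f$, where I use that the form may be written via the symmetric part $k_s$, i.e.\ $\mathcal{E}(g_\delta,g_\delta)=\int_{\real^d\times\real^d\setminus\diag}(g_\delta(y)-g_\delta(x))^2 k_s(x,y)\,dx\,dy$.

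The heart of the argument, and the step I expect to be the main obstacle, is producing a single $\delta$-independent integrable dominating function so that dominated convergence applies. Here I would exploit that the $g_\delta$ are uniformly Lipschitz, uniformly bounded, and supported in $K$: for $x,y$ not both outside $K$ one has $(g_\delta(y)-g_\delta(x))^2\leq C\,(1\wedge|y-x|^2)$ with $C$ depending only on $\mathrm{Lip}(f)$ and $\|f\|_\infty$, and the integrand vanishes when $x,y\notin K$. Integrating and using the symmetry of $k_s$ gives the bound by $2C\int_K\int_{\real^d}(1\wedge|y-x|^2)k_s(x,y)\,dy\,dx$, which is finite because the inner integral is in $L^1_{\mathrm{loc}}$ by \eqref{c1.h} and $K$ is compact. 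Since $g_\delta\to 0$ uniformly, the integrand tends to $0$ pointwise, so dominated convergence yields $\mathcal{E}(g_\delta,g_\delta)\to 0$ and hence $f_\delta\to f$ in $\mathcal{E}_1^{1/2}$, completing the proof.
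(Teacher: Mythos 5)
Your proof is correct and follows essentially the same route as the paper: reduce to $\mathcal{E}_1^{1/2}$-density of $C_c^\infty(\rd)$ in $C_c^{\mathrm{Lip}}(\rd)$ via the comparability estimates \eqref{eq-comp1} and \eqref{eq-comp2}, mollify a fixed $f\in C_c^{\mathrm{Lip}}(\rd)$, and conclude by dominated convergence using the first condition in \eqref{c1.h}. The only cosmetic difference is the dominating function: you bound $(g_\delta(y)-g_\delta(x))^2$ by $C\,(1\wedge|x-y|^2)$ off $K^c\times K^c$, whereas the paper splits $\rd\times\rd$ into $B_{2R}(0)\times B_{2R}(0)$ and the two off-diagonal regions --- both reduce to the same local integrability statement from \eqref{c1.h}.
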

\begin{proof}
    Clearly, it suffices to prove that $C_c^{\infty}(\real^{d})$ is $\mathcal{E}^{1/2}_1$-dense in $C_c^{\mathrm{Lip}}(\real^{d})$.

    Let $\chi\in C_c^{\infty}(\real^{d})$ satisfying $0\leq\chi(x)\leq1$ for all $x\in \real^{d}$, $\supp\chi\subseteq \bar{B}_1(0)$ and $\int_{\real^{d}}\chi(x)\,dx=1$. For $\varepsilon>0$ set
    $$
        \chi_\varepsilon(x):=\varepsilon^{-d}\chi(x/\varepsilon),\quad x\in\real^{d}.
    $$
    By definition, $\chi_\varepsilon\in C_c^{\infty}(\real^{d})$, $\supp \chi_\varepsilon\subseteq \bar{B}_\varepsilon(0)$ and $\int_{\real^{d}}\chi_\varepsilon(x)\,dx=1$. The Friedrichs mollifier $J_{1/n}$ of $g\in C_c^{\mathrm{Lip}}(\real^{d})$ is defined as
    $$
        J_{1/n}g(x)
        =\chi_{1/n}\ast g(x)
        =n^{d}\int_{\real^{d}}\chi_{1/n}(x-y)g(y)\,dy
        =\int_{\bar{B}_1(0)}\chi(y)g(x-y/n)\,dy,
    $$
    $x\in\real^{d}$; for brevity, we use $g_n := J_{1/n}g$. It is easy to see that $\{g_n\}_{n\geq1}$ converges uniformly to $g(x)$, $\|g_n\|_\infty\leq\|g\|_\infty$ and $\supp g_n\subseteq \supp g + \bar{B}_1(0)$ for all $n\in\nat$; in particular, $\{g_n\}_{n\geq1}$ converges to $g(x)$ in $L^{2}(\real^{d},dx)$. Hence, it remains to prove that
    $$
        \lim_{n\to\infty}\mathcal{E}(g_n-g,g_n-g)=0.
    $$
    First, observe that for all $n\in\nat$ and $x,y\in\real^{d}$
    $$
        |g_n(x)-g_n(y)|
        \leq\int_{\bar{B}_1(0)}\chi(z)|g(x-z/n)-g(y-z/n)|\,dz
        \leq L_g|x-y|,
    $$
    where $L_g>0$ is the Lipschitz constant of $g(x)$. Pick $R>0$ such that $\supp g_n\subseteq \bar{B}_R(0)$ for all $n\in\nat$. Then, we have that
    \begin{align*}
        &(g_n(y)-g_n(x))^{2}\I_{\real^{d}\times\real^{d}\setminus\diag}(x,y)\\
        &=(g_n(y)-g_n(x))^{2}\left(\I_{B_{2R}(0)\times B^{c}_{2R}(0)} + \I_{B^{c}_{2R}(0)\times B_{2R}(0)} + \I_{B_{2R}(0)\times B_{2R}(0)\setminus\diag}\right)(x,y)\\
        &\leq g_n^2(x)\I_{B_{2R}(0)\times B_{2R}^c(0)}(x,y)
        + g_n^2(y)\I_{B_{2R}^c(0)\times B_{2R}(0)}(x,y)
        + L_g^2 |x-y|^2 \I_{B_{2R}(0)\times B_{2R}(0)}(x,y)\\
        &\leq \|g\|_\infty^2\I_{B_{2R}(0)\times B_{2R}^c(0)}(x,y)
        + \|g\|_\infty^2\I_{B_{2R}^c(0)\times B_{2R}(0)}(x,y)
        + L_g^2 |x-y|^2 \I_{B_{2R}(0)\times B_{2R}(0)}(x,y).
    \end{align*}
    Since
    \begin{align*}
        \mathcal{E}(g_n-g,g_n-g)
        &=\int_{\real^{d}\times\real^{d}\setminus\diag}(g_n(y)-g(y)-g_n(x)+g(x))^{2}k_s(x,y)\,dx\,dy\\
        &\leq\int_{\real^{d}\times\real^{d}\setminus\diag}\left(2(g_n(y)-g_n(x))^{2}+2(g(y)-g(x))^{2}\right)k_s(x,y)\,dx\,dy,
    \end{align*}
    the assertion follows directly from \eqref{c1.h} and the dominated convergence theorem.
\end{proof}

Let us now describe the Dirichlet form related to the Markov chains $\process{X^{n}}$, $n\in\nat$, introduced in the previous Section~\ref{sec21}. first, recall that for $n\in\nat$, we denote by $L^{2}_n$ the standard Hilbert space on $\ZZ_n^{d}$ with scalar product
$$
    \langle f,g\rangle_{L^2_n}
    :=n^{-d}\sum_{a\in\ZZ_n^{d}}f(a)g(a),
    \quad f,g\in L^{2}_n.
$$
The following result is a direct consequence of \cite{Fukushima-Uemura-2012} and \cite{Schilling-Wang-2015}.
\begin{proposition}\label{p1.4}
    Assume that $C^{n}$, $n\in\nat$, satisfy \eqref{t1}, \eqref{t2} and \eqref{c1.h}\footnote{\eqref{c1.h} is assumed to hold for each chain $\process{X^{n}}$ by replacing the kernel $k(x,y)$ by $C^{n}(a,b)$}. For every $n\in\nat$ we define the bilinear forms
	\begin{align*}
        \mathcal{E}^{n}(f,g)
        &:=n^{-d}\sum_{a\in\ZZ_n^{d}}\sum_{b\in\ZZ_n^{d}}(f(b)-f(a))(g(b)-g(a))C_s^{n}(a,b)\\
        H^{n}(f,g)
        &:=\frac{1}{2}\mathcal{E}^{n}(f,g)-n^{-d}\sum_{a\in\ZZ_n^{d}}\sum_{b\in\ZZ_n^{d}}(f(b)-f(a))g(b)C^{n}_a(a,b).
    \end{align*}
    where $C^{n}_s(a,b):=\frac{1}{2}(C^{n}(a,b)+C^{n}(b,a))$, resp., $C^{n}_a(a,b):=\frac{1}{2}(C^{n}(a,b)-C^{n}(b,a))$ are the symmetric and antisymmetric parts of $C^{n}(a,b)$.
	\begin{itemize}
	\item[\textup{(i)}]
        $H^n(f,g)$ is a well defined non-symmetric bilinear form on $\mathcal{F}^{n}:=\{f\in L^{2}_n: \mathcal{E}^{n}(f,f)<\infty\}$.
	\item[\textup{(ii)}] $(H^{n},\mathcal{F}^{n})$ is  a regular lower bounded  semi-Dirichlet form \textup{(}in the sense of \cite{Ma-Rockner-Book-1992}\textup{)};
	\item[\textup{(iii)}]
    $H^{n}(f,g)=\langle-\mathcal{A}^{n}f,g\rangle_{L^2_n}$
    for all $f\in L_n^2$ and $g\in\mathcal F^n$.
    In particular, the associated Hunt process is $\process{X^{n}}$. Recall that
    $$
    \mathcal{A}^{n}f(a)
    =\sum_{b\in\ZZ_n^{d}}(f(b)-f(a))C^{n}(a,b)\quad \text{and}\quad L_n^2\subseteq\mathcal{D}_{\mathcal{A}^{n}}.
    $$
    \item[\textup{(iv)}]
        The estimates \eqref{eq-comp1} and \eqref{eq-comp2} hold for $\mathcal E=\mathcal E^n$, $H=H^n$ and
		$$
            \alpha_0 = \alpha^{n}_0:=\sup_{a\in\ZZ^{d}_n}\sum_{\substack{b\in\ZZ_n^{d}\\C^{n}_s(a,b)\neq0}}\frac{C^{n}_a(a,b)^{2}}{C^{n}_s(a,b)}.
        $$

	\end{itemize}
\end{proposition}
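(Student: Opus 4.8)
The plan is to treat $(\mathcal{E}^n,H^n,\mathcal{F}^n)$ as a discrete instance of the continuous theory of \cite{Fukushima-Uemura-2012} and \cite{Schilling-Wang-2015}, realised over the measure space $\ZZ^d_n$ equipped with the normalised counting measure $\mu_n:=n^{-d}\sum_{a\in\ZZ^d_n}\delta_a$ and the jump kernel $C^n(a,b)$ in place of $k(x,y)$; every double integral $\int\!\int\cdots\,dx\,dy$ is then replaced by $n^{-d}\sum_{a,b}$, and the cited arguments go through verbatim. First I would observe that the discrete form of the first line of \eqref{c1.h}, the finiteness of $\sum_{b}(1\wedge|b-a|^2)C^n_s(a,b)$ for each $a$, amounts to $\sum_{|b-a|>1}C^n_s(a,b)<\infty$, since bounded subsets of $\ZZ^d_n$ are finite and the near-diagonal part $\sum_{|b-a|\leq1}|b-a|^2C^n_s(a,b)$ is then a finite sum. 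In particular every finitely supported function lies in $\mathcal{F}^n$, and the regularity of the symmetric Dirichlet form $(\mathcal{E}^n,\mathcal{F}^n)$—the discrete counterpart of \cite[Example 1.2.4]{Fukushima-Oshima-Takeda-Book-2011}—makes these functions an $\mathcal{E}^n_1$-dense core, settling the regularity needed in (ii).

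The heart of the matter is a single estimate bounding the antisymmetric part of the kernel by its symmetric part. Writing $C^n_a(a,b)=\bigl(C^n_a(a,b)/\sqrt{C^n_s(a,b)}\bigr)\sqrt{C^n_s(a,b)}$ on $\{C^n_s\neq0\}$ (note that $C^n_s(a,b)=0$ forces $C^n(a,b)=C^n(b,a)=0$, hence $C^n_a(a,b)=0$) and applying the Cauchy--Schwarz inequality to the double sum, one gets
\begin{align*}
    \Bigl|n^{-d}\sum_{a,b}(f(b)-f(a))g(b)C^n_a(a,b)\Bigr|
    &\leq \mathcal{E}^n(f,f)^{1/2}\Bigl(n^{-d}\sum_{b}g(b)^2\sum_{a}\frac{C^n_a(a,b)^2}{C^n_s(a,b)}\Bigr)^{1/2}\\
    &\leq (\alpha^n_0)^{1/2}\,\mathcal{E}^n(f,f)^{1/2}\,\|g\|_{L^2_n},
\end{align*}
where the last step uses $C^n_s(a,b)=C^n_s(b,a)$, $C^n_a(a,b)^2=C^n_a(b,a)^2$ and the definition of $\alpha^n_0$. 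Combined with $|\mathcal{E}^n(f,g)|\leq\mathcal{E}^n(f,f)^{1/2}\mathcal{E}^n(g,g)^{1/2}$, this shows that $H^n$ is a well defined bilinear form on $\mathcal{F}^n$, continuous with respect to $\mathcal{E}^n_1$, which is (i). This is precisely the inequality driving \cite[Theorem 2.1]{Fukushima-Uemura-2012} and \cite[Proposition 2.1]{Schilling-Wang-2015}, so reusing those computations with $n^{-d}\sum_{a,b}$ in place of the double integral yields the two-sided bounds \eqref{eq-comp1} and \eqref{eq-comp2} with $\alpha_0=\alpha^n_0$, which is (iv).

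For (iii) I would compute directly. Splitting $C^n=C^n_s+C^n_a$ in
$$
    \langle-\mathcal{A}^nf,g\rangle_{L^2_n}=-n^{-d}\sum_{a,b}(f(b)-f(a))g(a)C^n(a,b)
$$
and symmetrising the $C^n_s$-sum through the substitution $a\leftrightarrow b$ gives $\tfrac12\mathcal{E}^n(f,g)$; the same substitution applied to the $C^n_a$-sum, together with $C^n_a(b,a)=-C^n_a(a,b)$, turns $g(a)$ into $g(b)$ and recovers the antisymmetric term of $H^n$. Hence $\langle-\mathcal{A}^nf,g\rangle_{L^2_n}=H^n(f,g)$ for $f\in L^2_n$ and $g\in\mathcal{F}^n$, all rearrangements being justified by absolute convergence, which follows from \eqref{t2}, the discrete \eqref{c1.h} and $\alpha^n_0<\infty$ as above. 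Since $\mathcal{A}^n$ generates $\process{X^n}$, the Hunt process properly associated with $(H^n,\mathcal{F}^n)$ must coincide with $\process{X^n}$.

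The remaining claim (ii) is where I would take most care, even though it transfers most readily. Closedness of $(H^n,\mathcal{F}^n)$ follows from that of the symmetric Dirichlet form $(\mathcal{E}^n,\mathcal{F}^n)$ through the comparability \eqref{eq-comp1}; regularity was dealt with above; and lower boundedness together with the weak sector condition is immediate from the key estimate and \eqref{eq-comp2}. The one genuinely form-theoretic point, and the step I expect to be the main obstacle, is the contraction property in the \emph{semi}-Dirichlet sense of \cite{Ma-Rockner-Book-1992}: one must verify that the unit contraction operates, i.e.\ that $u\mapsto(0\vee u)\wedge1$ acts on $H^n$ in the appropriate one-sided way. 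This is checked on the generator side from $C^n\geq0$ and \eqref{t1}, exactly as in \cite{Fukushima-Uemura-2012,Schilling-Wang-2015}, and it is the only place where the non-negativity of the conductances—rather than merely the finiteness encoded in $\alpha^n_0$—enters.
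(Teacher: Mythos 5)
Your proposal is correct and follows essentially the same route as the paper, which gives no written proof but declares the proposition ``a direct consequence of'' \cite{Fukushima-Uemura-2012} and \cite{Schilling-Wang-2015}; you are simply spelling out how those continuous-state arguments transfer to $\ZZ^d_n$ with the normalised counting measure, and your key Cauchy--Schwarz bound on the antisymmetric part, the symmetrisation computation for (iii), and the appeal to the cited papers for the semi-Dirichlet contraction property are exactly the steps that citation is standing in for.
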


\begin{corollary}\label{c1.5}
    Assume that $C^{n}$, $n\in\nat$, satisfy \eqref{t1}, \eqref{t2} as well as
    \begin{gather}
    \label{t2*}\tag{\textbf{T$\mathbf{2^{*}}$}}
    \sup_{b\in\ZZ_n^{d}}\sum_{a\in\ZZ_n^{d}}C^{n}(a,b)<\infty.
    \end{gather}
    Then $\mathcal{F}^{n}=L^{2}_n$ for every $n\in\nat$.
\end{corollary}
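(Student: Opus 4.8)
The plan is to prove the non-trivial inclusion $L^2_n\subseteq\mathcal F^n$; the reverse inclusion $\mathcal F^n\subseteq L^2_n$ holds by the very definition of $\mathcal F^n$. Hence it suffices to check that $\mathcal E^n(f,f)<\infty$ for every $f\in L^2_n$, and in fact I would aim for the quantitative estimate $\mathcal E^n(f,f)\leq 4M^n\|f\|_{L^2_n}^2$, where $M^n<\infty$ is a constant depending only on $n$.

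First I would expand $\mathcal E^n(f,f)=n^{-d}\sum_{a,b\in\ZZ_n^d}(f(b)-f(a))^2 C^n_s(a,b)$ and apply the elementary bound $(f(b)-f(a))^2\leq 2f(a)^2+2f(b)^2$. This splits the Dirichlet form into two double sums, one weighted by $f(a)^2$ and the other by $f(b)^2$.

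The key step is to control the inner sum $\sum_{b\in\ZZ_n^d}C^n_s(a,b)$ uniformly in $a$. Writing $C^n_s(a,b)=\tfrac12\bigl(C^n(a,b)+C^n(b,a)\bigr)$, the first half is dominated by $\sup_a\sum_b C^n(a,b)$, which is finite by \eqref{t2}, while the second half is dominated by $\sup_a\sum_b C^n(b,a)=\sup_b\sum_a C^n(a,b)$, which is finite by \eqref{t2*}. Thus $\sup_a\sum_b C^n_s(a,b)\leq M^n<\infty$. This is precisely the point where both hypotheses enter in an essential way: \eqref{t2} alone bounds only the row sums of $C^n$, but symmetrisation forces the column sums into play, and these are exactly what \eqref{t2*} supplies. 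I regard this symmetric interplay of \eqref{t2} and \eqref{t2*} as the only real content of the argument.

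Finally, in the $f(a)^2$-sum I would factor out $f(a)^2$ and invoke the uniform bound just obtained to get a contribution $\leq 2M^n\|f\|_{L^2_n}^2$; in the $f(b)^2$-sum I would use the symmetry $C^n_s(a,b)=C^n_s(b,a)$ to interchange the roles of $a$ and $b$, so that $\sum_a C^n_s(a,b)\leq M^n$ as well, yielding the same estimate. Adding the two contributions gives $\mathcal E^n(f,f)\leq 4M^n\|f\|_{L^2_n}^2<\infty$, whence $f\in\mathcal F^n$ and $L^2_n\subseteq\mathcal F^n$, completing the proof. (Note that \eqref{t1} plays no active role here, since the diagonal terms $a=b$ already vanish through the factor $f(b)-f(a)$.)
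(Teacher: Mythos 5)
Your argument is correct and is essentially the computation the paper compresses into its one-line proof: the paper invokes Jensen's and H\"{o}lder's inequalities where you use $(f(b)-f(a))^2\leq 2f(a)^2+2f(b)^2$, but in both cases the substance is the Schur-type observation that \eqref{t2} bounds the row sums and \eqref{t2*} the column sums of $C^n$, so that $\sup_a\sum_b C^n_s(a,b)<\infty$ and hence $\mathcal E^n(f,f)\lesssim\|f\|_{L^2_n}^2$. The only point the paper makes that you omit is that \eqref{t2} and \eqref{t2*} also imply \eqref{c1.h}, which is needed so that the framework of Proposition~\ref{p1.4} (and thus the object $\mathcal F^n$ as the domain of a well-defined semi-Dirichlet form) applies, but this does not affect the validity of your estimate.
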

\begin{proof}
	Clearly, \eqref{t2} and \eqref{t2*} imply \eqref{c1.h}. The claim follows from Jensen's and H\"older's inequalities.
\end{proof}

In order to study the convergence of the forms $H^n$ as $n\to\infty$ we need a few further notions. Denote by $\bar{a}:=[a_{1}-1/2n,a_{1}+1/2n)\times\cdots\times[a_{d}-1/2n,a_{d}+1/2n)$ the half-open cube with centre $a=(a_{1},\dots,a_{d})\in\mathbb{Z}_n^{d}$ and side-length $n^{-1}$, and for $x=(x_{1},\dots,x_{d})\in\real^{d}$ we set
$$
    [x]_n:=\left(\left[nx_1+1/2\right]/n,\dots,\left[nx_d+1/2\right]/n\right),
$$
where $[u]$ is the integer part of $u\in\real$. Note that for $a\in\ZZ_n^{d}$ and $x\in\bar{a}$ we have $[x]_n=a$.

\begin{figure}[H]
	\begin{center}
		\begin{tikzpicture}
		
		\draw[step=2cm,gray, thin] (-1,-1) grid (3,3);
		\filldraw[fill=black!15, draw=gray] (0,0) rectangle (2,2);
		\foreach \Point/\PointLabel in {(1,1)/}
		\draw[fill=black] \Point circle (0.03) node[above right] {$\PointLabel$};
		\draw[black,very thin] (1,1) circle (1.41cm);
		\draw[black,thin] (0,0) -- (1,1) node [midway, above, sloped] (TextNode) {\tiny$\sqrt{d}/2n$};
		\node at (0.5,1.6) (nodeA) {\Large $\bar{a}$};
		\node[] at (1,1.2) {\tiny$a\in\ZZ_n^{d}$};
		\draw[black,thin] (1,1) -- (1,0);
		\node[] at (1.35,0.5) {\tiny$1/2n$};
		\end{tikzpicture}
		\caption{Definition of the discretization}
	\end{center}
\end{figure}
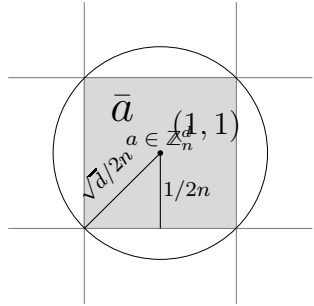

By $r_n:L^{2}(\real^{d},dx)\to L^{2}_n$ and $e_n:L^{2}_n\to L^{2}(\real^{d},dx)$ we denote the restriction and extension operators which are defined by
\begin{align*}
    r_nf(a) &= n^{d}\int_{\bar{a}}f(x)dx,\quad a\in\ZZ_n^{d}\\
    e_nf(x) &= f(a),\quad x\in\bar{a}.
\end{align*}
These operators have the following properties: for $f\in L^{2}(\real^{d},dx)$ and $f_n\in L^{2}_n$, $n\in\nat$:
\begin{itemize}
\item [(i)]

    $\displaystyle\sup_{n\in\nat}\|r_n\|_{L^2_n}\leq \|f\|_{L^2}$,
    $\displaystyle\lim_{n\to\infty}\|r_nf\|_{L^{2}_n}=\|f\|_{L^{2}}$ and
    $\|e_n f_n\|^{2}_{L^{2}}=\|e_n f_n^{2}\|_{L^{1}}=\|f_n\|_{L_n^{2}}^{2}$;
\item[(ii)]
    $r_n e_n f_n=f_n$ and $\langle r_n f,f_n\rangle_{L^2_n}=\langle f,e_nf_n\rangle_{L^2}$;
\item[(iii)]
    $\displaystyle\lim_{n\to\infty}\|e_nr_nf-f\|_{L^{2}}=0$;
\end{itemize}
see \cite[Lemma 4.1]{Chen-Kim-Kumagai-2013}. Let us recall  from \cite{Kuwae-Shioya-2003}  the notions of strong and weak convergence. Let $\mathcal{C}\subseteq L^{2}(\real^{d},dx)$ be dense in $(L^{2}(\real^{d},dx),\|\cdot\|_{L^{2}})$. A sequence $f_n\in L^{2}_n$, $n\in\nat$, \emph{converges strongly} to $f\in L^{2}(\real^{d},dx)$ if for every $\{g_m\}_{m\geq1}\subseteq \mathcal{C}$ satisfying
$$
    \lim_{m\to\infty}\|g_m-f\|_{L^{2}}=0,
$$
we have that
$$
    \lim_{m\to\infty}\limsup_{n\to\infty}\|r_ng_m-f_n\|_{L_n^{2}}=0.
$$
The sequence $f_n\in L^{2}_n$, $n\in\nat$, \emph{converges weakly} to $f\in L^{2}(\real^{d},dx)$, if
$$
    \lim_{n\to\infty}\langle f_n,g_n\rangle_{ L^2_n}=\langle f,g\rangle_{ L^2}
$$
for every sequence $\{g_n\}_{n\geq1}$, $g_n\in L^{2}_n$, converging strongly to $g\in L^{2}(\real^{d},dx)$.

In the following lemma we give an equivalent characterization of the strong and weak convergence, which simplifies the use of these types of convergence.
\begin{lemma}\label{l1.6}
\begin{itemize}
\item[\textup{(i)}]
    A  sequence $\{f_n\}_{n\geq1}$, $f_n\in L^{2}_n$, converges strongly to $f\in L^{2}(\real^{d},dx)$ if, and only if,
    $$
        \lim_{n\to\infty}\|e_nf_n-f\|_{L^{2}}=0.
    $$
\item [\textup{(ii)}]
    A  sequence $\{f_n\}_{n\geq1}$, $f_n\in L^{2}_n$, converges weakly to $f\in L^{2}(\real^{d},dx)$ if, and only if,
    $$
        \lim_{n\to\infty}\langle e_nf_n,g\rangle=\langle f,g\rangle,\quad g\in L^{2}(\real^{d},dx).
    $$
\end{itemize}
\end{lemma}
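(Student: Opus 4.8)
The plan is to exploit the fact that the extension operator $e_n$ is an \emph{isometric embedding} of $L^2_n$ into $L^{2}(\real^{d},dx)$ whose range consists of the functions that are constant on each cube $\bar a$. The norm identity $\|e_nf_n\|_{L^2}=\|f_n\|_{L^2_n}$ from property~(i) gives, by linearity, $\|e_nf_n-e_ng_n\|_{L^2}=\|f_n-g_n\|_{L^2_n}$. Combining $r_ne_ng_n=g_n$ with the duality relation $\langle r_nf,f_n\rangle_{L^2_n}=\langle f,e_nf_n\rangle_{L^2}$ from property~(ii) (take $f=e_ng_n$) yields the inner-product identity $\langle e_nf_n,e_ng_n\rangle_{L^2}=\langle f_n,g_n\rangle_{L^2_n}$. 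Finally, property~(iii), namely $\|e_nr_ng-g\|_{L^2}\to0$, says precisely that $r_ng$ converges strongly to $g$ for every $g\in L^{2}(\real^{d},dx)$. These three observations transport both convergence notions into statements about the single fixed space $L^{2}(\real^{d},dx)$.

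For part~(i), the \textbf{if} direction goes as follows: assuming $\|e_nf_n-f\|_{L^2}\to0$, take any $\{g_m\}\subseteq\mathcal{C}$ with $\|g_m-f\|_{L^2}\to0$ and use the isometry to write $\|r_ng_m-f_n\|_{L^2_n}=\|e_nr_ng_m-e_nf_n\|_{L^2}$, which the triangle inequality bounds by $\|e_nr_ng_m-g_m\|_{L^2}+\|g_m-f\|_{L^2}+\|f-e_nf_n\|_{L^2}$. Taking $\limsup_{n\to\infty}$ kills the first and third terms (by~(iii) and the assumption), leaving $\|g_m-f\|_{L^2}$, which vanishes as $m\to\infty$; hence the iterated limit is zero. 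For the \textbf{only if} direction, density of $\mathcal{C}$ furnishes one sequence $g_m\to f$ in $\mathcal{C}$, and the symmetric estimate $\|e_nf_n-f\|_{L^2}\le\|f_n-r_ng_m\|_{L^2_n}+\|e_nr_ng_m-g_m\|_{L^2}+\|g_m-f\|_{L^2}$ lets me take $\limsup_{n\to\infty}$ (middle term dies by~(iii)) and then $m\to\infty$, where the first term dies by the definition of strong convergence and the last by the choice of $g_m$.

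For part~(ii), the \textbf{only if} direction is immediate: given $g\in L^{2}(\real^{d},dx)$, set $g_n:=r_ng$, which converges strongly to $g$ by~(iii)/part~(i); the definition of weak convergence then gives $\langle f_n,r_ng\rangle_{L^2_n}\to\langle f,g\rangle_{L^2}$, and property~(ii) identifies $\langle f_n,r_ng\rangle_{L^2_n}=\langle e_nf_n,g\rangle_{L^2}$. The \textbf{if} direction requires the one genuinely non-formal input. The hypothesis says exactly that $e_nf_n\rightharpoonup f$ weakly in the Hilbert space $L^{2}(\real^{d},dx)$; by the uniform boundedness principle a weakly convergent sequence is norm-bounded, so $K:=\sup_{n}\|f_n\|_{L^2_n}=\sup_{n}\|e_nf_n\|_{L^2}<\infty$. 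Then, for any $g_n\to g$ strongly, the splitting $\langle f_n,g_n\rangle_{L^2_n}=\langle e_nf_n,e_ng_n-g\rangle_{L^2}+\langle e_nf_n,g\rangle_{L^2}$ controls the first term by $K\,\|e_ng_n-g\|_{L^2}\to0$ and the second by the hypothesis, giving $\langle f_n,g_n\rangle_{L^2_n}\to\langle f,g\rangle_{L^2}$.

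I expect the only real obstacle to be this last point: establishing the uniform bound $\sup_n\|f_n\|_{L^2_n}<\infty$ in the \textbf{if} direction of~(ii), since without it the cross term $\langle e_nf_n,e_ng_n-g\rangle_{L^2}$ cannot be tamed. Everything else reduces to the isometry identities for $e_n$ and routine triangle-inequality estimates, so the Banach--Steinhaus step is the substantive ingredient worth spelling out.
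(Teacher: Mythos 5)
Your proof is correct and follows essentially the same route as the paper's: the same triangle-inequality manipulations built on the identities $r_ne_nf_n=f_n$, $\|e_nf_n\|_{L^2}=\|f_n\|_{L^2_n}$ and $\|e_nr_nf-f\|_{L^2}\to0$ in part (i), and the same splitting of $\langle f_n,g_n\rangle_{L^2_n}$ into $\langle e_nf_n,e_ng_n-g\rangle_{L^2}+\langle e_nf_n,g\rangle_{L^2}$ in part (ii). Your explicit appeal to Banach--Steinhaus to obtain $\sup_n\|e_nf_n\|_{L^2}<\infty$ is in fact a cleaner justification of the uniform bound than the paper's rather terse remark at that step.
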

\begin{proof}
(i)
Let $f_n\in L^{2}_n$, $n\in\nat$, and $f\in L^{2}(\real^{d},dx)$. Assume that $\{f_n\}_{n\geq1}$ converges strongly  to $f$, and let $\{g_m\}_{m\geq1}\subseteq \mathcal{C}$ be an approximating sequence of $f$ satisfying the conditions from the definition of strong convergence. Then, by the properties of the operators $r_n$ and $e_n$, $n\in\nat$, we find
\begin{align*}
    \|e_nf_n-f\|_{L^{2}}
    &\leq\|e_nf_n-e_nr_nf\|_{L^{2}}+\|e_nr_nf-f\|_{L^{2}}\\
    &= \|f_n-r_nf\|_{L^{2}_n}+\|e_nr_nf-f\|_{L^{2}}\\
    &\leq \|f_n-r_ng_m\|_{L^{2}_n}+\|r_ng_m-r_nf\|_{L^{2}_n}+\|e_nr_nf-f\|_{L^{2}}\\
    &\leq \|f_n-r_ng_m\|_{L^{2}_n}+\|g_m-f\|_{L^{2}}+\|e_nr_nf-f\|_{L^{2}}.
\end{align*}
Letting first $n\to\infty$ and then $m\to\infty$, the necessity of the claim follows. For the `if' part, we proceed as follows.  Let $\{g_m\}_{m\geq1}\subseteq \mathcal{C}$ be any approximating sequence of $f$. Using the strong convergence $e_nf_n \to f$ and the fact that $r_ne_nf_n = f_n$, we see
\begin{align*}
    \|r_ng_m - f_n\|_{L^2_n}
    = \|r_ng_m - r_ne_n f_n\|_{L^2_n}
    &\leq \|g_m - e_nf_n\|_{L^2}\\
    &\leq \|g_m - f\|_{L^2} + \|f - e_nf_n\|_{L^2}.
\end{align*}
Letting first $n\to\infty$ and then $m\to\infty$ proves the assertion.

\medskip

(ii)
Let $f_n\in L^{2}_n$, $n\in\nat$, and $f\in L^{2}(\real^{d},dx)$. Assume that $\{f_n\}_{n\geq1}$ converges weakly to $f$. Since for every $g\in L^2(\real^{d})$ the sequence $\{r_ng\}_{n\geq1}$ converges strongly to $g$, it follows  immediately that
$$
    \lim_{n\to\infty}\langle e_nf_n,g\rangle_{L^2}
    = \langle f,g\rangle_{L^2},
    \quad g\in L^{2}(\real^{d},dx).
$$

For the sufficiency part we pick $g\in L^{2}(\real^{d},dx)$ and any sequence $\{g_n\}_{n\geq1}$, $g_n\in L^{2}_n$ such that $\{g_n\}_{n\geq1}$ converges strongly to $g$. By our assumption,
$$
    \lim_{n\to\infty}\langle f_n,r_ng\rangle_{L^2_n}
    = \lim_{n\to\infty}\langle e_nf_n,g\rangle_{L^2_n}
    = \langle f,g\rangle_{L^2};
$$
this shows, in particular, that $\langle e_nf_n, g\rangle_{L^2}\leq \|f\|_{L^2}$ for all $g$ with $\|g\|_{L^2}=1$, and so $\sup_{n\in\nat}\|e_nf_n\|_{L^2}<\infty$.
In order to prove the claim we have to show that
$$
    \lim_{n\to\infty}\langle f_n,g_n-r_ng\rangle_{L^2_n}=0.
$$
Using the properties of the operators $r_n$ and $e_n$ along with the Cauchy-Schwarz inequality yields
\begin{align*}|
    \langle f_n,g_n-r_ng\rangle_{L^2_n}|
    =|\langle f_n,r_ne_ng_n-r_ng\rangle_{L^2_n}|
    &=|\langle e_nf_n,e_ng_n-g\rangle_{L^2}|\\
    &\leq\|e_nf_n\|_{L^{2}}\|e_ng_n-g\|_{L^{2}},
\end{align*}
proving the assertion.
\end{proof}
For further details on strong and weak convergence we refer to \cite{Kuwae-Shioya-2003} and \cite{Tolle-Thesis-2006}.

\subsection{Convergence of the Finite-Dimensional Distributions}\label{sec23}

We can now combine the relative compactness from Section~\ref{sec21} and the convergence results from Section~\ref{sec22} to show the convergence of the  finite-dimensional distributions of the chains $\process{X^{n}}$, $n\in\nat$, to those of a non-symmetric pure jump process  $\process{X}$. The latter will be determined by a kernel $k:\real^{d}\times\real^{d}\setminus \diag\to\real$ satisfying \eqref{c1.h}.

\begin{theorem}\label{tm1.7}
    Assume that the chains $\process{X^{n}}$, $n\in\nat$, satisfy \eqref{t1}, \eqref{t2} and  \eqref{c1.h}. Let $\process{X}$ be a non-symmetric process determined by a kernel $k:\real^{d}\times\real^{d}\setminus \diag\to\real$ satisfying \eqref{c1.h}. Denote by $\process{P^{n}}$, $n\in\nat$, and $\process{P}$ the transition semigroups of $\process{X^{n}}$, $n\in\nat$, and $\process{X}$, respectively,

    If $\{P^{n}_tr_nf\}_{n\geq1}$ converges strongly to $P_tf$ for all $t\geq0$ and $f\in L^{2}(\real^{d},dx)$,  then there exists a Lebesgue null set $B$ such that the finite-dimensional distributions of $\process{X^{n}}$, $n\in\nat$, converge along $\mathbb{Q}$ on $B^{c}$ to those of $\process{X}$.
\end{theorem}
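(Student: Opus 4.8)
The plan is to express the finite-dimensional distributions through iterated applications of the transition semigroups and then to pass to the limit using the hypothesised strong convergence $P^n_tr_nf\to P_tf$. Fix test functions $f_1,\dots,f_k\in C_c(\real^d)$ and times $0\le t_1<\cdots<t_k$, and put $s_j:=t_j-t_{j-1}$ (with $t_0:=0$). By the Markov property,
\[
    \mathbb{E}^{x}\Big[\textstyle\prod_{j=1}^k f_j(X_{t_j})\Big]
    = P_{s_1}\big(f_1\,P_{s_2}(f_2\cdots P_{s_k}f_k)\big)(x),
\]
and the analogous identity holds for each chain $\process{X^n}$ with $P$ replaced by $P^n$ and $x$ by a lattice point $a\in\ZZ^d_n$. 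Since all $f_j$ have compact support, every nested partial expression stays in $L^2\cap L^\infty$ (resp.\ $L^2_n\cap L^\infty_n$). It therefore suffices to prove that, after applying $e_n$, the discrete nested expression converges in $L^2(\real^d,dx)$ to its continuous counterpart.

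To run the limit I would establish two stability properties of \emph{strong} convergence (in the Kuwae--Shioya sense recalled before Lemma~\ref{l1.6}). First, multiplication by a fixed function $f\in L^\infty\cap L^2$ preserves strong convergence: if $g_n\to g$ strongly, then $(r_nf)\,g_n\to fg$ strongly; this follows from Lemma~\ref{l1.6}(i) using $\|e_nr_nf\|_\infty\le\|f\|_\infty$, the property $e_nr_nf\to f$ in $L^2$, and dominated convergence. Second, and more importantly, the semigroups preserve strong convergence: if $g_n\to g$ strongly, then $P^n_tg_n\to P_tg$ strongly for every $t\ge0$. For the latter I would use that each $P^n_t$ is an $L^2_n$-contraction (being Markovian) and estimate
\[
    \|P^n_tg_n-r_nP_tg\|_{L^2_n}
    \le \|g_n-r_ng\|_{L^2_n}+\|P^n_tr_ng-r_nP_tg\|_{L^2_n},
\]
where the first term vanishes in the limit because strong convergence of $g_n$ is equivalent to $\|g_n-r_ng\|_{L^2_n}\to0$ (recall $e_nr_ng\to g$ in $L^2$), and the second vanishes by the hypothesis $P^n_tr_ng\to P_tg$.

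With these two properties I would pass to the limit in the nested expression by downward induction, starting from the innermost term $P^n_{s_k}r_nf_k\to P_{s_k}f_k$ (the hypothesis) and alternating the multiplication step and the semigroup step. Writing
\[
    u^n(a):=\mathbb{E}^{a}\big[\textstyle\prod_{j}f_j(X^n_{t_j})\big],
    \qquad
    u(x):=\mathbb{E}^{x}\big[\textstyle\prod_j f_j(X_{t_j})\big],
\]
this gives $e_nu^n\to u$ in $L^2(\real^d,dx)$. Since $L^2$-convergence yields almost-everywhere convergence only along a subsequence, I would fix a countable $\|\cdot\|_{L^2}$-dense family in $C_c(\real^d)$ and restrict the time points to $\mathbb{Q}$; a diagonal extraction then produces a subsequence and a single Lebesgue-null set $B$ (which may be enlarged to contain the exceptional set of the Hunt process $\process{X}$) off which $e_nu^n\to u$ pointwise, for all rational times and all functions of the family simultaneously. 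Because $e_nu^n(x)=u^n([x]_n)$, this says that for every $x\in B^c$ the chains started at $[x]_n$ have finite-dimensional distributions, at rational times and tested against the dense family, converging to those of $\process{X}$ started at $x$; extending to arbitrary $f_j\in C_c(\real^d)$ by density yields the asserted convergence of the finite-dimensional distributions along $\mathbb{Q}$ on $B^c$.

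The crux is the second stability lemma, which transfers the hypothesised convergence $P^n_tr_nf\to P_tf$ --- stated only for the special discretisations $r_nf$ --- to arbitrary strongly convergent sequences; the $L^2$-contractivity of the Markovian semigroups is precisely what makes this transfer work and, fed into the downward induction, what keeps the approximation stable through the $k$ successive semigroup applications without accumulating error. A secondary, more bookkeeping, difficulty is that strong convergence is an $L^2$ (hence averaged) notion, so convergence of the finite-dimensional distributions can only be secured off a Lebesgue-null set and along a countable set of times; this is exactly the origin of the exceptional set $B$ and of the restriction to $\mathbb{Q}$ in the statement.
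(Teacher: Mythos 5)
Your proof follows essentially the same route as the paper's: the paper compresses your downward induction into the phrase ``by the Markov property, the properties of the operators $r_n$ and $e_n$, and a standard diagonal argument'', obtains strong convergence of the nested expectations $\mathbb{E}^{\cdot}_n[r_nf_1(X^{n}_{t_1})\cdots r_nf_m(X^{n}_{t_m})]$ over a countable dense family of test functions and rational times, extracts a further subsequence to get pointwise convergence off a Lebesgue null set, and concludes with \cite[Proposition 3.4.4]{Ethier-Kurtz-Book-1986}. The one point you should correct is the justification of your key stability lemma: for \emph{non-symmetric} chains the semigroup $P^{n}_t$ is Markovian, hence an $L^{\infty}$-contraction, but in general \emph{not} an $L^{2}_n$-contraction --- the adjoint kernel need not be sub-Markovian, and the lower bound of the semi-Dirichlet form only yields $\|P^{n}_t\|_{L^{2}_n\to L^{2}_n}\leq e^{\alpha_0^{n}t}$. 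The argument survives nonetheless, because all you actually need is a bound on $\|P^{n}_t\|_{L^{2}_n\to L^{2}_n}$ that is uniform in $n$ for fixed $t$; this follows either from \eqref{c2.h} (which is in force whenever the theorem is applied) or, directly from the hypothesis of the theorem, by Banach--Steinhaus applied to the operators $P^{n}_tr_n:L^{2}(\real^{d},dx)\to L^{2}_n$ combined with the identities $r_ne_nf_n=f_n$ and $\|e_nf_n\|_{L^{2}}=\|f_n\|_{L^{2}_n}$, which show that $\|P^{n}_t\|_{L^{2}_n\to L^{2}_n}\leq\|P^{n}_tr_n\|_{L^{2}\to L^{2}_n}$.
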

\begin{proof}
    Choose an arbitrary countable family $\mathcal{C}\subseteq C_c^{\mathrm{Lip}}(\real^{d})$ which is dense in $C^{\mathrm{Lip}}_c(\real^{d})$ with respect to $\|\cdot\|_\infty$. By the Markov property, the properties of the operators $r_n$ and $e_n$, and a standard diagonal argument, we can extract a subsequence $\{n_i\}_{i\geq 1}\subseteq\nat$ such that for all $m\geq 1$, all $t_1,\dots,t_m\in\mathbb{Q}$, $0\leq t_1\leq\dots\leq t_m<\infty$, and all $f_1,\dots f_m\in \mathcal{C}$, $\{\mathbb{E}^{\cdot}_n[r_nf_1(X^{n}_{t_1})\cdots r_nf_m(X^{n}_{t_m})]\}_{n\geq1}$ converges strongly to $\mathbb{E}^{\cdot}[f_1(X_{t_1})\cdots f_m(X_{t_m})]$.

    Again by a diagonal argument, we conclude that there is a further subsequence $\{n_{i}'\}_{i\geq1}\subseteq\{n_i\}_{i\geq1}$ and a Lebesgue null set $B\subseteq\real^{d}$, such that the above convergence holds pointwise on $B^{c}$. The assertion now follows from
    \cite[Proposition 3.4.4]{Ethier-Kurtz-Book-1986}.
\end{proof}

Denote by $\mathbb{D}(\real^{d})$ the  space of all c\`adl\`ag functions $f:[0,\infty)\to\real^{d}$. The space $\mathbb{D}(\real^{d})$  equipped  with Skorokhod's $J_1$ topology becomes a Polish space, the so-called Skorokhod space,  cf.\ \cite{Ethier-Kurtz-Book-1986}.
\begin{corollary}\label{p1.8}
    Assume that the conditions of Theorems \ref{tm1.2} and \ref{tm1.7} hold, and let $B$ be the Lebesgue null set from Theorem \ref{tm1.7}. Denote by $\mu_n$ and $\mu$ the initial distributions of $\process{X}$ and $\process{X^{n}}$, $n\in\nat$, respectively. If $\mu(B)=0$ and if $\mu_n\to\mu$ weakly,  then the following convergence holds in Skorokhod space:
	\begin{equation}\label{eq2.3}
        \left\{X^{n}_t\right\}_{t\geq0}\xrightarrow[n\to\infty]{d}\process{X}.
    \end{equation}
\end{corollary}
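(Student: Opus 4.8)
The plan is to prove \eqref{eq2.3} by the classical two-step scheme for weak convergence in the Skorokhod space $\mathbb{D}(\real^{d})$: first establish relative compactness of the path laws of $\process{X^{n}}$, and then pin down the only possible limit by identifying the limiting finite-dimensional distributions. The engine that combines the two is the standard principle that a relatively compact sequence in $\mathbb{D}(\real^{d})$ whose finite-dimensional distributions converge on a set of times that is dense in $[0,\infty)$ converges weakly to the (necessarily c\`adl\`ag) process carrying those distributions; cf.\ \cite[Theorem~3.7.8]{Ethier-Kurtz-Book-1986}.

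For relative compactness I would argue as follows. Since $\mu_n\to\mu$ weakly on $\real^{d}$, the sequence $\{\mu_n\}_{n\in\nat}$ is tight, so the hypothesis on the initial laws in Theorem~\ref{tm1.2} is met and $\process{X^{n}}$, $n\in\nat$, is tight; as $\mathbb{D}(\real^{d})$ is Polish, Prohorov's theorem turns this into relative compactness of the path laws. For the finite-dimensional distributions I would start from Theorem~\ref{tm1.7}. Fix $f_1,\dots,f_m$ in a countable $\|\cdot\|_\infty$-dense subset of $C_c^{\mathrm{Lip}}(\real^{d})$ and rationals $0\le t_1\le\dots\le t_m$, and set $g_n(a):=\mathbb{E}^{a}_n[\prod_{i=1}^{m}r_nf_i(X^{n}_{t_i})]$ and $g(x):=\mathbb{E}^{x}[\prod_{i=1}^{m}f_i(X_{t_i})]$. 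Theorem~\ref{tm1.7} gives, along a subsequence, $e_ng_n\to g$ pointwise on $B^{c}$, while $\|g_n\|_\infty\le\prod_i\|f_i\|_\infty$. Because $\process{X^{n}}$ starts from $\mu_n$ and $\process{X}$ from $\mu$, the finite-dimensional distributions in question equal $\int g_n\,d\mu_n$ and $\int g\,d\mu$; note that the latter is well defined precisely because $\mu(B)=0$ guarantees that $\mu$ avoids the exceptional set on which the Hunt process $\process{X}$ is undefined. The crux is then to pass to the limit $\int g_n\,d\mu_n\to\int g\,d\mu$, using the uniform bound, the convergence $e_ng_n\to g$ on $B^{c}$, and $\mu_n\to\mu$ weakly together with $\mu(B)=0$.

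Granting this lifting step, the proof concludes by a routine subsequence argument. Given any subsequence of $\process{X^{n}}$, the hypothesis of Theorem~\ref{tm1.7} still holds along it, so a further subsequence satisfies $e_ng_n\to g$ on $B^{c}$ simultaneously for all rational time vectors and all $f_i$ in the fixed countable dense family; the lifting step then yields $\int g_n\,d\mu_n\to\int g\,d\mu$ for these data. Since products $\prod_i f_i$ with $f_i\in C_c^{\mathrm{Lip}}(\real^{d})$ are convergence-determining, this means that the finite-dimensional distributions converge weakly at every rational time to those of $\process{X}$ started from $\mu$. As the sub-subsequence is relatively compact, \cite[Theorem~3.7.8]{Ethier-Kurtz-Book-1986} upgrades this to weak convergence in $\mathbb{D}(\real^{d})$ towards $\process{X}$ started from $\mu$ (the case $t=0$ being covered by $X^{n}_0\Rightarrow X_0$, which follows from $\mu_n\to\mu$). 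The limit being independent of the chosen subsequence, the whole sequence converges, which is exactly \eqref{eq2.3}. I expect the lifting step $\int g_n\,d\mu_n\to\int g\,d\mu$ to be the main obstacle: because the $\mu_n$ need only converge weakly and may be mutually singular while $e_ng_n\to g$ holds merely Lebesgue-a.e.\ on $B^{c}$, ordinary dominated convergence does not apply. I anticipate having to split $\int(e_ng_n-g)\,d\mu_n$ and to control it by a convergence-of-integrals-against-varying-measures argument, upgrading the pointwise convergence (e.g.\ via the local constancy of $e_ng_n$ on the cubes $\bar a$ and quasi-continuity of $g$) and invoking $\mu(B)=0$ to discard the exceptional set — this is the precise point where the hypothesis $\mu(B)=0$ is indispensable.
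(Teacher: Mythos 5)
Your argument is essentially the paper's: the published proof is a single sentence combining Theorems \ref{tm1.2} and \ref{tm1.7} with \cite[Lemma VI.3.19]{Jacod-Shiryaev-2003} and the remark following it, which is exactly the tightness-plus-convergence-of-finite-dimensional-distributions-along-a-dense-time-set principle you invoke via Ethier--Kurtz. The one step you flag as unresolved --- lifting the Lebesgue-a.e.\ convergence of the conditional laws on $B^{c}$ to convergence of the mixtures $\int g_n\,d\mu_n\to\int g\,d\mu$ when the $\mu_n$ are singular lattice measures --- is precisely what the paper compresses into the word ``combining'' without further detail, so your account is, if anything, more explicit about where the hypotheses $\mu(B)=0$ and $\mu_n\to\mu$ actually have to do work.
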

\begin{proof}
    The assertion follows by combining Theorems \ref{tm1.2}, \ref{tm1.7}, \cite[Lemma VI.3.19]{Jacod-Shiryaev-2003} and the remark following that lemma.
\end{proof}

Theorem \ref{tm1.7} states that \eqref{eq2.3} follows if we can prove  ``strong convergence" of  $P^n_t\to P_t$, $t>0$.
A sufficient condition for this convergence is given in the following theorem.
\enlargethispage{\baselineskip}
\begin{theorem}\label{tm1.9}
    Assume that \eqref{c1.h} holds for both $\process{X^{n}}$, $n\in\nat$, and $\process{X}$. The semigroups $\{P^{n}_tr_nf\}_{n\geq1}$ converge strongly to $P_tf$ for all $t\geq0$ and $f\in L^{2}(\real^{d},dx)$ if the following conditions are satisfied:
    \begin{gather}
    \label{c2.h}\tag{\textbf{C2}}
        0<\liminf_{n\to\infty}\alpha_0^{n}\leq\limsup_{n\to\infty}\alpha_0^{n}<\infty;\\
    \label{c3.h}\tag{\textbf{C3}}
        \forall\rho>0\::\:
        \sup_{x\in B_\rho(0)}\int_{\real^{d}}(1\wedge|y|^{2})k_s(x,x+y)\,dy<\infty;\\
    \label{c4.h}\tag{\textbf{C4}}
        \forall\rho>0\::\:
        \limsup_{n\to\infty}\sup_{a\in B_\rho(0)}\sum_{b\in\ZZ_n^{d}}(1\wedge|b|^{2})C_s^{n}(a,a+b)<\infty;\\
    \label{c5.h}\tag{\textbf{C5}}
        \forall\varepsilon>0\:\:\exists n_0\in\nat\:\:\forall n_0\leq m\leq n,\; f\in L^{2}_m\::\: \\
    \notag
        \mathcal{E}^{n}(r_ne_mf,r_ne_mf)^{^{1/2}}\leq \mathcal{E}^{m}(f,f)^{1/2}+\varepsilon;\\
    \label{c6.h}\tag{\textbf{C6}}
        \parbox{.85\textwidth}{for all sufficiently small $\varepsilon > 0$ and large $m\in\nat$}\\
        \lim_{n\to\infty} \bar{\mathcal{E}}^{n}_{m,\varepsilon}(f,f)
        =\mathcal{E}_{m,\varepsilon}(f,f),\quad f\in C^{\mathrm{Lip}}_c(\real^{d})\notag
    \intertext{where for all $f\in\Cclip(\real^d)$
    $$\begin{aligned}
    \mathcal{E}_{m,\varepsilon}(f,f)
        &:=\frac{1}{2}\iint_{\{(x,y)\in B_m(0)\times B_m(0):\, |x-y|>\varepsilon\}}(f(y)-f(x))^{2}k_s(x,y)\,dx\,dy,\\
    \bar{\mathcal{E}}^{n}_{m,\varepsilon}(f,f)
        &:=\frac{n^{d}}{2}\iint_{\{(x,y)\in B_m(0)\times B_m(0):\, |x-y|>\varepsilon\}}(f(y)-f(x))^{2} C^{n}_s(a,b)\I_{\bar a\times\bar b}(x,y)\,dx\,dy,
    \end{aligned}$$
    }
    \label{c7.h}\tag{\textbf{C7}}
    \begin{aligned}[t]
    \textup{(i)}&\qquad x\mapsto\int_{B_1(0)}|y|^{2}k_s(x,x+y)\,dy\in L_{\mathrm{loc}}^{2}(\real^{d},dx),\\
        \textup{(ii)}&\qquad x\mapsto\int_{B^{c}_1(0)}k_s(x,x+y)\,dy\in L^{2}(\real^{d},dx)\cup L^{\infty}(\real^{d},dx),\\
        \textup{(iii)}&\qquad x\mapsto\int_{B_1(0)}|y||k_s(x,x+y)-k_s(x,x-y)|\,dy\in L_{\mathrm{loc}}^{2}(\real^{d},dx),\\
        \textup{(iv)}&\qquad x\mapsto\int_{\rd}(1\wedge|y|) |k_a(x,x+y)|\,dy\in L_{\mathrm{loc}}^{2}(\real^{d},dx);
    \end{aligned}\\
    \label{c8.h}\tag{\textbf{C8}}
        \int_{\rd} (1\wedge|y|^2)|k_s(x,x+y)-n^{d}C^{n}_s([x]_n,[x]_n+[y]_n)|\,dy
        \xrightarrow[n\to\infty]{L_{\mathrm{loc}}^{2}(\real^{d},dx)}0;\\
    \label{c9.h}\tag{\textbf{C9}}
        \parbox{.85\textwidth}{for all sufficiently large $R>1$}\\
        \int_{B^{c}_{2R}(0)}\bigg(\int_{B_R(-x)}k_s(x,x+y)\,dy\bigg)^{2}dx<\infty;\notag\\
    \label{c10.h}\tag{\textbf{C10}}
        \parbox{.85\textwidth}{for all sufficiently large $R>1$}\\
        \int_{B^{c}_{2R}(0)}\bigg(\int_{B_R(-x)}|k_s(x,x+y)-n^{d}C^{n}_s([x]_n,[x]_n+[y]_n)|\,dy\bigg)^{2}dx\xrightarrow{n\to\infty}0;\notag\\
    \label{c11.h}\tag{\textbf{C11}}
        \int_{B_1(0)}|y||k_s(x,x+y)-k_s(x,x-y)\phantom{-n^{d}C^{n}_s([x]_n,[x]_n+[y]_n)+n^{d}C^{n}_s([x]_n,[x]_n-[y]_n)|}\\
        \qquad\mbox{}-n^{d}C^{n}_s([x]_n,[x]_n+[y]_n)+n^{d}C^{n}_s([x]_n,[x]_n-[y]_n)|\,dy
        \xrightarrow[n\to\infty]{L_{\mathrm{loc}}^{2}(\real^{d},dx)}0;\notag\\
    \label{c12.h}\tag{\textbf{C12}}
        \int_{\rd}(1\wedge|y|)|k_a(x,x+y)-n^{d}C^{n}_a([x]_n,[x]_n+[y]_n)|\,dy
        \xrightarrow[n\to\infty]{L_{\mathrm{loc}}^{2}(\real^{d},dx)}0;\\
    \label{c13.h}\tag{\textbf{C13}}
                \parbox{.85\textwidth}{for all sufficiently large $R>1$}\\
        \int_{B^{c}_{2R}(0)}\bigg(\int_{B_R(-x)}|k_a(x,x+y)-n^{d}C^{n}_a([x]_n,[x]_n+[y]_n)|\,dy\bigg)^{2}dx
        \xrightarrow{n\to\infty}0.\notag
    \end{gather}
\end{theorem}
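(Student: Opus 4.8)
The plan is to derive the asserted strong convergence of the semigroups from \emph{Mosco convergence} of the non-symmetric semi-Dirichlet forms $(H^n,\mathcal F^n)$ to $(H,\mathcal F)$. By the non-symmetric version of Hino's theorem \cite{Hino-1998}, in the formulation for sequences of forms living on different Hilbert spaces due to \cite{Tolle-Thesis-2006}, Mosco convergence $H^n\to H$ is equivalent to the strong convergence of $P^n_tr_nf$ to $P_tf$ for every $t\ge0$ and every $f\in L^2(\real^d,dx)$. Hence it suffices to verify the two conditions defining Mosco convergence: a weak-$\liminf$ inequality for the symmetric parts, and the existence, for each limit function, of a strongly convergent recovery sequence along which the full bilinear forms converge against strongly convergent test sequences. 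Two structural facts will be used repeatedly. First, \eqref{c2.h} together with the comparability estimates \eqref{eq-comp1}--\eqref{eq-comp2} makes the norms $\mathcal E^n_1$ and $H^n_{\alpha_0}$ comparable \emph{uniformly} in $n$; this lets me treat $H^n$ as the symmetric Dirichlet energy $\tfrac12\mathcal E^n$ perturbed by a uniformly controlled antisymmetric term, so that the $\liminf$ condition can be reduced to the symmetric forms $\mathcal E^n$. Second, Lemma \ref{l1.6} turns strong and weak convergence into the ordinary $L^2$-statements $e_nf_n\to f$ and $\langle e_nf_n,g\rangle\to\langle f,g\rangle$, and Proposition \ref{p1.3} provides the density of $C_c^\infty(\real^d)$ in $\mathcal F$ that will let me restrict attention to smooth test functions.

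For the recovery condition, given $f\in L^2(\real^d,dx)$ I would first approximate $f$ in the $H^{1/2}_1$-norm by $g\in C_c^\infty(\real^d)$ using Proposition \ref{p1.3}, and take $r_ng$ as the candidate at level $n$; by property (iii) of the operators $r_n,e_n$ and Lemma \ref{l1.6}(i), $r_ng\to g$ strongly. The core computation is then to show that $H^n(r_ng,r_n\varphi)\to H(g,\varphi)$ for all $g,\varphi\in C_c^\infty(\real^d)$. Writing $H^n(r_ng,r_n\varphi)=\langle-\mathcal A^nr_ng,r_n\varphi\rangle_{L^2_n}$ via Proposition \ref{p1.4}(iii) and Taylor-expanding the increment $g(b)-g(a)$, the convergence splits into pieces matched to the hypotheses: the second-order (small-jump) term is handled by \eqref{c7.h}(i) and \eqref{c8.h}; the first-order symmetric drift, coming from the odd-in-$y$ part of $k_s$, by \eqref{c7.h}(iii) and \eqref{c11.h}; the antisymmetric drift by \eqref{c7.h}(iv) and \eqref{c12.h}; and the large-jump contributions by \eqref{c7.h}(ii) together with \eqref{c9.h}, \eqref{c10.h} and \eqref{c13.h}, while the uniform small-jump bounds \eqref{c3.h} and \eqref{c4.h} guarantee that the near-diagonal remainder is negligible. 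A two-parameter diagonal argument upgrades this from smooth $g$ to arbitrary $f$; here \eqref{c5.h} is exactly what is needed to ensure that the interpolation $r_ne_m$ between levels $m$ and $n\ge m$ does not increase the Dirichlet energy by more than $\varepsilon$, so that the approximation remains a valid recovery sequence.

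The $\liminf$ condition is where I expect the main difficulty. Let $u_n\in L^2_n$ converge weakly to $u\in L^2(\real^d,dx)$; we may assume $\sup_n\mathcal E^n(u_n,u_n)<\infty$, as otherwise there is nothing to prove. Lattice Dirichlet energies are not directly weakly lower semicontinuous, so I would first pass to the truncated forms $\bar{\mathcal E}^n_{m,\varepsilon}$, which discard both the small jumps $|x-y|\le\varepsilon$ and the far field outside $B_m(0)$. Their kernels are bounded away from the diagonal and supported on a bounded set, so by \eqref{c3.h} the limiting truncated form $\mathcal E_{m,\varepsilon}$ is a \emph{bounded} symmetric quadratic form on $L^2(\real^d,dx)$, hence weakly lower semicontinuous. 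Transferring $u_n$ to $e_nu_n$, which converges weakly to $u$ by Lemma \ref{l1.6}(ii), and using \eqref{c8.h} to replace the discrete kernel $n^dC^n_s$ by $k_s$ on the truncated region (with \eqref{c6.h} identifying the limit), I obtain for each fixed $m$ and $\varepsilon$
\[
    \liminf_{n\to\infty}\tfrac12\,\mathcal E^n(u_n,u_n)
    \ \ge\ \liminf_{n\to\infty}\bar{\mathcal E}^n_{m,\varepsilon}(e_nu_n,e_nu_n)
    \ \ge\ \mathcal E_{m,\varepsilon}(u,u).
\]
Letting $\varepsilon\downto0$ and $m\to\infty$, monotone convergence together with the uniform small-jump bound \eqref{c4.h} yields $\liminf_n\mathcal E^n(u_n,u_n)\ge\mathcal E(u,u)$; the antisymmetric part does not enter this inequality, and the passage to the required $\liminf$ for the symmetric part of $H^n$ is supplied by the uniform comparability from \eqref{c2.h}. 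The delicate point, and the part I expect to demand the most care, is the middle inequality above: for an only $L^2$-bounded sequence $e_nu_n$ one must control the kernel-difference term $\iint(e_nu_n(y)-e_nu_n(x))^2\,(n^dC^n_s-k_s)$ on the truncated region, and it is precisely the uniform integrability furnished by \eqref{c3.h}--\eqref{c4.h} combined with the $L^2_{\mathrm{loc}}$-convergence in \eqref{c8.h} that must be exploited to make this remainder vanish.

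Once both Mosco conditions are established, Mosco convergence $H^n\to H$ follows, and the equivalence \cite{Hino-1998,Tolle-Thesis-2006} gives the strong convergence $P^n_tr_nf\to P_tf$ for all $t\ge0$ and $f\in L^2(\real^d,dx)$, as claimed. Besides the $\liminf$ step just discussed, the second point requiring care is the bookkeeping in the recovery step: one must verify that the small-jump, drift, antisymmetric and large-jump pieces separate cleanly and that each limit is matched to the corresponding hypothesis among \eqref{c7.h}--\eqref{c13.h}, with \eqref{c2.h} ensuring throughout that the non-symmetric correction stays uniformly subordinate to the symmetric energy so that the symmetric machinery applies.
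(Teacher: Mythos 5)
Your proposal follows essentially the same route as the paper: both verify the two conditions of Tolle's non-symmetric Mosco convergence (equivalent to strong semigroup convergence), reduce the weak-$\liminf$ condition to the symmetric forms via \eqref{c2.h} and the comparability estimates \eqref{eq-comp1}--\eqref{eq-comp2} (the paper outsources the resulting statement to \cite[Theorem 4.6]{Chen-Kim-Kumagai-2013} under \eqref{c3.h}--\eqref{c6.h}, which is what your truncation argument sketches), and handle the bilinear condition by writing $H^n(r_ng,\cdot)=\langle-\mathcal A^nr_ng,\cdot\rangle_{L^2_n}$ and proving strong $L^2$-convergence of the discretized generators via a decomposition into small-jump, drift, antisymmetric and far-field pieces matched to \eqref{c7.h}--\eqref{c13.h}, exactly as in the paper's $A_1,\dots,A_5$ estimates. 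The only minor discrepancy is bookkeeping: in the paper \eqref{c5.h} (like \eqref{c6.h}) belongs to the $\liminf$ step rather than the recovery step, and the bilinear condition must be tested against arbitrary weakly convergent sequences $f_n\rightharpoonup f$, which your strong generator convergence does supply.
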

\begin{proof}
According to the properties of the operators $r_n$ and $e_n$, $n\in\nat$, Propositions \ref{p1.3} and \ref{p1.4}, and \cite[Theorem 2.41 and Remark 2.44]{Tolle-Thesis-2006} the assertion follows if
\begin{itemize}
  \item[(i)]
    for every sequence $\{f_n\}_{n\geq1}$, $f_n\in \mathcal{F}^{n}$, converging weakly to some $f\in L^{2}(\real^{d},dx)$ and satisfying $\displaystyle\liminf_{n\to\infty}H^{n}_1(f_n,f_n)<\infty$, we have that $f\in\mathcal{F}$;

  \item[(ii)]
    for every $g\in C_c^{2}(\real^{d})$ and every sequence $\{f_n\}_{n\geq1}$, $f_n\in \mathcal{F}^{n}$, converging weakly to $f\in \mathcal{F}$,
    $$
        \lim_{n\to\infty}H^{n}(r_ng,f_n)=H(g,f).
    $$
\end{itemize}
Indeed, (i) and (ii) imply that for any sequence $\{f_n\}_{n\geq1}$, $f_n\in L^{2}_n$, converging strongly to $f\in L^{2}(\real^{d},dx)$, the sequence $\{P^{n}_tf_n\}_{n\geq1}$ converges strongly to $P_tf$ for every $t\geq0$, cf.\ \cite[Theorem 2.41 and Remark 2.44]{Tolle-Thesis-2006}.
 For any fixed $f\in L^2(\rd,dx)$ we set $f_n = r_n f$. Since $r_n f \to f$ strongly we conclude that $P_t^n r_nf \to P_tf$ strongly as claimed.

Let us now prove that \eqref{c1.h}--\eqref{c13.h} imply (i) and (ii). We begin with (i). According to Proposition \ref{p1.4}, we have
$$
    \mathcal{E}^{n}_1(f)\leq \frac{4(1\vee\alpha^{n}_0)}{1\wedge\alpha^{n}_0}H^{n}_{1}(f),
    \quad f\in \mathcal{F}^{n}.
$$
Let  $\{f_n\}_{n\geq1}$, $f_n\in \mathcal{F}^{n}$,  be an arbitrary sequence converging weakly to some $f\in L^{2}(\real^{d},dx)$ such that $\liminf_{n\to\infty}H^{n}_1(f_n)<\infty$; the condition \eqref{c2.h} ensures that $\liminf_{n\to\infty}\mathcal{E}^{n}_1(f_n)<\infty$. Finally, \cite[Theorem 4.6]{Chen-Kim-Kumagai-2013} states that under \eqref{c3.h}--\eqref{c6.h}
$$
    \mathcal{E}(f,f)\leq\liminf_{n\to\infty}\mathcal{E}^{n}(f_n,f_n),
$$
which proves (i).

In order to prove (ii) we proceed as follows. According to Proposition \ref{p1.4} we have for  $g\in C_c^{2}(\real^{d})$ and  $f_n\in \mathcal{F}^{n}$
$$
    H^{n}(r_ng,f_n)
    =\langle-\mathcal{A}^{n}r_ng,f_n\rangle_{L^2_n},
    \quad n\in\nat.
$$
Using \eqref{c7.h} it is shown in \cite[Theorem 2.2]{Schilling-Wang-2015} that the generator $(\mathcal{A},\mathcal{D}_{\mathcal{A}})$
of $\process{P}$ (or, equivalently, of $(H,\mathcal{F})$) has the following properties:
\begin{itemize}
\item[(i)]
    $C_c^{2}(\real^{d})\subseteq\mathcal{D}_\mathcal{A}$;
\item[(ii)]
    for every $g\in C_c^{2}(\real^{d})$,
    \begin{align*}
        \mathcal{A}g(x) &= \int_{\real^{d}}(g(x+y)-g(x)-\langle\nabla g(x),y\rangle\I_{B_1(0)}(y))k_s(x,x+y)\,dy\\
        &\quad+ \frac{1}{2}\int_{B_1(0)}\langle\nabla g(x),y\rangle(k_s(x,x+y)-k_s(x,x-y))\,dy\\
        &\quad+\int_{\real^{d}}(g(x+y)-g(x))k_a(x,x+y)\,dy;
    \end{align*}
\item[(iii)]
    for all $g\in C_c^{2}(\real^{d})$ and all $f\in\mathcal{F}$,
    $$
        H(g,f)=\langle-\mathcal{A}g,f\rangle_{L^2}.
    $$
\end{itemize}
Therefore, it suffices to prove that $\{\mathcal{A}^{n}r_ng\}_{n\geq1}$ converges strongly to $\mathcal{A}g$ for every $g\in C^{2}_c(\real^{d})$. Observe that for any $g\in C_c^{2}(\real^{d})$ and $n\in\nat$,
\begin{align*}
    \mathcal{A}^{n}r_ng(a)
    &=\sum_{b\in\ZZ_n^{d}}(r_ng(a+b)-r_ng(a))C^{n}(a,a+b)\\
    &=\sum_{b\in\ZZ_n^{d}}(r_ng(a+b)-r_ng(a)-\langle r_n\nabla g(a),b\rangle\I_{\{|b|\leq1\}}(b))C^{n}_s(a,a+b)\\
    &\quad+ \frac{1}{2}\sum_{|b|\leq1}\langle r_n\nabla g(a),b\rangle(C^{n}_s(a,a+b)-C^{n}_s(a,a-b))\\
    &\quad+\sum_{b\in\ZZ_n^{d}}(r_ng(a+b)-r_ng(a))C^{n}_a(a,a+b).
\end{align*}
Using the triangle inequality we get
$$
    \|e_n\mathcal{A}^{n}r_ng-\mathcal{A}g\|_{L^{2}}
    \leq \sum_{i=1}^5\|A^{n}_i-A_i\|_{L^{2}},
$$
where for $\rho := 1 + (2n)^{-1}\sqrt{d}$ the $A_i$ and $A_i^n$ are given by
\begin{align*}
    A_1
    &:=\int_{B_{\rho}(0)} \big(g(x+y)-g(x)-\langle\nabla g(x),y\rangle \I_{B_1(0)}(y)\big) \, k_s(x,x+y)\,dy\\
    A_1^{n}
    &:=\int_{B_{\rho}(0)}\big(r_ng([x]_n+[y]_n)-r_ng([x]_n)-\langle r_n\nabla g([x]_n),[y]_n\rangle \I_{B_1(0)}([y]_n)\big)\times\\
    &\hphantom{\int_{B_{\rho}(0)}\big(r_ng([x]_n+[y]_n)-r_ng([x]_n)-\langle r_n\nabla g([x]_n),[y]_n\rangle}\mbox{}\times n^{d}\,C^{n}_s([x]_n,[x]_n+[y]_n)\,dy\\
    A_2
    &:=\int_{B^{c}_{\rho}(0)}\big(g(x+y)-g(x)\big) \, k_s(x,x+y)\,dy\\
    A_2^{n}
    &:=\int_{B^{c}_{\rho}(0)}\big(r_ng([x]_n+[y]_n)-r_ng([x]_n)\big) \,n^{d}\, C^{n}_s([x]_n,[x]_n+[y]_n)\,dy\\
    A_3
    &:=\frac{1}{2}\int_{B_{\rho}(0)}\langle\nabla g(x),y\rangle \I_{B_1(0)}(y) (k_s(x,x+y)-k_s(x,x-y))\,dy\\
    A^{n}_3
    &:=\frac{1}{2}\int_{B_{\rho}(0)}\langle r_n\nabla g([x]_n),[y]_n\rangle\I_{B_1(0)}([y]_n)\times\\
    &\hphantom{\frac{1}{2}\int_{B_{\rho}(0)}\langle r_n\nabla g([x]_n),[y]_n\rangle}\times n^{d}\, (C^{n}_s([x]_n,[x]_n+[y]_n)-C^{n}_s([x]_n,[x]_n-[y]_n))\,dy\\
    A_4
    &:=\int_{B_{\rho}(0)}\big(g(x+y)-g(x)\big) \, k_a(x,x+y)\,dy\\
    A_4^{n}
    &:=\int_{B_{\rho}(0)}\big(r_ng([x]_n+[y]_n)-r_ng([x]_n)\big) \,n^{d} \,C^{n}_a([x]_n,[x]_n+[y]_n)\,dy\\
    A_5
    &:=\int_{B^{c}_{\rho}(0)}\big(g(x+y)-g(x)\big)\,k_a(x,x+y)\,dy\\
    A_5^{n}
    &:=\int_{B^{c}_{\rho}(0)}\big(r_ng([x]_n+[y]_n)-r_ng([x]_n)\big) \,n^{d}\, C^{n}_a([x]_n,[x]_n+[y]_n)\,dy.
\end{align*}
  In the remaining part of the proof we assume $R>1+\sqrt{d}/2$ such that $\supp g\subseteq B_R(0)$ and we write $\rho := 1 + (2n)^{-1}\sqrt{d}$ and $\sigma := 1 - (2n)^{-1}\sqrt{d}$.
\begin{align*}
    &\|A^{n}_1-A_1\|_{L^{2}}\\
    &\leq \bigg(\int_{B_{2R}(0)}\bigg|\int_{B_{\rho}(0)} \big[ g(x+y)-g(x)-\langle\nabla g(x),y\rangle\I_{B_1(0)}(y)\\
    &\qquad\mbox{} -r_ng([x]_n+[y]_n) + r_ng([x]_n) + \langle r_n\nabla g([x]_n),[y]_n\rangle\I_{B_1(0)}([y]_n)\big] \, k_s(x,x+y)\, dy\bigg|^{2}dx\bigg)^{\frac{1}{2}}\\
    &\qquad\mbox{} + \bigg(\int_{B_{2R}(0)}\bigg|\int_{B_{\rho}(0)} \big[r_ng([x]_n+[y]_n)-r_ng([x]_n)-\langle r_n\nabla g([x]_n),[y]_n\rangle\I_{B_1(0)}([y]_n)\big]\times\\
    &\qquad\qquad\qquad\qquad\qquad\qquad\qquad\qquad \times\big(k_s(x,x+y)-n^{d}C^{n}_s([x]_n,[x]_n+[y]_n)\big)\, dy\bigg|^{2} dx\bigg)^{\frac{1}{2}}\\
    &\leq
    \bigg(\int_{B_{2R}(0)}\bigg|\int_{B_{\sigma}(0)} \big[ g(x+y)-g(x)-\langle\nabla g(x),y\rangle\\
    &\qquad\qquad\mbox{}-r_ng([x]_n+[y]_n) + r_ng([x]_n) + \langle r_n\nabla g([x]_n),[y]_n\rangle\big] \, k_s(x,x+y)\,dy\bigg|^{2} dx\bigg)^{\frac{1}{2}}\\
    &\qquad\mbox{} + \big(4\|g\|_\infty+2\rho\|\nabla g\|_\infty\big) \bigg(\int_{B_{2R}(0)}\bigg(\int_{B_{\rho}(0)\setminus B_{\sigma}(0)} k_s(x,x+y)\,dy\bigg)^{2} dx\bigg)^{\frac{1}{2}}\\
    &\qquad\mbox{}+ \|\nabla^{2}g\|_\infty \bigg(\int_{B_{2R}(0)}\bigg(\int_{B_{\sigma}(0)}|y|^{2}\, \big| k_s(x,x+y)-n^{d}\,C^{n}_s([x]_n,[x]_n+[y]_n)\big|\,dy \bigg)^{2} dx\bigg)^{\frac{1}{2}}\\
    &\qquad\mbox{}+ \big(2\|g\|_\infty+\rho\|\nabla g\|_\infty\big) \times\\
    &\qquad\qquad\mbox{}\times\bigg(\int_{B_{2R}(0)}\bigg(\int_{B_{\rho}(0)\setminus B_{\sigma}(0)}\big|k_s(x,x+y)-n^{d}\,C^{n}_s([x]_n,[x]_n+[y]_n)\big|\,dy\bigg)^{2} dx\bigg)^{\frac{1}{2}}.
\end{align*}
By     monotone and dominated convergence theorem, Taylor's theorem, \eqref{c7.h} (i) and (ii), and \eqref{c8.h}, we conclude  that $\|A^{n}_1-A_1\|_{L^{2}}\to 0$.
Next,
\begin{align*}
    &\|A^{n}_2-A_2\|_{L^{2}}\\
    &\leq \bigg(\int_{B_{2R}(0)}\bigg|\int_{B^{c}_{\rho}(0)} \big[g(x+y)-g(x)-r_ng([x]_n+[y]_n)+r_ng([x]_n)\big] \, k_s(x,x+y)\, dy\bigg|^{2} dx\bigg)^{\frac{1}{2}}\\
    &\qquad\mbox{} + \bigg(\int_{B^{c}_{2R}(0)}\bigg|\int_{B^{c}_{\rho}(0)} \big[ g(x+y)-r_ng([x]_n+[y]_n)\big] \, k_s(x,x+y)\, dy\bigg|^{2} dx\bigg)^{\frac{1}{2}}\\
    &\qquad\mbox{} + \bigg(\int_{B_{2R}(0)}\bigg|\int_{B^{c}_{\rho}(0)} \big[ r_ng([x]_n+[y]_n)-r_ng([x]_n)\big] \times\\
    &\qquad\qquad\qquad \times\big(k_s(x,x+y)-n^{d}C^{n}_s([x]_n,[x]_n+[y]_n)\big) \,dy\bigg|^{2} dx\bigg)^{\frac{1}{2}}\\
    &\qquad\mbox{}+\bigg(\int_{B^{c}_{2R}(0)} \bigg|\int_{B^{c}_{\rho}(0)} r_ng([x]_n+[y]_n) \big(k_s(x,x+y)-n^{d}C^{n}_s([x]_n,[x]_n+[y]_n)\big) \,dy\bigg|^{2} dx\bigg)^{\frac{1}{2}}\\
    &\leq \bigg(\int_{B_{2R}(0)} \bigg|\int_{B^{c}_{\rho}(0)} \big[g(x+y)-g(x)-r_ng([x]_n+[y]_n) + r_ng([x]_n)\big]\, k_s(x,x+y)\, dy\bigg|^{2} dx\bigg)^{\frac{1}{2}}\\
    &\qquad\mbox{} +\bigg(\int_{B^{c}_{2R}(0)} \bigg(\int_{B_{R}(-x)} \big|g(x+y)-r_ng([x]_n+[y]_n)\big| \,k_s(x,x+y) \,dy\bigg)^{2} dx\bigg)^{\frac{1}{2}}\\
    &\qquad\mbox{} +2\|g\|_\infty \bigg(\int_{B_{2R}(0)}\bigg(\int_{B^{c}_{\rho}(0)} \big|k_s(x,x+y)-n^{d}C^{n}_s([x]_n,[x]_n+[y]_n)\big| \,dy\bigg)^{2} dx\bigg)^{\frac{1}{2}}\\
    &\qquad\mbox{} +\|g\|_\infty\bigg(\int_{B^{c}_{2R}(0)} \bigg(\int_{B_{R}(-x)} \big|k_s(x,x+y)-n^{d}C^{n}_s([x]_n,[x]_n+[y]_n)\big| \,dy\bigg)^{2} dx\bigg)^{\frac{1}{2}}.
\end{align*}

Again, by monotone and dominated convergence theorem, Taylor's theorem, \eqref{c7.h} (ii), \eqref{c8.h}, \eqref{c9.h} and \eqref{c10.h}, we have  that $\|A^{n}_2-A_2\|_{L^{2}}\to 0$. Further,
\begin{small}
\begin{align*}
&\|A^{n}_3-A_3\|_{L^{2}}\\
    &\leq\frac{1}{2}\bigg(\int_{B_{2R}(0)}\bigg|\int_{B_{\rho}(0)}\big[\langle\nabla g(x),y\rangle\I_{B_1(0)}(y)-\langle r_n\nabla g([x]_n),[y]_n\rangle\I_{B_1(0)}([y]_n)\big]\times\\
    &\qquad\qquad\qquad\qquad\qquad\qquad\qquad\qquad \mbox{}\times\big(k_s(x,x+y)-k_s(x,x-y)\big)\,dy\bigg|^{2}dx\bigg)^{\frac{1}{2}}\\
    &\quad\mbox{}+\frac{1}{2}\bigg(\int_{B_{2R}(0)}\bigg|\int_{B_{\rho}(0)}\langle r_n\nabla g([x]_n),[y]_n\rangle\I_{B_1(0)}([y]_n)\times\\
    &\quad\mbox{}\times\big(k_s(x,x+y)-k_s(x,x-y)-n^{d}C^{n}_s([x]_n,[x]_n+[y]_n)+n^{d}C^{n}_s([x]_n,[x]_n-[y]_n)\big) \,dy\bigg|^{2}dx\bigg)^{\frac{1}{2}}\\
    &\leq\frac{1}{2}\bigg(\int_{B_{2R}(0)}\bigg|\int_{B_{\sigma}(0)}\big[\langle\nabla g(x),y\rangle-\langle r_n\nabla g([x]_n),[y]_n\rangle\big]\big(k_s(x,x+y)-k_s(x,x-y)\big)\,dy\bigg|^{2}dx\bigg)^{\frac{1}{2}}\\
    &\quad\mbox{} + \rho \|\nabla g\|_\infty \bigg(\int_{B_{2R}(0)}\bigg(\int_{B_{\rho}(0)\setminus B_{\sigma}(0)} \big|k_s(x,x+y)-k_s(x,x-y)\big| \,dy\bigg)^{2}dx\bigg)^{\frac{1}{2}}\\
    &\quad\mbox{} + \|\nabla g\|_\infty\bigg(\int_{B_{2R}(0)}\bigg(\int_{B_{\sigma}(0)}|y|\times\\
    &\quad\mbox{}\times \big|k_s(x,x+y)-k_s(x,x-y)-n^{d}C^{n}_s([x]_n,[x]_n+[y]_n)+n^{d}C^{n}_s([x]_n,[x]_n-[y]_n)\big| \,dy\bigg)^{2} dx\bigg)^{\frac{1}{2}}\\
    &\quad\mbox{}+\frac{1}{2} \rho \|\nabla g\|_\infty\bigg(\int_{B_{2R}(0)}\bigg(\int_{B_{\rho}(0)\setminus B_{\sigma}(0)}\\
    &\quad\mbox{}\times \big|k_s(x,x+y)-k_s(x,x-y)-n^{d}C^{n}_s([x]_n,[x]_n+[y]_n)+n^{d}C^{n}_s([x]_n,[x]_n-[y]_n)\big| \,dy\bigg)^{2} dx\bigg)^{\frac{1}{2}},
\end{align*}%
\end{small}%
which, by  monotone and dominated convergence,  Taylor's theorem, \eqref{c7.h} (iii), \eqref{c8.h} and \eqref{c11.h}, implies  that $\|A^{n}_3-A_3\|_{L^{2}}\to 0$. Next,
\begin{align*}
&\|A^{n}_4-A_4\|_{L^{2}}\\
    &\leq\bigg(\int_{B_{2R}(0)}\bigg|\int_{B_{\rho}(0)} \big[g(x+y)-g(x)-r_ng([x]_n+[y]_n)+r_ng([x]_n)\big] k_a(x,x+y)\,dy\bigg|^{2} dx\bigg)^{\frac{1}{2}}\\
    &\qquad\mbox{}+\bigg(\int_{B_{2R}(0)}\bigg|\int_{B_{\rho}(0)} \big[r_ng([x]_n+[y]_n)-r_ng([x]_n)\big]\times\\
    &\qquad\qquad\qquad\qquad\qquad\qquad\mbox{}\times\big(k_a(x,x+y)-n^{d}C^{n}_a([x]_n,[x]_n+[y]_n)\big) \,dy\bigg|^{2} dx\bigg)^{\frac{1}{2}}\\
&\leq\bigg(\int_{B_{2R}(0)}\bigg|\int_{B_{\rho}(0)} \big[g(x+y)-g(x)-r_ng([x]_n+[y]_n)+r_ng([x]_n)\big] k_a(x,x+y)\,dy\bigg|^{2} dx\bigg)^{\frac{1}{2}}\\
    &\qquad\mbox{}+2\|\nabla g\|_\infty \bigg(\int_{B_{2R}(0)}\bigg(\int_{B_{\rho}(0)} |y| \big|k_a(x,x+y)-n^{d}C^{n}_a([x]_n,[x]_n+[y]_n)\big| \,dy\bigg)^{2} dx\bigg)^{\frac{1}{2}}.
\end{align*}
Now, by  monotone and dominated convergence,  Taylor's theorem, \eqref{c7.h} (iv) and \eqref{c12.h}, we see $\|A^{n}_4-A_4\|_{L^{2}}\to 0$. Finally,
\begin{align*}
&\|A^{n}_5-A_5\|_{L^{2}}\\
    &\leq\bigg(\int_{B_{2R}(0)}\bigg|\int_{B^{c}_{\rho})(0)} \big[g(x+y)-g(x)-r_ng([x]_n+[y]_n)+r_ng([x]_n)\big] k_a(x,x+y)\,dy\bigg|^{2} dx\bigg)^{\frac{1}{2}}\\
    &\qquad\mbox{}+ \bigg(\int_{B^{c}_{2R}(0)}\bigg|\int_{B^{c}_{\rho}(0)} \big[g(x+y)-r_ng([x]_n+[y]_n)\big] k_a(x,x+y) \,dy\bigg|^{2} dx\bigg)^{\frac{1}{2}}\\
    &\qquad\mbox{}+\bigg(\int_{B_{2R}(0)}\bigg|\int_{B^{c}_{\rho}(0)} \big[r_ng([x]_n+[y]_n)-r_ng([x]_n)\big]\times\\
    &\qquad\qquad\qquad\qquad\qquad\mbox{}\times\big(k_a(x,x+y)-n^{d}C^{n}_a([x]_n,[x]_n+[y]_n)\big)\, dy\bigg|^{2} dx\bigg)^{\frac{1}{2}}\\
    &\qquad\mbox{}+\bigg(\int_{B^{c}_{2R}(0)}\bigg|\int_{B^{c}_{\rho}(0)}r_ng([x]_n+[y]_n) \big(k_a(x,x+y)-n^{d}C^{n}_a([x]_n,[x]_n+[y]_n)\big) \,dy\bigg|^{2} dx\bigg)^{\frac{1}{2}}\\
&\leq\bigg(\int_{B_{2R}(0)}\bigg|\int_{B^{c}_{\rho}(0)} \big[g(x+y)-g(x)-r_ng([x]_n+[y]_n)+r_ng([x]_n)\big] k_a(x,x+y)\,dy\bigg|^{2} dx\bigg)^{\frac{1}{2}}\\
    &\qquad\mbox{} + \bigg(\int_{B^{c}_{2R}(0)}\bigg(\int_{B_R(-x)}|g(x+y)-r_ng([x]_n+[y]_n)||k_a(x,x+y)|\,dy\bigg)^{2}dx\bigg)^{\frac{1}{2}}\\
    &\qquad\mbox{} + 2\|g\|_\infty \bigg(\int_{B_{2R}(0)}\bigg(\int_{B^{c}_{\rho}(0)} \big|k_a(x,x+y)-n^{d}C^{n}_a([x]_n,[x]_n+[y]_n)\big| \,dy\bigg)^2 dx\bigg)^{\frac{1}{2}}\\
    &\qquad\mbox{} + \|g\|_\infty \bigg(\int_{B^{c}_{2R}(0)}\bigg(\int_{B_R(-x)} \big|k_a(x,x+y)-n^{d}C^{n}_a([x]_n,[x]_n+[y]_n)\big| \,dy\bigg)^{2} dx\bigg)^{\frac{1}{2}}.
\end{align*}
By  monotone and dominated convergence,  Taylor's theorem, \eqref{c7.h} (iv), \eqref{c9.h}, \eqref{c12.h} and \eqref{c13.h}, we get  that $\|A^{n}_5-A_5\|_{L^{2}}\to 0$, which concludes the proof.
\end{proof}

The conditions of Theorem~\ref{tm1.9} can be slightly changed to give a further set of sufficient conditions of the convergence of $\{P^{n}_tr_nf\}_{n\geq1}$; the advantage is that we can state these conditions only using $k_s$ and $k$ resp.\ $C_s^n$ and $C^n$, which makes them sometimes easier to check.

\begin{corollary}\label{c1.10}
    Assume that \eqref{c1.h}--\eqref{c6.h} and \eqref{c7.h}\textup{(i)}, \textup{(ii)} hold, that
    \begin{equation}\label{eq-tm1.10}
        x\mapsto\int_{B_1(0)}|y|\big|k(x,x+y)-k(x,x-y)\big|\,dy\in L_{\mathrm{loc}}^{2}(\real^{d},dx)
    \end{equation}
    and that \eqref{c8.h}--\eqref{c11.h} hold with $k_s$ and $C_s^n$ replaced by $k$ and $C^n$, respectively. Then $\{P^{n}_tr_nf\}_{n\geq1}$ converges strongly to $P_tf$ for all $t\geq0$ and $f\in L^{2}(\real^{d},dx)$.
\end{corollary}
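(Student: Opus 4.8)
The plan is to rerun the proof of Theorem~\ref{tm1.9}, changing only the treatment of part~(ii) of the abstract criterion. The hypotheses \eqref{c1.h}--\eqref{c6.h} are literally the same, so part~(i) of that proof -- the Mosco-type lower bound $\mathcal{E}(f,f)\le\liminf_{n\to\infty}\mathcal{E}^n(f_n,f_n)$ obtained from \cite[Theorem~4.6]{Chen-Kim-Kumagai-2013} together with \eqref{c2.h} -- holds verbatim. By \cite[Theorem~2.41 and Remark~2.44]{Tolle-Thesis-2006} it therefore again suffices to establish part~(ii), namely that $\{\mathcal{A}^n r_n g\}_{n\ge1}$ converges strongly to $\mathcal{A}g$ for every $g\in C_c^2(\real^d)$, where now the hypotheses on the kernel are the $k$-based ones.

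The key observation is the elementary symmetrization identity
$$
    \sum_{|b|\le1}\langle r_n\nabla g(a),b\rangle\,C^n(a,a+b)
    = \tfrac12\sum_{|b|\le1}\langle r_n\nabla g(a),b\rangle\big(C^n(a,a+b)-C^n(a,a-b)\big),
$$
which holds for the \emph{full} kernel $C^n$ (not merely for its symmetric part $C^n_s$), together with its continuous counterpart obtained from the substitution $y\mapsto-y$ on $B_1(0)$. Using these, both generators may be written in ``$k$-form'':
\begin{align*}
    \mathcal{A}g(x)
    &= \int_{\real^d}\big(g(x+y)-g(x)-\langle\nabla g(x),y\rangle\I_{B_1(0)}(y)\big)\,k(x,x+y)\,dy\\
    &\quad+ \tfrac12\int_{B_1(0)}\langle\nabla g(x),y\rangle\big(k(x,x+y)-k(x,x-y)\big)\,dy,
\end{align*}
and analogously for $\mathcal{A}^n r_n g(a)$ with $k$ replaced by $C^n$. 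Splitting $k=k_s+k_a$ and symmetrizing the antisymmetric contribution before integrating shows that this expression coincides with the Schilling--Wang formula for $\mathcal{A}g$ used in Theorem~\ref{tm1.9} (cf.\ \cite{Schilling-Wang-2015}); the point is that it no longer refers to $k_s$ and $k_a$ separately. This collapses the five-term decomposition $A_1,\dots,A_5$ of that proof into three $k$-based groups: a genuinely second-order term, a large-jump term, and a first-difference drift term.

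From here the estimates mirror those of Theorem~\ref{tm1.9} almost verbatim, with $k_s,C^n_s$ replaced by $k,C^n$ throughout. The second-order part is controlled by \eqref{c7.h}\textup{(i)},\textup{(ii)} together with the discretization error \eqref{c8.h}; the large-jump term uses \eqref{c8.h}--\eqref{c10.h} (in their $k,C^n$ form) exactly as $A_2$ was handled; and the first-difference drift term is estimated using the new hypothesis \eqref{eq-tm1.10} in place of \eqref{c7.h}\textup{(iii)}, together with \eqref{c11.h}. In particular the separate antisymmetric conditions \eqref{c7.h}\textup{(iv)}, \eqref{c12.h} and \eqref{c13.h} are no longer invoked, since the antisymmetric part of the kernel now enters only through the first differences $k(x,x+y)-k(x,x-y)$ and the jump differences $C^n(a,a+b)-C^n(a,a-b)$.

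I expect the delicate point -- and the step that dictates the precise grouping above -- to be the behaviour near the diagonal. Since \eqref{c7.h}\textup{(iv)} has been dropped, there is no control on $\int_{B_1(0)}|y|^2|k_a(x,x+y)|\,dy$, so the literal second-order term $\int\big(g(x+y)-g(x)-\langle\nabla g(x),y\rangle\I_{B_1(0)}\big)k\,dy$ need not converge absolutely against the full kernel. The resolution, which has to be carried out with care, is to keep the genuine $O(|y|^2)$ contribution paired with $k_s$ (controlled by \eqref{c7.h}\textup{(i)}) and to fold the $k_a$ near-diagonal contribution into the first-difference drift term by symmetrizing \emph{before} splitting $k=k_s+k_a$; only then do the absolutely convergent combinations governed by \eqref{eq-tm1.10} and \eqref{c7.h}\textup{(i)},\textup{(ii)} appear. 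Once this bookkeeping is in place, monotone and dominated convergence together with Taylor's theorem yield $\|e_n\mathcal{A}^n r_n g-\mathcal{A}g\|_{L^2}\to0$, and the claim follows exactly as in Theorem~\ref{tm1.9}.
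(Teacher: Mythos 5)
Your overall route is the paper's: keep part (i) of the Mosco/T\"{o}lle reduction unchanged (it only uses \eqref{c1.h}--\eqref{c6.h}), rewrite the generator in ``$k$-form''
\begin{equation*}
\mathcal{A}g(x)=\int_{\rd}\big(g(x+y)-g(x)-\langle\nabla g(x),y\rangle\I_{B_1(0)}(y)\big)k(x,x+y)\,dy+\tfrac12\int_{B_1(0)}\langle\nabla g(x),y\rangle\big(k(x,x+y)-k(x,x-y)\big)\,dy,
\end{equation*}
and then rerun the $A_1,\dots,A_5$ estimates with $k,C^n$ in place of $k_s,C^n_s$, using \eqref{eq-tm1.10} where \eqref{c7.h}(iii) was used and discarding \eqref{c7.h}(iv), \eqref{c12.h} and \eqref{c13.h}. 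This is exactly what the paper does when it says ``from this point onwards we can follow the proof of Theorem \ref{tm1.9}''.

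The gap is in how you justify the $k$-form of the generator. You obtain it by symmetrizing the three-term $k_s$/$k_a$ formula of Theorem \ref{tm1.9} and then re-assembling $k=k_s+k_a$. But that formula --- and with it the identity $H(g,f)=\langle-\mathcal{A}g,f\rangle_{L^2}$ --- is only available under the full set \eqref{c7.h}(i)--(iv) (it is \cite[Theorem 2.2]{Schilling-Wang-2015}); the corollary drops (iii) and (iv) and replaces them by \eqref{eq-tm1.10}, which controls only the combination $k(x,x+y)-k(x,x-y)$ and gives no separate control of $\int_{B_1(0)}|y|\,|k_s(x,x+y)-k_s(x,x-y)|\,dy$ or of $\int(1\wedge|y|)|k_a(x,x+y)|\,dy$ --- the two differences may cancel each other. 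So the intermediate $k_s$/$k_a$ expression you pass through need not be well defined under the stated hypotheses, and the assertion ``$C_c^{2}(\real^d)\subseteq\mathcal{D}_{\mathcal{A}}$ and $H(g,f)=\langle-\mathcal{A}g,f\rangle_{L^2}$ with $\mathcal{A}g$ in $k$-form'' cannot be obtained by algebra from Theorem \ref{tm1.9}; it has to be established afresh. The paper does this by invoking \cite[Theorem 3.1]{Schilling-Wang-2015}, which proves the $k$-form representation directly under \eqref{c7.h}(i),(ii) and a condition of the type \eqref{eq-tm1.10}, together with the observation that the two additional hypotheses of that theorem (the symmetrized version (H3) of \eqref{eq-tm1.10} and the tightness assumption (H5)) are used only for the formal adjoint and may therefore be dropped here. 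Your closing remarks about regrouping the near-diagonal contributions are on target for the convergence estimates $\|A_i^n-A_i\|_{L^2}\to0$, but they do not repair the identification of $\mathcal{A}g$ itself, which is the one genuinely new ingredient the corollary requires.
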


\begin{proof}
    According to \cite[Theorem 3.1]{Schilling-Wang-2015}, the above assumptions imply that the generator $(\mathcal{A},\mathcal{D}_{\mathcal{A}})$ of $(H,\mathcal{F})$ satisfies
    \begin{itemize}
        \item[(i)] $C_c^{\infty}(\real^{d})\subseteq\mathcal{D}_\mathcal{A}$;
        \item[(ii)] for every $g\in C_c^{\infty}(\real^{d})$,
        \begin{equation}\label{eq1.3}\begin{aligned}
        \mathcal{A}g(x)
        &= \int_{\real^{d}} \big(g(x+y)-g(x)-\langle\nabla g(x),y\rangle\I_{B_1(0)}(y)\big) k(x,x+y)\,dy\nonumber\\
        &\quad\mbox{}+ \frac{1}{2}\int_{B_1(0)}\langle\nabla g(x),y\rangle \big(k(x,x+y)-k(x,x-y)\big)\,dy.
        \end{aligned}\end{equation}
    \end{itemize}
     Note that in \cite[Theorem 3.1]{Schilling-Wang-2015} slightly stronger conditions are assumed (namely (H3) which is a symmetrized version of \eqref{eq-tm1.10} and the tightness assumption (H5)), but they are exclusively used to deal with the formal adjoint $A^*$; this follows easily from an inspection of the proofs of \cite[Theorems 2.2 and 3.1]{Schilling-Wang-2015}.

    From this point onwards we can follow the proof of Theorem \ref{tm1.9}.
\end{proof}

    Recall that a set $\mathcal{C}\subseteq \mathcal{D}_{\mathcal{A}}$ is an operator core for $(\mathcal{A},\mathcal{D}_{\mathcal{A}})$ if $\overline{\mathcal{A}|_{\mathcal{C}}}=\mathcal{A}$. If we happen to know that $C_c^{2}(\real^{d})$ is an operator core for $(\mathcal{A},\mathcal{D}_{\mathcal{A}})$, then there is an alternative proof of Theorem~\ref{tm1.9} and its Corollary~\ref{c1.10} based on
    \cite[Theorem 1.6.1]{Ethier-Kurtz-Book-1986}: $\{P^{n}_tr_nf\}_{n\geq1}$ converges strongly to $P_tf$ for all $t\geq0$ and all $f\in L^{2}(\real^{d},dx)$ if (and only if) $\{\mathcal{A}^{n}r_ng\}_{n\geq1}$ converges strongly to $\mathcal{A}g$ for every $g\in C^{2}_c(\real^{d})$.

\subsection{Approximation  of a Given Process}\label{sec24}

We will now show how we can use the results of Sections~\ref{sec21}--\ref{sec23} to approximate a given non-symmetric pure-jump process by a sequence of Markov chains. We assume that $\process{X}$ is of the type described at the beginning of Section~\ref{sec22}; in particular the kernel $k:\real^{d}\times\real^{d}\setminus \diag\to\real$ satisfies \eqref{c1.h}. We are going to construct a sequence of approximating (in the weak sense) Markov chains. Let $0<p\leq1$ and define a family of  kernels  $C^{n,p}:\ZZ_n^{d}\times\ZZ_n^{d}\to[0,\infty)$, $n\in\nat$, by
\begin{equation}\label{eq1.5}
    C^{n,p}(a,b):=
    \begin{cases}
        \displaystyle n^{d}\int_{\bar{a}}\int_{\bar{b}} k(x,y)\,dx\,dy, & |a-b|>\frac{2\sqrt{d}}{n^{p}} \\
        \displaystyle 0, & |a-b|\leq\frac{2\sqrt{d}}{n^{p}}.
    \end{cases}
\end{equation}

\begin{remark}\label{r1.11}
Th family of kernels defined in \eqref{eq1.5} has the following properties:
\begin{itemize}
\item[(i)]  The kernels $C^{n,p}$, $n\in\nat$, automatically satisfy \eqref{t1}.
\item[(ii)] For any increasing sequence $\{n_i\}_{i\in\nat}\subset\nat$ such that the lattices are nested, i.e.\ $\ZZ^{d}_{n_i}\subseteq\ZZ^{d}_{n_{i+1}}$, the conditions \eqref{c5.h}  and \eqref{c6.h}  hold true, cf.~\cite[Theorem 5.4]{Chen-Kim-Kumagai-2013}. This is, in particular, the case for $n_i = 2^i$, $i\in\nat$.
\item[(iii)] Due to \eqref{c1.h} and Lebesgue's differentiation theorem (see \cite[Theorem 3.21]{Folland-Book-1984}),
    we have for (Lebesgue) almost all $(x,y)\in\real^{d}\times\real^{d}\setminus\diag$,
    \begin{gather*}
        \lim_{n\to\infty} n^{2d} \int_{\overline{[x]}_n}\int_{\overline{[y]}_n} k(u,v)\,dv\,du = k(x,y)
    \intertext{and}
        \lim_{n\to\infty}n^{2d} \int_{\overline{[x]}_n}\int_{\overline{[y]}_n} |k(u,v)-k(x,y)|\,dv\,du=0.
    \end{gather*}
\end{itemize}
\end{remark}

Let us check the conditions \eqref{t2}--\eqref{t6}.
\begin{proposition}\label{p1.11}
The conditions \eqref{t2} and \eqref{t3} hold true if
\begin{gather}
\label{t1.d}\tag{\textbf{T1.D}}
    \forall\rho>0\::\: \sup_{x\in\real^{d}}\int_{B^{c}_{\rho}(x)}k(x,y)dy<\infty.
\end{gather}
\end{proposition}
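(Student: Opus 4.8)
The plan is to exploit two structural features of the kernel \eqref{eq1.5}: the half-open cubes $\{\bar b : b\in\ZZ_n^d\}$ tile $\real^d$ up to a Lebesgue-null set, and each has diameter $\sqrt d/n$. The tiling lets me collapse a sum over lattice points into a single integral over a subset of $\real^d$, while the small diameter lets me upgrade the lattice-distance cut-off in \eqref{eq1.5} into a genuine Euclidean-distance constraint that \eqref{t1.d} can control.

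For \eqref{t2} I would fix $n$ and $a\in\ZZ_n^d$ and, using disjointness of the cubes, write
$$\sum_{b\in\ZZ_n^d} C^{n,p}(a,b) = n^d\int_{\bar a}\int_{E_{n,a}} k(x,y)\,dy\,dx, \qquad E_{n,a}:=\bigcup_{|a-b|>2\sqrt d/n^p}\bar b.$$
The decisive estimate is geometric: for $x\in\bar a$ and $y\in\bar b$ with $|a-b|>2\sqrt d/n^p$,
$$|x-y|\geq |a-b|-|x-a|-|y-b|>\frac{2\sqrt d}{n^p}-\frac{\sqrt d}{n}\geq\frac{\sqrt d}{n^p},$$
where the final step uses $n^{-1}\leq n^{-p}$, valid since $0<p\leq1$. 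Hence $E_{n,a}\subseteq B^c_{\sqrt d/n^p}(x)$ for every $x\in\bar a$, and \eqref{t1.d} with $\rho=\sqrt d/n^p$ gives $\sum_{b} C^{n,p}(a,b)\leq n^d|\bar a|\,\sup_x\int_{B^c_{\sqrt d/n^p}(x)}k(x,y)\,dy$. Since $|\bar a|=n^{-d}$, the right-hand side is finite and independent of $a$, which is exactly \eqref{t2}.

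For \eqref{t3} I would fix $\rho>0$ and pass to $n$ large enough that $2\sqrt d/n^p<\rho$ and $\sqrt d/n<\rho/2$. Then every $b$ with $|b|>\rho$ already satisfies $|b|>2\sqrt d/n^p$, so the cut-off in \eqref{eq1.5} is inactive and the same tiling argument gives $\sum_{|b|>\rho} C^{n,p}(a,a+b) = n^d\int_{\bar a}\int_{F}k(x,y)\,dy\,dx$ with $F:=\bigcup_{|b|>\rho}\overline{a+b}$. The same distance estimate now yields $|x-y|>\rho-\sqrt d/n>\rho/2$ on $\bar a\times\overline{a+b}$, so $F\subseteq B^c_{\rho/2}(x)$; applying \eqref{t1.d} with $\rho/2$ bounds everything by $\sup_x\int_{B^c_{\rho/2}(x)}k(x,y)\,dy$, uniformly in $a$ and in all large $n$. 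Taking $\sup_a$ and then $\limsup_n$ delivers \eqref{t3}.

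The only genuinely delicate point is the distance estimate that converts the lattice constraint into the Euclidean-ball constraint; the rest is bookkeeping with the tiling property and a direct appeal to \eqref{t1.d}. The hypothesis $0<p\leq1$ is used precisely once, to ensure the cube diameter $\sqrt d/n$ does not exceed the separation threshold $2\sqrt d/n^p$ --- and I would note that for \eqref{t3} only large $n$ matter, so even a weaker relationship between $p$ and the grid would suffice there.
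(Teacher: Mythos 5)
Your proof is correct and follows essentially the same route as the paper: collapse the lattice sum into an integral over the union of cubes, use the triangle inequality with the cube half-diagonal $\sqrt d/2n$ (and $n^{-1}\leq n^{-p}$) to show that union lies in a Euclidean annulus complement around each $x\in\bar a$, and then invoke \eqref{t1.d}. The only cosmetic difference is that the paper performs the inclusion in two steps (first into $B^c_{\sqrt d/n^p}(a)$, then into $B^c_{\sqrt d/2n^p}(x)$), which changes the radius by a harmless factor of $2$.
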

\begin{proof}
    We will only discuss \eqref{t2} since \eqref{t3} follows in a similar way. Observe that for every $d\in\nat$ and
    $0<p\leq 1$,
    $$
        \bigcup_{|a-b|>2\sqrt{d}/n^{p}}\bar{b}
        \subseteq B^{c}_{\sqrt{d}/n^{p}}(a)
        \subseteq B^{c}_{\sqrt{d}/2n^{p}}(x),
        \quad a\in\ZZ_n^{d},\ x\in\bar{a}.
    $$
    This shows that
    \begin{align*}
        \sup_{a\in\ZZ_n^{d}}\sum_{b\in\ZZ_n^{d}}C^{n,p}(a,b)
        &= n^{d}\sup_{a\in\ZZ_n^{d}}\sum_{|a-b|>2\sqrt{d}/n^{p}} \int_{\bar{a}}\int_{\bar{b}}k(x,y)\,dy\,dx\\
        &\leq n^{d}\sup_{a\in\ZZ_n^{d}}\int_{\bar{a}}\int_{B^{c}_{\sqrt{d}/2n^{p}}(x)}k(x,y)\,dy\,dx\\
        &\leq \sup_{x\in \real^{d}}\int_{B^{c}_{\sqrt{d}/2n^{p}}(x)}k(x,y)\,dy,
    \end{align*}
    which concludes the proof.
\end{proof}
This means that under \eqref{t1.d}, the kernels $C^{n,p}$, $n\in\nat$, define a family of regular Markov  chains  $\process{X^{n}}$, $n\in\nat$. Using the same arguments as above, it is easy to see that \eqref{t4} holds  if
\begin{gather}
    \label{t3.d}\tag{\textbf{T3.D}}
        \lim_{r\to\infty}\sup_{x\in\real^{d}}\int_{B^{c}_{r}(x)}k(x,y)dy=0.
\end{gather}

\begin{proposition}\label{p1.12}
    Assume that \eqref{t1.d} holds. Then the following statements are true.
    \begin{itemize}
    \item[\textup{(i)}] \eqref{t5} will be satisfied if
    \begin{align}\label{t4.d1}\tag{\textbf{T4.D.1}}
        &\text{there is some $\rho>0$  such that for $i=1,\dots, d$}\\
        &\begin{aligned}[t]
        &\limsup_{\varepsilon\downto 0}\sup_{x\in\real^{d}}\bigg|\int_{B_\rho(x)\setminus B_\varepsilon(x)}(y_{i}-x_{i})k(x,y)\,dy\bigg|<\infty,\\
        &\limsup_{\varepsilon\downto 0}\sup_{x\in\real^{d}}\int_{B_{\sqrt{d}\varepsilon^{p}}(x)\setminus B_{\sqrt{d}\varepsilon^{p}-(\sqrt{d}/2)\varepsilon}(x)}|y_i-x_i|k(x,y)\,dx<\infty,\\
        &\limsup_{\varepsilon\downto 0}\varepsilon\sup_{x\in\real^{d}}\int_{B_\rho(x)\setminus B_{\varepsilon^{p}}(x)}k(x,y)\,dy<\infty.
        \end{aligned}\notag
    \end{align}

    \item[\textup{(ii)}] \eqref{t6} will be satisfied if
    \begin{align}\label{t5.d1}\tag{\textbf{T5.D.1}}
        &\text{there is some $\rho>0$  such that for $i,k=1,\dots, d$}\\
        &\begin{aligned}[t]
            &\limsup_{\varepsilon\downto 0}\sup_{x\in\real^{d}}\bigg|\int_{B_\rho(x)\setminus B_\varepsilon(x)}(y_{i}-x_{i})(y_{k}-x_{k})k(x,y)\,dy\bigg|<\infty,\\
            &\limsup_{\varepsilon\downto 0}\sup_{x\in\real^{d}}\int_{B_{\sqrt{d}\varepsilon^{p}}(x)\setminus B_{\sqrt{d}\varepsilon^{p}-(\sqrt{d}/2)\varepsilon}(x)}|y_i-x_i||y_k-x_k|k(x,y)\,dx<\infty,\\
            &\limsup_{\varepsilon\downto 0}\varepsilon\sup_{x\in\real^{d}}\int_{B_\rho(x)\setminus B_{\varepsilon^{p}}(x)}|y_i-x_i|k(x,y)\,dy<\infty.
        \end{aligned}\notag
    \end{align}
    \end{itemize}
\end{proposition}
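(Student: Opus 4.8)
Both parts are proved by the same argument, so the plan is to carry out (i) in detail and then indicate the (purely algebraic) modification needed for (ii). Throughout I write $\varepsilon=1/n$ and use $a=[x]_n$ for $x\in\bar a$. The idea is to turn the discrete expression in \eqref{t5} into an integral against $k$ and then compare it, region by region, with the three integrals in \eqref{t4.d1}. First I would rewrite, using \eqref{eq1.5} and the fact that $C^{n,p}(a,a+b)=0$ for $|b|\le 2\sqrt{d}/n^{p}$,
$$
\sum_{|b|<\rho} b_i\, C^{n,p}(a,a+b) = n^{d}\int_{\bar a}\int_{D_n(x)} ([y]_n-[x]_n)_i\, k(x,y)\,dy\,dx, \qquad D_n(x):=\{y : 2\sqrt{d}/n^{p}<|[y]_n-[x]_n|<\rho\},
$$
since for $y\in\overline{a+b}$ one has $[y]_n-[x]_n=b$ and $\bigcup_{2\sqrt{d}/n^{p}<|b|<\rho}\overline{a+b}=D_n(x)$. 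As $n^{d}\int_{\bar a}(\cdots)\,dx$ is an average over $\bar a$ and $D_n(x)$ depends on $x$ only through $[x]_n=a$, the supremum over $a\in\ZZ_n^{d}$ is dominated by $\sup_{x\in\real^{d}}\bigl|\int_{D_n(x)}([y]_n-[x]_n)_i\,k(x,y)\,dy\bigr|$, and it suffices to bound this uniformly in $n$.

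Second, I would replace the discrete increment $([y]_n-[x]_n)_i$ by the continuous one $(y_i-x_i)$. Since $|([y]_n)_i-y_i|\le 1/2n$ and $|x_i-a_i|\le 1/2n$, we have $|([y]_n-[x]_n)_i-(y_i-x_i)|\le 1/n=\varepsilon$, so this replacement costs at most $\varepsilon\int_{D_n(x)}k(x,y)\,dy$. Because $p\le1$ forces $2\sqrt{d}/n^{p}-\sqrt{d}/n\ge\sqrt{d}/n^{p}\ge\varepsilon^{p}$, one has $D_n(x)\subseteq B_\rho(x)\setminus B_{\varepsilon^{p}}(x)$ up to a thin outer shell, and this error is therefore dominated (modulo a harmless enlargement of $\rho$ absorbed via \eqref{t1.d}) by the third line of \eqref{t4.d1}.

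Third, it remains to bound the main term $\int_{D_n(x)}(y_i-x_i)k(x,y)\,dy$, which I would compare with the annular integral $\int_{B_\rho(x)\setminus B_{2\sqrt{d}/n^{p}}(x)}(y_i-x_i)k(x,y)\,dy$ appearing in the first line of \eqref{t4.d1} (with cut-off $\varepsilon=2\sqrt{d}/n^{p}\downto 0$). The symmetric difference between $D_n(x)$ and this annulus sits in two shells of thickness $O(1/n)$: an inner shell at radius $\approx 2\sqrt{d}/n^{p}$, where the cubes $\overline{a+b}$ straddle the sphere $|b|=2\sqrt{d}/n^{p}$, and an outer shell at radius $\approx\rho$. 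On the inner shell the contribution $\int|y_i-x_i|\,k\,dy$ is exactly of the form controlled by the second line of \eqref{t4.d1}, after rescaling the parameter so that $\sqrt{d}\,\varepsilon^{p}$ matches the cut-off radius; here $p\le1$ is what keeps the prescribed thickness $(\sqrt{d}/2)\varepsilon$ at least as large as the cube diameter $\sqrt{d}/n$. On the outer shell $|y_i-x_i|\le\rho+O(1/n)$ is bounded while $\int_{\text{shell}}k\le\sup_x\int_{B^{c}_{\rho/2}(x)}k<\infty$ by \eqref{t1.d}. Collecting the three bounds shows that the quantity in \eqref{t5} stays bounded as $n\to\infty$.

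For part (ii) the only change is the elementary identity $b_ib_k-(y_i-x_i)(y_k-x_k)=b_i(b_k-(y_k-x_k))+(b_i-(y_i-x_i))(y_k-x_k)$, which shows that the increment-replacement error now carries an extra weight $|y_i-x_i|$ (resp.\ $|y_k-x_k|$); this is precisely why the first two lines of \eqref{t5.d1} differ from \eqref{t4.d1} by the factor $|y_i-x_i||y_k-x_k|$ and the third line by $|y_i-x_i|$, and the same region-by-region comparison goes through verbatim. I expect the main obstacle to be the bookkeeping of the two boundary shells in the third step: one must line up the precise cut-off radii and shell thicknesses ($2\sqrt{d}/n^{p}$, $\sqrt{d}\,\varepsilon^{p}$, $(\sqrt{d}/2)\varepsilon$) that are built into \eqref{t4.d1}/\eqref{t5.d1} with the actual geometry of the cubes, and it is exactly at this point that the hypothesis $p\le1$ (for the inner shell) and \eqref{t1.d} (for the outer shell) are used.
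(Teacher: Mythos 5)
Your argument is correct and follows essentially the same route as the paper's proof: rewrite the sum as $n^{d}\int_{\bar a}\int(\cdots)k(x,y)\,dy\,dx$, replace $b_i$ by $y_i-x_i$ at a cost of $O(1/n)\int k$ controlled by the third line of \eqref{t4.d1} together with \eqref{t1.d}, and compare the union of cubes with an annulus whose main part is the first line of \eqref{t4.d1} and whose inner and outer boundary discrepancies are absorbed by the second line of \eqref{t4.d1} and by \eqref{t1.d}, respectively. The only real divergence is your choice of inner radius $2\sqrt{d}/n^{p}$ for the comparison annulus (the paper takes $\sqrt{d}/n^{p}-\sqrt{d}/2n$, so that the second line of \eqref{t4.d1} applies verbatim with $\varepsilon=1/n$); with your choice the inner discrepancy shell has outer radius about $2\sqrt{d}/n^{p}+\sqrt{d}/n$ and thickness $2\sqrt{d}/n$, which is not literally of the form $B_{\sqrt{d}\varepsilon^{p}}(x)\setminus B_{\sqrt{d}\varepsilon^{p}-(\sqrt{d}/2)\varepsilon}(x)$, so the ``rescaling'' you invoke really amounts to covering that shell by a bounded number of shells of the prescribed form with $\varepsilon\asymp 1/n$ --- a harmless but genuine extra step that you correctly flag as the main bookkeeping point.
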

\begin{proof}
We will only discuss \eqref{t5}, since \eqref{t6} follows in an analogous way.  Assume \eqref{t4.d1}. We have
\begin{align*}
\sup_{a\in\ZZ_n^{d}}\bigg|\sum_{|b|<\rho}b_{i}\,C^{n,p}(a,a+b)\bigg|
    &=n^{d}\sup_{a\in\ZZ_n^{d}}\bigg|\sum_{2\sqrt{d}/n^{p}<|b|<\rho}b_{i}\int_{\bar{a}}\int_{\overline{a+b}}k(x,y)\,dy\,dx\bigg|\\
    &\leq n^{d}\sup_{a\in\ZZ_n^{d}}\bigg|\sum_{2\sqrt{d}/n^{p}<|b|<\rho}\int_{\bar{a}}\int_{\overline{a+b}}(y_i-x_i)k(x,y)\,dy\,dx\bigg|\\
    &\qquad\mbox{}+n^{d}\sup_{a\in\ZZ_n^{d}}\sum_{2\sqrt{d}/n^{p}<|b|<\rho}\int_{\bar{a}}\int_{\overline{a+b}}|b_i-y_i+x_i|k(x,y)\,dy\,dx\\
    &\leq n^{d}\sup_{a\in\ZZ_n^{d}}\bigg|\sum_{2\sqrt{d}/n^{p}<|b|<\rho}\int_{\bar{a}}\int_{\overline{a+b}}(y_i-x_i)k(x,y)\,dy\,dx\bigg|\\
    &\qquad\mbox{}+\sqrt{d}n^{d-1}\sup_{a\in\ZZ_n^{d}}\sum_{2\sqrt{d}/n^{p}<|b|<\rho}\int_{\bar{a}}\int_{\overline{a+b}}k(x,y)\,dy\,dx.
\end{align*}
Next,  for every $d\in\nat$, $0<p\leq1$ and all $a\in\ZZ_n^{d}$, $x\in\bar{a}$ we have
$$
    \bigcup_{2\sqrt{d}/n^{p}<|b|<\rho}\overline{a+b}
    \subseteq B_{\rho+\sqrt{d}/2n}(a)\setminus B_{\sqrt{d}/n^{p}}(a)
    \subseteq B_{\rho+\sqrt{d}/n}(x)\setminus B_{\sqrt{d}/n^{p}-\sqrt{d}/2n}(x).
$$
Thus,
\begin{align*}
&\sup_{a\in\ZZ_n^{d}}\bigg|\sum_{|b|<\rho}b_{i}\,C^{n,p}(a,a+b)\bigg|\\
    &\quad\leq n^{d}\sup_{a\in\ZZ_n^{d}}\bigg|\int_{\bar{a}}\int_{B_{\rho+\sqrt{d}/n}(x)\setminus B_{\sqrt{d}/n^{p}-\sqrt{d}/2n}(x)}(y_i-x_i)k(x,y)\,dy\,dx\bigg|\\
    &\quad\qquad\mbox{}+ n^{d}\sup_{a\in\ZZ_n^{d}}\bigg|\int_{\bar{a}}\bigg(\int_{B_{\rho+\sqrt{d}/n}(x)\setminus B_{\sqrt{d}/n^{p}-\sqrt{d}/2n}(x)}(y_i-x_i)k(x,y)\,dy\\
    &\quad\qquad\qquad\qquad\qquad\qquad\qquad\mbox{}-\sum_{2\sqrt{d}/n^{p}<|b|<\rho}\int_{\overline{a+b}}(y_i-x_i)k(x,y)\,dy\bigg)\,dx\bigg|\\
    &\quad\qquad\mbox{}+\sqrt{d}n^{d-1}\sup_{a\in\ZZ_n^{d}}\int_{\bar{a}}\int_{B_{\rho+\sqrt{d}/n}(x)\setminus B_{\sqrt{d}/n^{p}-\sqrt{d}/2n}(x)}k(x,y)\,dy\,dx
\\
    &\quad\leq \sup_{x\in\real^{d}}\bigg|\int_{B_{\rho+\sqrt{d}/n}(x)\setminus B_{\sqrt{d}/n^{p}-\sqrt{d}/2n}(x)}(y_i-x_i)k(x,y)\,dy\bigg|\\
    &\quad\qquad\mbox{}+ \sup_{x\in\real^{d}}\int_{B_{\rho+\sqrt{d}/n}(x)\setminus  B_{\rho-\sqrt{d}/n}(x)} |y_i-x_i|k(x,y)\,dy\\
    &\quad\qquad\mbox{}+ \sup_{x\in\real^{d}}\int_{ B_{\sqrt{d}/n^{p}}(x)\setminus B_{\sqrt{d}/n^{p}-\sqrt{d}/2n}(x)}|y_i-x_i|k(x,y)\,dy\\
    &\quad\qquad\mbox{}+\frac{\sqrt{d}}{n}\sup_{x\in\real^{d}}\int_{B_{\rho+\sqrt{d}/n}(x)\setminus B_{\sqrt{d}/n^{p}-\sqrt{d}/2n}(x)}k(x,y)\,dy
\\
    &\quad\leq \sup_{x\in\real^{d}}\bigg|\int_{B_{\rho+\sqrt{d}/n}(x)\setminus B_{\sqrt{d}/n^{p}-\sqrt{d}/2n}(x)}(y_i-x_i)k(x,y)\,dy\bigg|\\
    &\quad\qquad\mbox{}+ \bigg(\rho+\frac{\sqrt{d}}{n}\bigg)\sup_{x\in\real^{d}}\int_{B_{\rho+\sqrt{d}/n}(x)\setminus B_{\rho-\sqrt{d}/n}(x)}k(x,y)\,dy\\
    &\quad\qquad\mbox{}+\sup_{x\in\real^{d}}\int_{B_{\sqrt{d}/n^{p}}(x)\setminus B_{\sqrt{d}/n^{p}-\sqrt{d}/2n}(x)}|y_i-x_i|k(x,y)\,dx\\
    &\quad\qquad\mbox{}+\frac{\sqrt{d}}{n}\sup_{x\in\real^{d}}\int_{B_{\rho+\sqrt{d}/n}(x)\setminus B_{\sqrt{d}/n^{p}-\sqrt{d}/2n}(x)}k(x,y)\,dy.
\end{align*}
Together with \eqref{t1.d} and \eqref{t4.d1} this proves the claim.
\end{proof}

Let us now discuss the conditions \eqref{c2.h}--\eqref{c6.h}.

\begin{proposition}\label{p1.13}
\begin{itemize}
\item[\textup{(i)}]
    The condition \eqref{c1.h} implies
    $
        \limsup_{n\to\infty}\alpha_0^{n}\leq\alpha_0.
    $
\item[\textup{(ii)}]
    Assume that there exist open balls $B_1,B_2\subseteq\real^{d}$ such that $\sup_{(x,y)\in B_1\times B_2}k_s(x,y)<\infty$, $k_s(x,y)>0$ Lebesgue-a.e.\ on $B_1\times B_2$, and $\inf_{(x,y)\in B_1\times B_2}|k_a(x,y)|>0$. Then,
    $
        \liminf_{n\to\infty}\alpha_0^{n}>0.
    $
\end{itemize}
In particular, the assumptions in \textup{(i)} and \textup{(ii)} guarantee that \eqref{c2.h} holds true.
\end{proposition}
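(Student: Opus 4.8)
The plan is to express the discrete symmetric and antisymmetric conductances through the kernels $k_s,k_a$ and then treat the two estimates separately. For $|a-b|>2\sqrt d/n^{p}$ the definition \eqref{eq1.5}, together with $k_a(x,y)=-k_a(y,x)$, gives
\begin{equation*}
    C^{n,p}_s(a,b)=n^{d}\int_{\bar a}\int_{\bar b}k_s(x,y)\,dy\,dx,
    \qquad
    C^{n,p}_a(a,b)=n^{d}\int_{\bar a}\int_{\bar b}k_a(x,y)\,dy\,dx,
\end{equation*}
while both vanish for $|a-b|\le 2\sqrt d/n^{p}$. Everything then reduces to comparing cube averages.

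For (i) I would apply the Cauchy--Schwarz inequality on each product cube $\bar a\times\bar b$. Since \eqref{c1.h} forces $\alpha_0<\infty$, we have $k_a=0$ a.e.\ on $\{k_s=0\}$, so
\begin{equation*}
    \Big(\int_{\bar a}\int_{\bar b}k_a\,dy\,dx\Big)^{2}
    \le\int_{\bar a}\int_{\bar b}\frac{k_a^{2}}{k_s}\,dy\,dx\cdot\int_{\bar a}\int_{\bar b}k_s\,dy\,dx,
\end{equation*}
the first factor taken over $\{k_s>0\}$. Dividing by $C^{n,p}_s(a,b)=n^{d}\iint k_s$ yields the pointwise bound $C^{n,p}_a(a,b)^{2}/C^{n,p}_s(a,b)\le n^{d}\iint_{\bar a\times\bar b}k_a^{2}/k_s$. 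Summing over $b$ collapses the inner integrals to one integral over $\rd$, and then
\begin{equation*}
    n^{d}\int_{\bar a}\bigg(\int_{\{k_s(x,\cdot)\ne0\}}\frac{k_a(x,y)^{2}}{k_s(x,y)}\,dy\bigg)dx
    \le n^{d}\,|\bar a|\,\alpha_0=\alpha_0,
\end{equation*}
because $|\bar a|=n^{-d}$. Taking the supremum over $a$ gives the uniform estimate $\alpha^{n}_0\le\alpha_0$ for every $n$, which is even stronger than the claimed $\limsup$.

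For (ii) the idea is to bound the supremum defining $\alpha^{n}_0$ from below by an average over all cubes $\bar a\subseteq B_1$; there are at most $n^{d}|B_1|$ of them, so
\begin{equation*}
    \alpha^{n}_0\ge\frac{1}{n^{d}|B_1|}\sum_{\bar a\subseteq B_1}\ \sum_{\substack{\bar b\subseteq B_2\\ |a-b|>2\sqrt d/n^{p}}}\frac{C^{n,p}_a(a,b)^{2}}{C^{n,p}_s(a,b)}.
\end{equation*}
With $M:=\sup_{B_1\times B_2}k_s<\infty$ we get $C^{n,p}_s(a,b)\le Mn^{-d}$, hence each summand is at least $(n^{3d}/M)(\iint_{\bar a\times\bar b}k_a)^{2}$. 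Writing $E_nk_a$ for the cube-product average of $k_a$ (so that $\iint_{\bar a\times\bar b}k_a=n^{-2d}E_nk_a|_{\bar a\times\bar b}$), the whole double sum is bounded below by $(n^{d}/M)\int(E_nk_a)^{2}$ over the admissible region. Now $k_a\in L^{2}(B_1\times B_2)$, since $k_a^{2}\le Mk_a^{2}/k_s$ and $\int k_a^{2}/k_s\le\alpha_0$; consequently $E_nk_a\to k_a$ in $L^{2}(B_1\times B_2)$ and $\int(E_nk_a)^{2}\to\int_{B_1\times B_2}k_a^{2}\ge c^{2}|B_1||B_2|$, where $c:=\inf_{B_1\times B_2}|k_a|>0$. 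Dividing by $n^{d}|B_1|$ leaves $\liminf_{n}\alpha^{n}_0\ge c^{2}|B_2|/M>0$.

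The main obstacle is the possible sign cancellation of $k_a$ inside a single cube: an individual term $C^{n,p}_a(a,b)^{2}/C^{n,p}_s(a,b)$ is only of order $n^{-d}$ and tends to $0$, so no ``one cube'' estimate can work. Passing to $\|E_nk_a\|_{L^{2}}$ is what repairs this, because although $\|E_nk_a\|_{L^{2}}\le\|k_a\|_{L^{2}}$ always holds, one in fact has \emph{convergence} $\|E_nk_a\|_{L^{2}}\to\|k_a\|_{L^{2}}$, so no mass is lost in the limit. Two routine points remain to be verified: that the near-diagonal cubes discarded by the constraint $|a-b|>2\sqrt d/n^{p}$ contribute negligibly (their total measure shrinks to the measure-zero diagonal and $\{(E_nk_a)^{2}\}$ is uniformly integrable), and that the interior cubes $\bar a\subseteq B_1$, $\bar b\subseteq B_2$ exhaust $B_1\times B_2$ in the limit; both follow from the $L^{2}$-convergence of $E_nk_a$ and dominated convergence. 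Combining (i) and (ii) then establishes \eqref{c2.h}.
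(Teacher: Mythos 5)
Your part (i) is essentially the paper's argument verbatim: Cauchy--Schwarz on each product cube $\bar a\times\bar b$ (restricted to $\{k_s\neq0\}$), division by $C^{n,p}_s(a,b)$, summation over $b$, and the normalisation $|\bar a|=n^{-d}$, yielding the uniform bound $\alpha_0^n\leq\alpha_0$. One small repair: the fact that $k_a=0$ a.e.\ on $\{k_s=0\}$ does not follow from $\alpha_0<\infty$ (that integral is taken only over $\{k_s\neq0\}$); it follows from $k\geq0$, since $k_s(x,y)=0$ forces $k(x,y)=k(y,x)=0$ and hence $k_a(x,y)=0$. Part (ii) is where you genuinely diverge. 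The paper argues per cube: from $C^{n,p}_s(a,b)\leq Mn^{-d}$ and $\inf_{B_1\times B_2}|k_a|=m>0$ it asserts $C^{n,p}_a(a,b)^2/C^{n,p}_s(a,b)\geq (m^2/M)\,n^{-d}$ and sums over the $\asymp n^d|B_2|$ cubes $\bar b\subseteq B_2$; this per-cube lower bound tacitly assumes $k_a$ does not change sign inside a single $\bar a\times\bar b$ (automatic if $k_a$ is continuous on the connected set $B_1\times B_2$, but not guaranteed for a merely measurable $k_a$ with $|k_a|\geq m$). Your averaging argument --- bounding the supremum over $a$ below by the mean over $\bar a\subseteq B_1$ and identifying the resulting double sum with $(n^d/M)\|E_nk_a\|_{L^2}^2$, then using $E_nk_a\to k_a$ in $L^2(B_1\times B_2)$ --- is immune to this cancellation issue and recovers the same limiting constant $m^2|B_2|/M$. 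The price is the extra (routine but necessary) bookkeeping you flag: $k_a\in L^2(B_1\times B_2)$ (which also follows simply from $|k_a|\leq k_s\leq M$ on a bounded set), uniform integrability of $(E_nk_a)^2$ to discard the near-diagonal and boundary cubes, and the $L^2$-convergence of the cube averages. Both proofs conclude \eqref{c2.h} identically by combining the two bounds. In short: same proof for (i); for (ii) a slightly longer but more robust argument that closes a gap the paper's one-line estimate leaves open for discontinuous $k_a$.
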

\begin{proof}
(i)
For $n\in\nat$, $0<p\leq1$ and $a,b\in\ZZ_n^{d}$ such that $C^{n,p}_s(a,b)\neq0$ the Cauchy--Schwarz inequality gives
\begin{align*}
    \bigg(\int_{\bar{a}}\int_{\bar{b}}|k_a(x,y)|dydx\bigg)^{2}
    &= \bigg(\int_{\bar{a}}\int_{\bar{b}}\I_{\{k_s\neq 0\}}(x,y) 
    \frac{|k_a(x,y)|}{\sqrt{k_s(x,y)}}\sqrt{k_s(x,y)}\,dy\,dx\bigg)^{2}\\
    &\leq \int_{\bar{a}}\int_{\bar{b}} \I_{\{k_s\neq 0\}}(x,y) 
    \frac{|k_a(x,y)|^{2}}{k_s(x,y)} \,dy\,dx\int_{\bar{a}}\int_{\bar{b}}k_s(x,y)\,dy\,dx,
\end{align*}
i.e.
$$
    \frac{C^{n,p}_a(a,b)^{2}}{C^{n,p}_s(a,b)}
    =\frac{\Big(n^{d}\int_{\bar{a}}\int_{\bar{b}}|k_a(x,y)|\,dy\,dx\Big)^{2}}{n^{d}\int_{\bar{a}}\int_{\bar{b}}k_s(x,y)\,dy\,dx}
    \leq n^{d}\int_{\bar{a}}\int_{\bar{b}} \I_{\{k_s\neq 0\}}(x,y) 
    \frac{|k_a(x,y)|^{2}}{k_s(x,y)}\,dy\,dx.
$$
Consequently,
\begin{align*}
    \alpha_0^{n}
    =\sup_{a\in\ZZ_n^{d}}\sum_{\substack{b\in\ZZ_n^{d}\\C^{n,p}_s(a,b)\neq0}}\frac{C^{n,p}_a(a,b)^{2}}{C^{n,p}_s(a,b)}
    &\leq\sup_{a\in\ZZ_n^{d}}\sup_{x\in\real^{d}}\sum_{\substack{b\in\ZZ_n^{d}\\C^{n,p}_s(a,b)\neq0}}
    \int_{\bar{b}} \I_{\{k_s\neq 0\}}(x,y) 
    \frac{|k_a(x,y)|^{2}}{k_s(x,y)}\,dy\\
    &\leq\sup_{x\in\real^{d}}\int_{\{y\in\real^{d}:\, k_s(x,y)\neq0\}}\frac{|k_a(x,y)|^{2}}{k_s(x,y)}\,dy
    =\alpha_0.
\end{align*}

\medskip
(ii) Let
$\displaystyle
    m := \inf_{(x,y)\in B_1\times B_2}|k_a(x,y)|
    \leq \sup_{(x,y)\in B_1\times B_2}k_s(x,y)
    =:M
$. For all $n\in\nat$ large enough, we have
\begin{gather*}
    \alpha_0^{n}
    =\sup_{a\in\ZZ_n^{d}}\sum_{\substack{b\in\ZZ_n^{d}\\C^{n,p}_s(a,b)\neq0}}\frac{C^{n,p}_a(a,b)^{2}}{C^{n,p}_s(a,b)}
    \geq\sup_{\substack{a\in\ZZ_n^{d}\\\bar{a}\subseteq B_1}}\sum_{\substack{b\in\ZZ_n^{d}\\\bar{b}\subseteq B_2}}\frac{C^{n,p}_a(a,b)^{2}}{C^{n,p}_s(a,b)}
    \geq \frac{m^{2}}{M} \sum_{\substack{b\in\ZZ_n^{d}\\\bar{b}\subseteq B_2}}n^{-d}.
\qedhere
\end{gather*}
\end{proof}

\begin{proposition}\label{p1.14}
    \eqref{c3.h} implies \eqref{c4.h}  for $C^{n,p}$ given by \eqref{eq1.5}.
\end{proposition}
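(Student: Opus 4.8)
The plan is to reduce the discrete expression in \eqref{c4.h} to the continuous integral appearing in \eqref{c3.h}, exploiting the fact that on the cubes that matter the lattice distance $|b|$ and the Euclidean distance $|y-x|$ are comparable. First I would record the symmetric part of the conductance. Since the cut-off $|a-b|>2\sqrt{d}/n^{p}$ is symmetric in $a,b$ and the construction \eqref{eq1.5} merely averages $k$, we have
\[
    C^{n,p}_s(a,a+b)=n^{d}\int_{\bar a}\int_{\overline{a+b}}k_s(x,y)\,dy\,dx
    \quad\text{for }|b|>\tfrac{2\sqrt{d}}{n^{p}},
\]
and $C^{n,p}_s(a,a+b)=0$ otherwise, so that the sum in \eqref{c4.h} equals $n^{d}\sum_{|b|>2\sqrt{d}/n^{p}}(1\wedge|b|^{2})\int_{\bar a}\int_{\overline{a+b}}k_s(x,y)\,dy\,dx$.

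The crux is the geometric comparison. For $x\in\bar a$ and $y\in\overline{a+b}$ both points lie within distance $\sqrt{d}/2n$ of the respective cube centres, whence $|(y-x)-b|\leq\sqrt{d}/n$. Because $p\leq1$ forces $|b|>2\sqrt{d}/n^{p}\geq 2\sqrt{d}/n$, this discretization error is strictly smaller than $|b|/2$, giving $\tfrac12|b|<|y-x|<\tfrac32|b|$, and in particular $|b|<2|y-x|$. From $|b|<2|y-x|$ a short case distinction (treating $|y-x|\geq1$ and $|y-x|<1$ separately) yields the weight comparison $1\wedge|b|^{2}\leq 4\,(1\wedge|y-x|^{2})$.

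Next I would use disjointness. The cubes $\overline{a+b}$, $b\in\ZZ_n^{d}$, tile $\real^{d}$, so dropping the cut-off and summing over all $b$ only enlarges the expression:
\[
    \sum_{b\in\ZZ_n^{d}}(1\wedge|b|^{2})\,C^{n,p}_s(a,a+b)
    \leq 4\,n^{d}\int_{\bar a}\int_{\real^{d}}(1\wedge|y-x|^{2})k_s(x,y)\,dy\,dx
    = \frac{4}{|\bar a|}\int_{\bar a}\Phi(x)\,dx,
\]
where, after the substitution $y=x+z$, $\Phi(x):=\int_{\real^{d}}(1\wedge|z|^{2})k_s(x,x+z)\,dz$ is exactly the integrand of \eqref{c3.h}. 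Finally I would localise: for $a\in B_\rho(0)$ one has $\bar a\subseteq B_{\rho+\sqrt{d}/2}(0)$, so the average of $\Phi$ over $\bar a$ is dominated by $\sup_{x\in B_{\rho+\sqrt{d}/2}(0)}\Phi(x)$, which is finite by \eqref{c3.h}. Taking $\sup_{a\in B_\rho(0)}$ and then $\limsup_{n\to\infty}$ delivers \eqref{c4.h} with the explicit bound $4\sup_{x\in B_{\rho+\sqrt{d}/2}(0)}\Phi(x)$.

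The main obstacle is the single genuinely non-bookkeeping step, namely the comparison between the lattice distance $|b|$ and the Euclidean distance $|y-x|$: one must verify that the grid error $\sqrt{d}/n$ is negligible relative to $|b|$ on the support of $C^{n,p}_s$, and this is precisely where the hypothesis $p\leq1$ enters, ensuring that the cut-off radius $2\sqrt{d}/n^{p}$ dominates the grid error $\sqrt{d}/n$. Once $|b|\asymp|y-x|$ is secured, replacing $1\wedge|b|^{2}$ by $1\wedge|y-x|^{2}$ and passing from the discrete sum to the integral over $\real^{d}$ are entirely routine.
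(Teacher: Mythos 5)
Your proof is correct and follows essentially the same route as the paper: both arguments rest on the tiling of $\real^d$ by the cubes $\bar b$ and on the comparability $|b|\le 2|y-x|$ for $x\in\bar a$, $y\in\overline{a+b}$ on the support of $C^{n,p}_s$, which reduces the discrete sum to the integral in \eqref{c3.h}. The only (harmless) difference is that you handle the weight in one stroke via $1\wedge|b|^2\le 4(1\wedge|y-x|^2)$, whereas the paper splits the sum at $|a-b|=1$ and treats the two ranges separately.
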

\begin{proof}
For any $\rho>0$, $n\in\nat$ and $0<p\leq1$, we have
$$
    \sup_{a\in B_\rho(0)}\sum_{b\in\ZZ_n^{d}}(1\wedge|a-b|^{2})C^{n,p}_s(a,b)\leq\sup_{a\in B_\rho(0)}\sum_{|a-b|\geq1}C^{n,p}_s(a,b)+\sup_{a\in B_\rho(0)}\sum_{|a-b|<1}|a-b|^{2}C^{n,p}_s(a,b).
$$
Furthermore, since
$$
    \bigcup_{|a-b|\geq1}\bar{b}
    \subseteq B^{c}_{1-\sqrt{d}/2n}(a)
    \subseteq B^{c}_{1-\sqrt{d}/n}(x),
    \quad a\in\ZZ_n^{d},\ x\in\bar{a},
$$
for all $n\in\nat$ with $n>\sqrt{d}$, we have that
\begin{align*}
    \sup_{a\in B_\rho(0)}\sum_{|a-b|\geq1}C^{n,p}_s(a,b)
    &\leq \sup_{a\in B_\rho(0)}n^{d}\int_{\bar{a}}\int_{B^{c}_{1-\sqrt{d}/n}(x)}k_s(x,y)\,dy\,dx\\
    &\leq\sup_{a\in B_\rho(0)}\sup_{x\in\bar{a}}\int_{B^{c}_{1-\sqrt{d}/n}(x)}k_s(x,y)\,dy\\
    &\leq\sup_{x\in B_{\rho+\sqrt{d}/2n}(0)}\int_{B^{c}_{1-\sqrt{d}/n}(0)}k_s(x,x+y)\,dy.
\end{align*}
Next, it is easy to see that
$$
    \bigcup_{|a-b|<1}\bar{b}
    \subseteq B_{1+\sqrt{d}/2n}(a)
    \subseteq B_{1+\sqrt{d}/n}(x),
    \quad a\in\ZZ_n^{d},\ x\in\bar{a},
$$
and
$|a-b|\leq 2|x-y|$ 
for all $a,b\in\ZZ_n^{d}$, $|a-b|>\sqrt{d}/n$, and all $x,y\in\real^{d}$, $x\in\bar{a}$, $y\in\bar{b}$. Thus,
\begin{align*}
    \sup_{a\in B_\rho(0)}\sum_{|a-b|<1}|a-b|^{2}C^{n,p}_s(a,b)
    &=\sup_{a\in B_\rho(0)}\sum_{2\sqrt{d}/n^{p}<|a-b|<1}|a-b|^{2}C^{n,p}_s(a,b)\\
    &\leq 4\sup_{a\in B_\rho(0)}n^{d}\sum_{2\sqrt{d}/n^{p}<|a-b|<1}\int_{\bar{a}}\int_{\bar{b}}|x-y|^{2}k_s(x,y)\,dy\,dx\\
    &\leq4\sup_{a\in B_\rho(0)}n^{d}\int_{\bar{a}}\int_{B_{1+\sqrt{d}/n}(x)}|x-y|^{2}k_s(x,y)\,dy\,dx\\
    &\leq4\sup_{a\in B_\rho(0)}\sup_{x\in\bar{a}}\int_{B_{1+\sqrt{d}/n}(0)}|y|^{2}k_s(x,x+y)\,dy\\
    &\leq4\sup_{a\in B_{\rho+\sqrt{d}/2n}(0)}\int_{B_{1+\sqrt{d}/n}(0)}|y|^{2}k_s(x,x+y)\,dy,
\end{align*}
which proves the assertion.
\end{proof}

We will now discuss some examples where the conditions \eqref{c1.h}--\eqref{c13.h} are satisfied.
\begin{example}[Symmetric jump processes]\label{e1.16}
    Assume that $k(x,y)=k(y,x)$ Lebesgue a.e.\ on $\real^{d}\times\real^{d}$. For $0<p\leq1$ we define the corresponding family of conductances $C^{n,p}$, $n\in\nat$, by \eqref{eq1.5}. If \eqref{t1}--\eqref{t6} hold, then the family of underlying Markov  chains  $\{X^{n}_t\}_{t\geq0}$, $n\in\nat$, is tight. Due to symmetry,  the  second condition in \eqref{c1.h}, \eqref{c7.h} (iii), (iv), \eqref{c12.h} and \eqref{c13.h} are trivially satisfied, and \eqref{c2.h} is not needed. The condition \eqref{c4.h}  follows from \eqref{c3.h}, while \eqref{c5.h}  and \eqref{c6.h} automatically hold true (take, for example, a subsequence $n_i=2^{i}$, $i\in\nat$). For an alternative approach to the problem of discrete approximation of \emph{symmetric} jump processes we refer the readers to \cite{Chen-Kim-Kumagai-2013}.
\end{example}

\begin{example}[Non-symmetric L\'{e}vy processes]\label{e1.17}
    A class of non-symmetric L\'{e}vy processes which satisfy conditions  \eqref{c1.h}--\eqref{c13.h}  can be constructed in the following way. Let $\nu_1(dy)=n_1(y)\,dy$ and $\nu_2(dy)=n_2(y)\,dy$ be L\'{e}vy measures and let $B\subseteq\real^{d}$ be a Borel set. Define a new L\'{e}vy measure $\nu(dy)$ by
    $$
        \nu(dy) := \I_{B}(y)\,\nu_1(dy) + \I_{B^c}(y)\,\nu_d(dy).
    $$
    In general, $\nu(dy)$ is not symmetric and a suitable choice of the densities $n_1(y)$ and $n_2(y)$ ensures \eqref{c1.h}--\eqref{c13.h}. For example, take $n_1(y)=|y|^{-\alpha-d}\I_{B^c_1(0)}(y)$ and $n_2(y):=|y|^{-\beta-d}\I_{B^{c}_1(0)}(y)$, where $\alpha,\beta\in(0,2)$, and $B$ is any Borel set. Obviously, the kernel
    $$
        k(x,y) :=
		|y-x|^{-\alpha-d} \I_{B}(x-y) + |y-x|^{-\beta-d} \I_{B^c}(x-y),\quad x\neq y,
    $$
    satisfies  \eqref{c1.h} and the Dirichlet form $(H,\mathcal{F})$ corresponds to a pure jump  L\'{e}vy process with  L\'{e}vy measure $\nu(dy)$ defined as above. Finally, it is not hard to check that $k(x,y)$ satisfies the conditions in \eqref{c2.h}--\eqref{c13.h} (for a subsequence $n_i=2^{i}$, $i\in\nat$).
\end{example}

\begin{example}[Stable-like processes]\label{e1.18}
    Let $\alpha:\real^{d}\to(0,2)$ be a Borel measurable function. Consider the following integro-differential operator
    \begin{equation}\label{eq1.6}
        Lf(x)
        := \gamma(x)\int_{\real^{d}}\left( f(y+x)-f(x)-\langle\nabla f(x),y\rangle\I_{B_1(0)}(y)\right) \frac{dy}{|y|^{\alpha(x)+d}}
    \end{equation}
    where $f\in C^{\infty}_c(\real^{d})$ and
    $$
        \gamma(x)
        := \alpha(x)2^{\alpha(x)-1}\frac{\Gamma(\alpha(x)/2+d/2)}{\pi^{d/2}\Gamma(1-\alpha(x)/2)}
    $$
    ($\Gamma(x)$ is Euler's Gamma function). It is well known that
    $$
        Lf(x)
        = \int_{\real^{d}}e^{i\langle x,\xi\rangle}|\xi|^{\alpha(x)}\hat{f}(\xi)\,d\xi,\quad f\in C_c^{\infty}(\real^{d}),
    $$
    where $\hat{f}(\xi):= (2\pi)^{-d} \int_{\real^{d}} e^{-i\langle\xi,x\rangle} f(x)\, dx$ denotes the Fourier transform of the function $f$. This shows that $L=-(-\Delta)^{\alpha(x)}$ is a stable-like operator.  If $\alpha$ satisfies a H\"{o}lder condition, \cite{Bass-1988} shows that $L$ generates a unique ``stable-like'' Markov process (in dimension $d=1$), the multivariate case is discussed by \cite{ Hoh-2000, Negoro-1994}
    if $\alpha(x)$ is smooth and, recently, in \cite{Kuhn-thesis} for $\alpha(x)$ satisfying a H\"{o}lder condition. Note that a stable-like process is, in general, non-symmetric.  If $\alpha(x)\equiv\alpha$ is constant, then $L$ generates a rotationally invariant (hence, symmetric) $\alpha$-stable L\'evy process.

    Assume that $0<\underline{\alpha}\leq\alpha(x)\leq\alpha(x)\leq\overline{\alpha}<2$ for all $x\in\real^{d}$, and
    $$
        \int_0^{1}\frac{(\beta(u)|\log u|)^{2}}{u^{1+\overline{\alpha}}}\,du<\infty,
    $$
    where $\beta(u):=\sup_{|x-y|\leq u}|\alpha(x)-\alpha(y)|$. The (non-symmetric) kernel
    $$
        k(x,y):=\gamma(x)|y-x|^{-\alpha(x)-d},\quad x,y\in\real^{d},
    $$
    satisfies \eqref{c1.h} and defines a regular lower bounded semi-Dirichlet form on $L^{2}(\real^{d},dx)$; we call the corresponding Hunt process a \emph{stable-like process}. For $0<p\leq1$, define the corresponding family of conductances $C^{n,p}$, $n\in\nat$, by \eqref{eq1.5}. Clearly, for any $p\in(0,1]$ such that $1/p\geq\overline{\alpha}$, the conductances $C^{n,p}$, $n\in\nat$, satisfy \eqref{t1}--\eqref{t6}. Thus, the family of corresponding Markov chains $\{X^{n}_t\}_{t\geq0}$, $n\in\nat$, is tight. If there exist open balls $B_1,B_2\subseteq\real^{d}$ such that
    $$
        \inf_{(x,y)\in B_1\times B_2}|\alpha(x)-\alpha(y)|>0
    $$
    (this assumption implies \eqref{c2.h}), then it is easy to see with the above assumptions that \eqref{c2.h}--\eqref{c6.h} hold true (for the subsequence $n_i=2^{i}$, $i\in\nat$). It is also very easy to verify the conditions \eqref{c7.h} (i), (ii), \eqref{eq-tm1.10} and \eqref{c9.h} (in the context of Corollary \ref{c1.10}).  On the other hand,  conditions  \eqref{c8.h} and \eqref{c10.h} (again in the context of Corollary \ref{c1.10}) follow directly from  the dominated convergence theorem and Lebesgue's differentiation theorem in order to show $\lim_{n\to\infty}n^{d}C^{n,p}([x]_n,[x]_n+[y]_n)=\gamma(x)|y|^{-\alpha(x)-d}$.  Indeed, for $n\in\nat$, $p\in(0,1]$, $1/p\geq\overline{\alpha}$, $x\in\real^{d}$, $y\in B_{1}(0)$, $|[y]_n|>2\sqrt{d}/n^{p}$, we have
\begin{align*}
    n^{d}&C^{n,p}([x]_n,[x]_n+[y]_n)\\
    &= n^{2d}\int_{\bar{[x]}_n}\int_{[x]_n+\bar{[y]}_n} \frac{\gamma(u)\,dv\,du}{|v-u|^{\alpha(u)+d}}\\
    &= n^{2d}\int_{\bar{[x]}_n}\int_{[x]_n-u+\bar{[y]}_n} \frac{\gamma(u)\,dv\,du}{|v|^{\alpha(u)+d}}\\
    &\leq n^{2d}\,\overline{\gamma}\int_{\bar{[x]}_n}\int_{B_{\sqrt{d}/n}([y]_n)\cap B_1(0)} \frac{dv\, du}{|v|^{\alpha(u)+d}}
    + n^{2d}\,\overline{\gamma}\int_{\bar{[x]}_n}\int_{B_{\sqrt{d}/n}([y]_n)\cap B^{c}_1(0)} \frac{dv\,du}{|v|^{\alpha(u)+d}}\\
    &\leq n^{d}\overline{\gamma} \int_{B_{\sqrt{d}/n}([y]_n)} \frac{dv}{|v|^{\overline{\alpha}+d}}
    + n^{d}\overline{\gamma}\int_{B_{\sqrt{d}/n}([y]_n)} \frac{dv}{|v|^{\underline{\alpha}+d}}\\
    &= \overline{\gamma}\,2^{\overline{\alpha}+d}d^{d/2}\,V_d |y|^{-\overline{\alpha}-d}
    +\overline{\gamma}\,2^{\underline{\alpha}+d}d^{d/2}\,V_d |y|^{-\underline{\alpha}-d}\\
    &\leq\overline{\gamma}\,2^{\overline{\alpha}+d+1}d^{d/2}\,V_d |y|^{-\overline{\alpha}-d},
\end{align*}
where $\overline{\gamma}:=\sup_{x\in\real^{d}}\gamma(x)$  and $V_d$ denotes the volume of the $d$-dimensional unit ball. Similarly, for $n\in\nat$, $p\in(0,1]$, $1/p\geq\overline{\alpha}$, $x\in\real^{d}$, $y\in B^{c}_{1}(0)$, $|[y]_n|>2\sqrt{d}/n^{p}$, we have
$$
    n^{d}C^{n,p}([x]_n,[x]_n+[y]_n)
    \leq\overline{\gamma}\,2^{\overline{\alpha}+d+1}d^{d/2}\,V_d|y|^{-\underline{\alpha}-d}.
$$
The assertion now follows by an application of the dominated convergence  and Lebesgue's differentiation theorems.
Finally, let us verify \eqref{c11.h} (in the context of Corollary \ref{c1.10}). We proceed as follows
\begin{align*}
    &\int_{B_1(0)}|y| \big|k(x,x+y)-k(x,x-y)-n^{d}C^{n,p}([x]_n,[x]_n+[y]_n)+n^{d}C^{n,p}([x]_n,[x]_n-[y]_n)\big| \,dy\\
    &= n^{d}\int_{B_1(0)\setminus B_{\sqrt{d}/n^{p}}(0)} |y|
    \bigg|\int_{\bar{[x]}_n}\int_{\bar{[y]}_n+[x]_n}k(u,v)\,dv\,du
    - \int_{\bar{[x]}_n}\int_{-\bar{[y]}_n+[x]_n}k(u,v)\,dv\,du\bigg|\,dy\\
    &= n^{d}\int_{B_1(0)\setminus B_{\sqrt{d}/n^{p}}(0)}|y|
    \bigg|\int_{\bar{[x]}_n}\int_{\bar{[y]}_n+[x]_n-u}k(u,u+v)\,dv\,du\\
    &\hphantom{n^{d}\int_{B_1(0)\setminus B_{\sqrt{d}/n^{p}}(0)}|y|\bigg|\int_{\bar{[x]}_n}\int_{\bar{[y]}_n+[x]_n-u}}
    - \int_{\bar{[x]}_n}\int_{-\bar{[y]}_n+[x]_n-u} k(u,u+v)\,dv\,du\bigg|\,dy\\
    &= n^{d}\int_{B_1(0)\setminus B_{\sqrt{d}/n^{p}}(0)}|y|
    \int_{\bar{[x]}_n}\int_{\bar{[y]}_n}
    \big|\gamma(u) \big|v+[x]_n-u\big|^{-d-\alpha(u)}\\
    &\hphantom{n^{d}\int_{B_1(0)\setminus B_{\sqrt{d}/n^{p}}(0)}|y|
    \int_{\bar{[x]}_n}\int_{\bar{[y]}_n}
    \big|\gamma(u) \big|v++}
    - \gamma(u)\big|v-[x]_n+u\big|^{-d-\alpha(u)}\big|\,dv\,du\,dy\\
    &\leq 2(d+\underline{\alpha})\,\overline{\gamma}\,n^{d} \int_{B_1(0)\setminus B_{1-\sqrt{d}/2n}(0)}
    |y|\big( |[y]_n|-\sqrt{d}/2n \big)^{-\underline{\alpha}-d-1}\,dy \int_{\bar{[x]}_n}|[x]_n-u|\,du\\
    &\qquad\mbox{} + 2 (d+\overline{\alpha})\,\overline{\gamma}\,n^{d}
    \int_{B_{1-\sqrt{d}/2n}(0)\setminus B_{\sqrt{d}/n^{p}}(0)}|y| \big(|[y]_n|-\sqrt{d}/2n\big)^{-\overline{\alpha}-d-1}\,dy
    \int_{\bar{[x]}_n}|[x]_n-u|\,du\\
    &\leq 2^{\underline{\alpha}+d+2}\,(d+\overline{\alpha})\,\overline{\gamma}\,n^{d}
    \int_{B_1(0)\setminus B_{1-\sqrt{d}/2n}(0)}|y|^{-\underline{\alpha}-d}\,dy \int_{B_{\sqrt{d}/2n}(0)}|u|\,du\\
    &\qquad\mbox{}+ 2^{\overline{\alpha}+d+2}\,(d+\overline{\alpha})\,\overline{\gamma}\,n^{d}
    \int_{B_{1-\sqrt{d}/2n}(0)\setminus B_{\sqrt{d}/n^{p}}(0)}|y|^{-\overline{\alpha}-d}\,dy \int_{B_{\sqrt{d}/2n}(0)}|u|\,du\\
    &= \frac{2^{\underline{\alpha}+1}\, d^{(d+5)/2}\, (d+\overline{\alpha})\, \overline{\gamma}\,V^{2}_d}{\underline{\alpha}\,(d+1)\,n} \left[\bigg(\bigg(1-\frac{\sqrt{d}}{2n}\bigg)^{-\underline{\alpha}}-1\bigg)
    + \bigg(\bigg(\frac{\sqrt{d}}{n^{p}}\bigg)^{-\overline{\alpha}}-\bigg(1-\frac{\sqrt{d}}{2n}\bigg)^{-\overline{\alpha}}\bigg)\right],
%
\end{align*}
where we use Taylor's theorem in the fourth step. Thus, if $1/p>\overline{\alpha}$, \eqref{c11.h} follows directly from the dominated convergence theorem.
\end{example}

\section{Semimartingale approach}\label{s3}

 We can improve the convergence results of the previous section if we know that the limiting process is a semimartingale.    Let $\process{X}$ be the canonical process on the Skorokhod space $\mathbb{D}(\real^{d})$  and set $\mathcal{D}(\real^{d}):=\sigma\{X_t: t\geq0\}$; it is known that $\mathcal{D}(\real^{d})=\mathcal{B}(\mathbb{D}(\real^{d}))$, see \cite[Chapter VI]{Jacod-Shiryaev-2003}. The canonical filtration on the measurable space $(\mathbb{D}(\real^{d}),\mathcal{D}(\real^{d}))$ is given by $\textbf{D}(\real^{d})=\{\mathcal{D}_t(\real^{d})\}_{t\geq0}$,
 $\mathcal{D}_t(\real^{d}):=\bigcap_{s>t}\sigma\{X_u:u\leq s\}$.

Let $b:\real^{d}\to\real^{d}$ be a Borel function and $\nu:\real^{d}\times\mathcal{B}(\real^{d}\setminus\left\lbrace 0\right\rbrace )\to[0,\infty)$ a Borel kernel satisfying   $\int_{\real^{d}\setminus\left\lbrace 0\right\rbrace }(1\wedge|y|^{2})\nu(x,dy)<\infty$ for every $x\in\real^{d}$. Fix a truncation function $h:\real^{d}\to\real^{d}$ (see Section~\ref{sec21} for the definition) and define
\begin{align*}
    B_t                 &:= \int_0^t b(X_s)\,ds,\\
    \tilde{A}^{n,ik}_t  &:= \int_0^t\int_{\real^{d}} h_{i}(y)h_{k}(y)\,\nu(X_s,dy)\,ds, \quad i,k=1,\dots,d,\\
    N(ds,dy)            &:= \nu(X_s,dy)\,ds.
\end{align*}
Finally, assume
\begin{gather}
\label{c1.s}\tag{\textbf{C1.S}}
\parbox[t]{.88\linewidth}{for each $x\in\real^{d}$ there is a unique probability $\mathbb{P}_x(\bullet)$ on $(\mathbb{D}(\real^{d}),\mathcal{D}(\real^{d}))$ such that
    \begin{itemize}
    \item[(i)] $x\mapsto \mathbb{P}_x(B)$ is Borel measurable for all $B\in\mathcal{D}(\real^{d})$;
    \item [(ii)] $\mathbb{P}_x(X_0=x)=1$;
    \item [(iii)] $\process{X}$ is a semimartingale on the stochastic basis $(\mathbb{D}(\real^{d}),\mathcal{D}(\real^{d}),\mathbb{P}_x,\textbf{D}(\real^{d}))$ with modified characteristics $(B,\tilde{A},N)$.
    \end{itemize}
}
\end{gather}
Note that we assume that $X$ is a pure jump semimartingale, i.e.\ the continuous part -- the characteristic $A$ -- vanishes.

\begin{remark}
    Condition \eqref{c1.s} implies that $(\mathbb{D}(\real^{d}),\mathcal{D}(\real^{d}),\{\mathbb{P}_x\}_{x\in\real^{d}}, \textbf{D}(\real^{d}),\process{X})$ is a Markov process with an extended generator, defined for all $f\in C_b^{2}(\real^{d})$ by	
    $$
        \mathcal{A}f(x)=\sum_{i=1}^{d}b_{i}(x)\frac{\partial f(x)}{\partial x_{i}}
        +\int_{\real^{d}}\bigg(f(y+x)-f(x)-\sum_{i=1}^{d}h_{i}(x)\frac{\partial f(x)}{\partial x_{i}}\bigg)\nu(x,dy),
	$$
    see \cite[Remark IX.4.5]{Jacod-Shiryaev-2003}. For L\'evy processes (i.e. for the situation when $b(x)$ and $\nu(x,dy)$ do not depend on $x$), \eqref{c1.s} is trivially satisfied. If
    $$
  	\lim_{r\to\infty}\sup_{x\in\real^{d}}\nu(x,B^{c}_r(0))=0,\quad \sup_{x\in\real^{d}}|b(x)|<\infty,\quad
    	\sup_{x\in\real^{d}}\int_{\real^{d}}(1\wedge|y|^{2})\nu(x,dy)<\infty
    $$
    and \eqref{c2.s} (see below) hold, then there is at least one semimartingale $\process{X}$ with the given characteristics $(B,0,N)$,
    cf.\ \cite[Theorem IX.2.31]{Jacod-Shiryaev-2003}.

Conditions  ensuring  uniqueness of the family $\mathbb{P}_x(\cdot)$, $x\in\real^{d}$, are given in \cite[Theorem III.2.32]{Jacod-Shiryaev-2003}. In the context of Feller (or L\'evy-type) processes a different approach to existence and uniqueness of $\process{X}$ can be found in \cite[Theorem 4.6.7]{Jacob-Book-III-2005} and \cite[Chapter 3]{Bottcher-Schilling-Wang-2013}.
\end{remark}

\subsection{Convergence}\label{sce31}

Let $C^{n}:\ZZ_n^{d}\times\ZZ_n^{d}\to[0,\infty)$, $n\in\nat$, be a family of kernels satisfying \eqref{t1} and \eqref{t2}. As we have already seen in Section \ref{s2}, these kernels define a family of regular Markov semimartingales $\process{X^{n}}$, $n\in\nat$, on $\ZZ_n^{d}$.  If we apply the results of \cite[Theorem IX.4.8]{Jacod-Shiryaev-2003} in our setting, we arrive at the following theorem.
\begin{theorem}\label{tm2.1}
    Let $C^{n}:\ZZ_n^{d}\times\ZZ_n^{d}\to[0,\infty)$ be as before, and assume that \eqref{c1.s} holds as well as
    \begin{gather}
    \label{c2.s}\tag{\textbf{C2.S}}
        \parbox[t]{.88\linewidth}{the functions $b(x)$, $x\mapsto\int_{\real^{d}}h_{i}(y)h_{k}(y)\nu(x,dy)$ and $x\mapsto\int_{\real^{d}}g(y)\nu(x,dy)$ are continuous for all $i,k=1,\dots,d$ and all bounded and continuous functions $g:\real^{d}\to\real$ vanishing in a neighbourhood of the origin};\\
    \label{c3.s}\tag{\textbf{C3.S}}
        \parbox[t]{.88\linewidth}{$\displaystyle\forall R>0\::\:
        \lim_{r\to\infty}\sup_{x\in B_R(0)}\nu(x,B^{c}_r(0))=0;$}\\
    \label{c4.s}\tag{\textbf{C4.S}}
        \parbox[t]{.88\linewidth}{$\forall R>0,\: i=1,\dots, d\::\:$
        $$
            \lim_{n\to\infty}\sup_{x\in B_R(0)}\bigg|\sum_{b\in\ZZ_n^{d}}h_{i}(b)C^{n}([x]_n,[x]_n+b)-b_{i}(x)\bigg|=0;
        $$}\\
    \label{c5.s}\tag{\textbf{C5.S}}
        \parbox[t]{.88\linewidth}{$\forall R>0,\: i=1,\dots, d\::$}\\
    \notag
        \qquad\quad\lim_{n\to\infty}\sup_{x\in B_R(0)} \bigg|\sum_{b\in\ZZ_n^{d}}h_{i}(b)h_{k}(b)C^{n}([x]_n,[x]_n+b)-\int_{\real^{d}}h_{i}(y)h_{k}(y)\,\nu(x,dy)\bigg|=0;\\
    \label{c6.s}\tag{\textbf{C6.S}}
        \parbox[t]{.88\linewidth}{$\forall R>0,\: \forall f\in C_b(\real^d,\real),\text{vanishing in a neighbourhood of $0$:}$
        $$
            \lim_{n\to\infty}\sup_{x\in B_R(0)}\bigg|\sum_{b\in\ZZ_n^{d}}g(b)C^{n}([x]_n,[x]_n+b)-\int_{\real^{d}}g(y)\,\nu(x,dy)\bigg|=0.
        $$}
    \end{gather}
    If the initial distributions of $\process{X^{n}}$, $n\in\nat$, converge weakly to that of $\process{X}$, then
    $\left\{X^{n}_t\right\}_{t\geq0}\xrightarrow[n\to\infty]{d}\process{X}$ in Skorokhod space.
\end{theorem}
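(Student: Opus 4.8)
The plan is to verify the hypotheses of \cite[Theorem IX.4.8]{Jacod-Shiryaev-2003} and then invoke that result verbatim. Recall that this theorem yields weak convergence $\mathbf{X}^{n}\xrightarrow{d}\mathbf{X}$ in Skorokhod space provided that: (a) the limit $\mathbf{X}$ solves a \emph{well-posed} martingale (semimartingale) problem attached to state-dependent characteristics; (b) the initial laws converge; and (c) the (modified) characteristics of the approximating semimartingales converge, locally uniformly in the state variable, to those of $\mathbf{X}$. I would therefore organise the argument around matching the three characteristics of $\mathbf{X}^{n}$ — already computed from the generator $\mathcal{A}^{n}$ in Section~\ref{sec21} — against the prescribed modified characteristics $(B,\tilde{A},N)$ of $\mathbf{X}$.

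First I would record that, by the formulas for $B^{n}(h)$, $\tilde{A}^{n}(h)$ and $N^{n}$ obtained earlier, the characteristics of $\mathbf{X}^{n}$ are of the integral form $\int_0^t\beta_n(X^{n}_s)\,ds$, $\int_0^t\tilde{c}_n(X^{n}_s)\,ds$ and $\nu_n(X^{n}_s,dy)\,ds$, where the coefficient functions, transported to all of $\real^{d}$ via the discretisation $x\mapsto[x]_n$, are
$$
    \beta_n^{i}(x)=\sum_{b\in\ZZ_n^{d}}h_{i}(b)\,C^{n}([x]_n,[x]_n+b),
    \qquad
    \tilde{c}_n^{ik}(x)=\sum_{b\in\ZZ_n^{d}}h_{i}(b)h_{k}(b)\,C^{n}([x]_n,[x]_n+b),
$$
and $\int g\,d\nu_n(x,\cdot)=\sum_{b\in\ZZ_n^{d}}g(b)\,C^{n}([x]_n,[x]_n+b)$ for every bounded continuous $g$ vanishing near the origin. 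With this identification the local-uniform convergence hypotheses of \cite[Theorem IX.4.8]{Jacod-Shiryaev-2003} become \emph{precisely} the conditions \eqref{c4.s}, \eqref{c5.s} and \eqref{c6.s}, namely $\beta_n\to b$, $\tilde{c}_n\to(\int h_ih_k\,d\nu)$ and $\int g\,d\nu_n\to\int g\,d\nu$, uniformly on every ball $B_R(0)$.

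Second, I would check the remaining structural requirements. The continuity condition \eqref{c2.s} guarantees that the limiting coefficients $b(x)$, $x\mapsto\int_{\real^{d}}h_ih_k\,d\nu(x,\cdot)$ and $x\mapsto\int_{\real^{d}}g\,d\nu(x,\cdot)$ are continuous, so that $\mathbf{X}$ has exactly the regularity demanded by the martingale-problem formulation underlying IX.4.8; the big-jump control \eqref{c3.s}, $\lim_{r\to\infty}\sup_{x\in B_R(0)}\nu(x,B^{c}_r(0))=0$, supplies the uniform-integrability (majoration) hypothesis that prevents jump mass from escaping to infinity and, combined with the convergence of the coefficient functions, furnishes the tightness of $\{\mathbf{X}^{n}\}_{n\in\nat}$ on which IX.4.8 rests. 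Existence and uniqueness of the limit — i.e.\ well-posedness of the semimartingale problem with modified characteristics $(B,\tilde{A},N)$ — is exactly \eqref{c1.s}, while convergence of the initial distributions is assumed in the statement. Feeding all of this into \cite[Theorem IX.4.8]{Jacod-Shiryaev-2003} yields $\{X^{n}_t\}_{t\geq0}\xrightarrow[n\to\infty]{d}\process{X}$.

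The main technical point, and the one I would treat most carefully, is the passage from the \emph{lattice} characteristics of $\mathbf{X}^{n}$ to genuine coefficient functions on $\real^{d}$ through $x\mapsto[x]_n$. Since $\mathbf{X}^{n}$ jumps only by increments $b\in\ZZ_n^{d}$, the truncation $h$ is evaluated at lattice points while the limit uses $h(y)$ for $y\in\real^{d}$; one must confirm that \eqref{c4.s}–\eqref{c6.s}, phrased via $[x]_n$, match the hypotheses of IX.4.8 with no loss in the local-uniform convergence. This causes no difficulty, because $[x]_n\to x$ and the suprema on both sides are taken over the same balls, so the discretisation of the state is absorbed harmlessly. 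A secondary point is to exhibit \eqref{c3.s} explicitly in the precise majoration form used by \cite{Jacod-Shiryaev-2003}; this is routine but should be stated, after which the theorem applies directly.
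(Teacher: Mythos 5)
Your proposal is correct and coincides with the paper's own (very terse) argument: the paper proves Theorem~\ref{tm2.1} simply by observing that \eqref{c1.s}--\eqref{c6.s} together with convergence of the initial laws are exactly the hypotheses of \cite[Theorem IX.4.8]{Jacod-Shiryaev-2003} once the characteristics of $\process{X^{n}}$ computed in Section~\ref{sec21} are read as coefficient functions via $x\mapsto[x]_n$. Your additional remarks on the lattice-to-continuum identification and on the role of \eqref{c2.s} and \eqref{c3.s} merely spell out what the paper leaves implicit.
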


\subsection{Approximation}\label{sec32}

Using the same notation  as in Section \ref{sec22}, for $0<p\leq1$, we define a family of  kernels  $C^{n,p}:\ZZ_n^{d}\times\ZZ_n^{d}\to[0,\infty)$, $n\in\nat$, by
\begin{equation}\label{eq2.1}C^{n,p}(a,b)
:= \begin{cases}
    \nu(a,\bar{b}-a), & |a-b|>\frac{\sqrt{d}}{n^{p}} \\
    0, & |a-b|\leq \frac{\sqrt{d}}{n^{p}}.
    \end{cases}
\end{equation}
The kernles $C^{n,p}$, $n\in\nat$, automatically satisfy \eqref{t1}, the conditions \eqref{t2} and \eqref{t3} are ensured by
\begin{gather}\label{t1.s}\tag{\textbf{T1.S}}
    \forall \rho>0\::\:
    \sup_{x\in\real^{d}}\nu(x,B^{c}_\rho(0))<\infty.
\end{gather}
Hence, under \eqref{t1.s}, there is a family of regular Markov semimartingales  $\process{X^{n}}$, $n\in\nat$, with  (modified) characteristics of the form
\begin{align*}
    B^{n}(h)_t
    &=\int_0^{t}\sum_{a\in\ZZ_n^{d}}h(a)\,\nu(X_s^{n},\bar{a})\,ds,\\
    A^{n}_t&=0,\\
    \tilde{A}^{n}(h)_t^{ik}&=\int_0^{t}\sum_{a\in\ZZ_n^{d}}h_{i}(a)h_{k}(a)\,\nu(X_s^{n},\bar{a})\,ds,\\
    N^{n}(ds,b)&=\nu(X_s^{n},\bar{a})\,ds
\end{align*}
with some fixed truncation function $h:\real^{d}\to\real^{d}$.

Let us give sufficient conditions for \eqref{c4.s}--\eqref{c6.s}.

\begin{proposition}\label{p2.2}
\textup{(i)} The condition \eqref{c4.s} will be satisfied if one of the following two conditions holds:
\begin{gather}\label{c4.s1}\tag{\textbf{C4.S.1}}
    \parbox[t]{.85\linewidth}{
    there exists $\rho>0$ such that $\nu(x,dy)$ is symmetric on $B_\rho(0)$ for all $x\in\real^{d}$ and\newline
    \phantom{MM}$
    \begin{aligned}
    &\sup_{x\in B_R(0)}\nu(x,B^{c}_r(0))<\infty,\quad r>0,\\
    &\lim_{r\to\infty}\sup_{x\in B_R(0)}\nu(x,B^{c}_r(0))=0,\\
    &\lim_{n\to\infty}\sup_{x\in B_R(0)}  \lVert\nu([x]_n,\bullet)-\nu(x,\bullet)\rVert_{\mathrm{TV}(B^{c}_\varepsilon(0))}=0,\quad \varepsilon>0,\\
    &b_i(x)=\lim_{\varepsilon\downto 0}\int_{B^{c}_\varepsilon(0)}h_i(y)\,\nu(x,dy)
    \end{aligned}$\newline
    hold  for all  $R>0$ and $i=1,\dots,d$, where $\lVert\mu(\bullet)\rVert_{\mathrm{TV}(A)}$  denotes the  total variation of the signed measure $\mu(\bullet\cap A)$;}\\
\label{c4.s2}\tag{\textbf{C4.S.2}}
    \parbox[t]{.85\linewidth}{
    \phantom{MM}$
    \begin{aligned}
    &\sup_{x\in B_R(0)}\nu(x,B^{c}_r(0))<\infty,\quad\text{for all\ } r>0,\\
    &\lim_{r\to\infty}\sup_{x\in B_R(0)}\nu(x,B^{c}_r(0))=0,\\
    &\lim_{\varepsilon\downto  0}\varepsilon\sup_{x\in B_R(0)}\nu(x,B_1(0)\setminus  B_{\varepsilon^{p}}(0))=0,\\
    &\lim_{\varepsilon\downto 0}\varepsilon^{p}\sup_{x\in B_R(0)}\nu\big(x,B_{\sqrt{d}\varepsilon^{p}+(\sqrt{d}/2)\varepsilon}(0)\setminus B_{\sqrt{d}\varepsilon^{p}-(\sqrt{d}/2)\varepsilon}(0)\big) = 0,\\
    &\lim_{n\to\infty}\sup_{x\in B_R(0)}\int_{B_{1}(0)\setminus B_{\sqrt{d}/n^{p}-\sqrt{d}/2n}(0)}|y|\lVert\nu([x]_n,dy)-\nu(x,dy)\rVert_{\mathrm{TV}}=0,\\
    &\lim_{n\to\infty}\sup_{x\in B_R(0)}  \lVert\nu([x]_n,\bullet)-\nu(x,\bullet)\rVert_{\mathrm{TV}(B^{c}_\varepsilon(0))}=0,\quad \varepsilon>0,\\
    &\lim_{\varepsilon\downto 0}\sup_{x\in B_R(0)}\left|\int_{B^{c}_\varepsilon(0)}h_i(y)\,\nu(x,dy)-b_i(x)\right|=0
    \end{aligned}$\newline
    hold  for all  $R>0$ and $i=1,\dots,d$.}
\end{gather}

\noindent
\textup{(ii)} The condition \eqref{c5.s} will be satisfied if
\begin{gather}
\label{c5.s1}\tag{\textbf{C5.S.1}}
    \parbox[t]{.85\linewidth}{\phantom{MM}$
    \begin{aligned}
    &\sup_{x\in B_R(0)}\nu(x,B^{c}_r(0))<\infty,\quad r>0,\\
    &\lim_{r\to\infty}\sup_{x\in B_R(0)}\nu(x,B^{c}_r(0))=0,\\
    &\lim_{\varepsilon\downto 0}\varepsilon\sup_{x\in B_R(0)}\int_{B_1(0)\setminus B_{\varepsilon^{p}}(0)}|y|\,\nu(x,dy)=0,\\
    &\lim_{\varepsilon\downto 0}\sup_{x\in B_R(0)}\int_{B_\varepsilon(0)}|y|^{2}\,\nu(x,dy)=0,\\
    &\lim_{n\to\infty}\sup_{x\in B_R(0)}\int_{B_1(0)}|y|^{2}\lVert\nu([x]_n,dy)-\nu(x,dy)\rVert_{\mathrm{TV}}=0,\\
    &\lim_{n\to\infty}\sup_{x\in B_R(0)}  \lVert\nu([x]_n,\bullet)-\nu(x,\bullet)\rVert_{\mathrm{TV}(B^{c}_\varepsilon(0))}=0,\quad \varepsilon>0,
    \end{aligned}$\newline
    hold  for all  $R>0$.}
\end{gather}

\noindent
\textup{(iii)} The condition \eqref{c6.s} will be satisfied if
\begin{gather}
\label{c6.s1}\tag{\textbf{C6.S.1}}
    \parbox[t]{.85\linewidth}{for any $R>0$,\newline
    \mbox{}$\qquad\displaystyle
    \begin{aligned}
    &\sup_{x\in B_R(0)}\nu(x,B^{c}_r(0))<\infty,\quad r>0,\\
    &\lim_{r\to\infty}\sup_{x\in B_R(0)}\nu(x,B^{c}_r(0))=0,\\
    &\lim_{n\to\infty}\sup_{x\in B_R(0)}  \lVert\nu([x]_n,\bullet)-\nu(x,\bullet)\rVert_{\mathrm{TV}(B^{c}_\varepsilon(0))}=0,\quad \varepsilon>0.
    \end{aligned}$}
\end{gather}

\end{proposition}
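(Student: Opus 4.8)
The plan is to treat all three parts within one framework and then isolate the single delicate case. Write $\delta_n:=\sqrt d/n^p$ and recall that for $a=[x]_n$ one has $\overline{a+b}-a=\bar b$, so that for any test function $\phi$ (standing for $h_i$, $h_ih_k$ or $g$)
$$
    \sum_{b\in\ZZ_n^d}\phi(b)\,C^{n,p}([x]_n,[x]_n+b)=\sum_{|b|>\delta_n}\phi(b)\,\nu([x]_n,\bar b).
$$
Since the cubes $\{\bar b:b\in\ZZ_n^d\}$ tile $\real^d$, this is a truncated Riemann sum for $\int\phi\,\nu([x]_n,dy)$, and I would compare it to the target through
$$
    \Big(\text{sum}-\!\!\int_{\{|y|>\delta_n\}}\!\!\!\phi\,\nu([x]_n,dy)\Big)+\Big(\int_{\{|y|>\delta_n\}}\!\!\!\phi\,\nu([x]_n,dy)-\int\phi\,\nu(x,dy)\Big).
$$
Three error sources appear: a \emph{Riemann error} $\sum_{|b|>\delta_n}\int_{\bar b}(\phi(b)-\phi(y))\,\nu([x]_n,dy)$; a \emph{cutoff/boundary term} supported in $B_{\delta_n}(0)$ and in the thin annulus $A_n:=\{\delta_n-\sqrt d/2n\le|y|\le\delta_n+\sqrt d/2n\}$; and a \emph{coefficient term} comparing $\nu([x]_n,\cdot)$ with $\nu(x,\cdot)$. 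The whole proof is the bookkeeping of showing each tends to $0$ uniformly in $x\in B_R(0)$ (using that $[x]_n$ stays in a slightly larger ball), and matching each to a hypothesis.

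For \eqref{c6.s} this is cleanest: $g$ vanishes on some $B_{\varepsilon_0}(0)$, so for large $n$ the cutoff is invisible and $B_{\delta_n}$ contributes nothing; the Riemann error is controlled by uniform continuity of $g$ on compact annuli together with the tail bound $\lim_{r\to\infty}\sup_x\nu(x,B^c_r(0))=0$ (on the far region only boundedness of $g$ and smallness of the mass are used), while the coefficient term is $\le\|g\|_\infty\,\|\nu([x]_n,\cdot)-\nu(x,\cdot)\|_{\mathrm{TV}(B^c_{\varepsilon_0}(0))}\to0$. For \eqref{c5.s} the same scheme applies, the only new point being the origin: since $h$ is a truncation function, $|h_i(y)h_k(y)|\lesssim|y|^2$ near $0$, so the $B_{\delta_n}$-part and $A_n$ are both absorbed by $\sup_x\int_{B_\varepsilon(0)}|y|^2\nu(x,dy)\to0$; the near-origin Riemann error (there $h_ih_k$ is smooth with increments $\lesssim|y|/n$) is killed by $\varepsilon\sup_x\int_{B_1(0)\setminus B_{\varepsilon^p}(0)}|y|\nu(x,dy)\to0$ taken at $\varepsilon\sim1/n$; and the coefficient term splits over $B_1(0)$ (weight $|y|^2$, controlled by $\int_{B_1(0)}|y|^2\|\nu([x]_n,dy)-\nu(x,dy)\|_{\mathrm{TV}}\to0$) and $B^c_1(0)$ (the $\mathrm{TV}(B^c_\varepsilon(0))$ hypothesis).

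The crux is \eqref{c4.s}, since here $\phi=h_i\sim y_i$ is only first order at the origin, so $\int_{B_\varepsilon(0)}|y_i|\,\nu(x,dy)$ may blow up and no naive truncation is permitted. Under \eqref{c4.s1} I would use cancellation directly: choosing $\rho$ small enough that $h=\id$ on $B_\rho(0)$, symmetry of $\nu([x]_n,\cdot)$ on $B_\rho(0)$ gives $\nu([x]_n,\bar b)=\nu([x]_n,\overline{-b})$, so pairing $b\leftrightarrow-b$ yields $\sum_{\delta_n<|b|,\,\bar b\subseteq B_\rho(0)}h_i(b)\nu([x]_n,\bar b)=0$; the same symmetry forces $b_i(x)=\int_{B^c_\rho(0)}h_i\,\nu(x,dy)$, and what is left is exactly a \eqref{c6.s}-type argument on $\{|y|\gtrsim\rho\}$ (a thin annulus near $|b|=\rho$ being negligible). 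Under the general \eqref{c4.s2} there is no cancellation, and I would instead compare the sum directly with $\int_{B^c_{\delta_n}(0)}h_i\,\nu(x,dy)$, which converges uniformly to $b_i(x)$ by the last line of \eqref{c4.s2} with $\varepsilon=\delta_n$. The residual discrepancy is precisely: the near-origin Riemann error, bounded by $\tfrac{\sqrt d}{2n}\,\nu([x]_n,B_1(0)\setminus B_{\sim\delta_n}(0))$ and killed by the third condition of \eqref{c4.s2}; the annulus mismatch, bounded by $(\delta_n+\tfrac{\sqrt d}{2n})\,\nu([x]_n,A_n)$ and killed by the fourth condition (whose radii $\sqrt d\varepsilon^p\pm(\sqrt d/2)\varepsilon$ are, at $\varepsilon=1/n$, exactly the radii of $A_n$); and the coefficient term, split over $B_1(0)\setminus B_{\delta_n}(0)$ (weight $|y|$, fifth condition) and $B^c_1(0)$ (the $\mathrm{TV}(B^c_\varepsilon(0))$ condition).

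The main obstacle is therefore \eqref{c4.s} under \eqref{c4.s2}: one must follow the first-order drift with \emph{no} integrability of $|y|\,\nu$ near $0$, and the argument succeeds only because the three near-origin quantities — the $O(1/n)$ Riemann increment times $\nu(B_1(0)\setminus B_{\delta_n}(0))$, the annular mass $\nu(A_n)$, and the $|y|$-weighted $\mathrm{TV}$ difference — are each calibrated to one hypothesis, together with the uniform limit $\int_{B^c_{\delta_n}(0)}h_i\,\nu(x,dy)\to b_i(x)$. The routine but lengthy residue is verifying that every ``$\sup_{x\in B_R(0)}$'' passes through $[x]_n$ and that the relevant ball inclusions (e.g.\ $B_1(0)\setminus B_{\delta_n-\sqrt d/2n}(0)\subseteq B_1(0)\setminus B_{\varepsilon^p}(0)$ for a suitable $\varepsilon\sim1/n$ and large $n$) hold; I would record these once and reuse them across all parts.
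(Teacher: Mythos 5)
Your decomposition --- truncated Riemann sum over the cubes $\bar b$, split into a Riemann error, a cutoff/annulus boundary term, and a total-variation coefficient term, each matched to one hypothesis, with the symmetry cancellation handling \eqref{c4.s1} and the three calibrated near-origin estimates (the $O(1/n)$ increment times $\nu(x,B_1(0)\setminus B_{\delta_n}(0))$, the annular mass, and the $|y|$-weighted TV difference) handling \eqref{c4.s2} --- is precisely the paper's proof. The argument is correct and essentially identical to the one given there, including the identification of \eqref{c4.s} under \eqref{c4.s2} as the only delicate case.
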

\begin{proof}
    Let us first check \eqref{c4.s}. For every $i\in\{1,\dots,d\}$ we have
    \begin{align*}
    \bigg|&\sum_{b\in\ZZ_n^{d}}h_{i}(b)C^{n,p}([x]_n,[x]_n+b)-b_{i}(x)\bigg|
    =\bigg|\sum_{|b|>\sqrt{d}/n^{p}}h_{i}(b)\nu([x]_n,\bar{b})-b_{i}(x)\bigg|\\
    &\leq\bigg|\sum_{|b|>\sqrt{d}/n^{p}}h_{i}(b)\big(\nu([x]_n,\bar{b})-\nu(x,\bar{b})\big)\bigg| +\bigg|\sum_{|b|>\sqrt{d}/n^{p}}h_{i}(b)\nu(x,\bar{b})-b_{i}(x)\bigg|\\
    &\leq  \bigg|\sum_{|b|>\sqrt{d}/n^{p}}\int\limits_{\bar{b}} \big(h_{i}(b)-h_i(y)\big) \big(\nu([x]_n,dy)-\nu(x,dy)\big)\bigg|\\
    &\quad\mbox{}+ \bigg|\sum_{|b|>\sqrt{d}/n^{p}}\int_{\bar{b}}h_i(y) \big(\nu([x]_n,dy)-\nu(x,dy)\big) -\int\limits_{B^{c}_{\sqrt{d}/n^{p}-\sqrt{d}/2n}(0)}\!\!\!\!\!\!h_i(y)\big(\nu([x]_n,dy)-\nu(x,dy)\big)\bigg|\\
    &\quad\mbox{}+\bigg|\int\limits_{B^{c}_{\sqrt{d}/n^{p}-\sqrt{d}/2n}(0)}\!\!\! h_i(y)\big(\nu([x]_n,dy)-\nu(x,dy)\big)\bigg| +\bigg\rvert\sum_{|b|>\sqrt{d}/n^{p}}\int\limits_{\bar{b}}\big(h_i(b)-h_i(y)\big)\,\nu(x,dy)\bigg|\\
    &\quad\mbox{}+\bigg|\sum_{|b|>\sqrt{d}/n^{p}}\int_{\bar{b}}h_i(y)\,\nu(x,dy) -\int_{B^{c}_{\sqrt{d}/n^{p}-\sqrt{d}/2n}(0)}h_i(y)\,\nu(x,dy)\bigg\rvert\\
    &\quad\mbox{}+\bigg|\int_{B^{c}_{\sqrt{d}/n^{p}-\sqrt{d}/2n}(0)}h_i(y)\,\nu(x,dy)-b_{i}(x)\bigg\rvert.
\end{align*}
Pick $0<\eta<\rho\wedge1$ such that $h(y)=y$ for all $y\in B_{\eta}(0)$ (recall that $\rho>0$ appears in \eqref{c4.s1}). For all $n\in\nat$, $n^{p}>3\sqrt{d}/2\eta$, we have that
\begin{align*}
    &\bigg|\sum_{b\in\ZZ_n^{d}}h_{i}(b)C^{n,p}([x]_n,[x]_n+b)-b_{i}(x)\bigg|\\
    &\leq  \bigg|\sum_{\sqrt{d}/n^{p}<|b|<\eta-\sqrt{d}/2n}\int_{\bar{b}}(b_{i}-y_i) \big(\nu([x]_n,dy)-\nu(x,dy)\big)\bigg|\\
    &\quad\mbox{}+ \bigg|\sum_{|b|\geq\eta-\sqrt{d}/2n}\int_{\bar{b}}\big(h_{i}(b)-h_i(y)\big) \big(\nu([x]_n,dy)-\nu(x,dy)\big)\bigg|\\
    &\quad\mbox{}+ \bigg|\int_{B_{\sqrt{d}/n^{p}+\sqrt{d}/2n}(0)\setminus B_{\sqrt{d}/n^{p}-\sqrt{d}/2n}(0)}y_i\,\big(\nu([x]_n,dy)-\nu(x,dy)\big)\bigg|\\
    &\quad\mbox{}+\bigg|\int_{B_\eta(0)\setminus B_{\sqrt{d}/n^{p}-\sqrt{d}/2n}(0)}\!\!\!y_i\,\big(\nu([x]_n,dy)-\nu(x,dy)\big)\bigg|+
    \lVert h\rVert_\infty  \lVert\nu([x]_n,\bullet)-\nu(x,\bullet)\rVert_{\mathrm{TV}(B^{c}_{\eta}(0))}\\
    &\quad\mbox{}+\bigg\rvert\sum_{\sqrt{d}/n^{p}<|b|<\eta-\sqrt{d}/2n}\int_{\bar{b}}(b_i-y_i)\,\nu(x,dy)\bigg|
    +\bigg\rvert\sum_{|b|\geq\eta-\sqrt{d}/2n}\int_{\bar{b}}\big(h_i(b)-h_i(y)\big)\,\nu(x,dy)\bigg|\\
    &\quad\mbox{}+\bigg|\int_{B_{\sqrt{d}/n^{p}+\sqrt{d}/2n}(0)\setminus B_{\sqrt{d}/n^{p}-\sqrt{d}/2n}(0)} y_i\,\nu(x,dy)\bigg\rvert +\bigg|\int_{B^{c}_{\sqrt{d}/n^{p}-\sqrt{d}/2n}(0)}h_i(y)\,\nu(x,dy)-b_{i}(x)\bigg\rvert.
\end{align*}
Fix $R>0$ and $\varepsilon>0$. According to \eqref{c4.s1} there is some $r_0>0$ such that
$$
    \sup_{x\in B_R(0)}\nu(x,B^{c}_r(0))<\varepsilon,\quad r\geq r_0.
$$
Since $h_i(y)$ is uniformly continuous on the compact set $\bar{B}_{2r_0}(0))$, 
there is some $\delta>0$ such that $|h_i(y_1)-h_i(y_2)|<\varepsilon$ for all $y_1,y_2\in \bar{B}_{2r_0}(0)$, $|y_1-y_2|<\delta$.
Now, for $n\in\nat$, $n^{p}>3\sqrt{d}/2\eta\vee \sqrt{d}/2\delta$, we have that
\begin{align*}
    &\bigg|\sum_{b\in\ZZ_n^{d}}h_{i}(b)C^{n,p}([x]_n,[x]_n+b)-b_{i}(x)\bigg|\\
    &\leq  \bigg|\sum_{\sqrt{d}/n^{p}<|b|<\eta-\sqrt{d}/2n}\int_{\bar{b}}(b_{i}-y_i) \big(\nu([x]_n,dy)-\nu(x,dy)\big)\bigg|\\
    &\quad\mbox{}+ \varepsilon \lVert\nu([x]_n,\bullet)-\nu(x,\bullet)\rVert_{\mathrm{TV}(B_{2r_0}(0)\setminus B_{\eta/2}(0))}
    +4\varepsilon\lVert h\rVert_\infty\\
    &\quad\mbox{}+ \bigg|\int_{B_{\sqrt{d}/n^{p}+\sqrt{d}/2n}(0)\setminus B_{\sqrt{d}/n^{p}-\sqrt{d}/2n}(0)}y_i \big(\nu([x]_n,dy)-\nu(x,dy)\big)\bigg|\\
    &\quad\mbox{}+\bigg|\int_{B_\eta(0)\setminus B_{\sqrt{d}/n^{p}-\sqrt{d}/2n}(0)}y_i \big(\nu([x]_n,dy)-\nu(x,dy)\big)\bigg|+
    \lVert h\rVert_\infty  \lVert\nu([x]_n,\bullet)-\nu(x,\bullet)\rVert_{\mathrm{TV}(B^{c}_{\eta}(0))}\\
    &\quad\mbox{}+\bigg\rvert\sum_{\sqrt{d}/n^{p}<|b|<\eta-\sqrt{d}/2n}\int_{\bar{b}}(b_i-y_i)\,\nu(x,dy)\bigg|
    +\varepsilon\nu\big(x,B_{2r_0}(0)\setminus B_{\eta/2}(0)\big)
    +2\varepsilon\lVert h\rVert_\infty\\
    &\quad\mbox{}+\bigg|\int_{B_{\sqrt{d}/n^{p}+\sqrt{d}/2n}(0)\setminus B_{\sqrt{d}/n^{p}-\sqrt{d}/2n}(0)} y_i\,\nu(x,dy)\bigg\rvert +\bigg|\int_{B^{c}_{\sqrt{d}/n^{p}-\sqrt{d}/2n}(0)}h_i(y)\,\nu(x,dy)-b_{i}(x)\bigg\rvert.
\end{align*}
This shows that \eqref{c4.s1} implies \eqref{c4.s}. Further, we have that
\begin{align*}
    &\bigg|\sum_{b\in\ZZ_n^{d}}h_{i}(b)C^{n,p}([x]_n,[x]_n+b)-b_{i}(x)\bigg|\\
    &\leq \frac{\sqrt{d}}{2n}\, \lVert\nu([x]_n,\bullet)-\nu(x,\bullet)\rVert_{\mathrm{TV}(B_{\eta}(0)\setminus B_{\sqrt{d}/n^{p}-\sqrt{d}/2n}(0))}\\
    &\quad\mbox{}+ \varepsilon  \lVert\nu([x]_n,\bullet)-\nu(x,\bullet)\rVert_{\mathrm{TV}(B_{2r_0}(0)\setminus B_{\eta/2}(0))} +4\varepsilon\lVert h\rVert_\infty\\
    &\quad\mbox{}+\int_{B_{\sqrt{d}/n^{p}+\sqrt{d}/2n}(0)\setminus B_{\sqrt{d}/n^{p}-\sqrt{d}/2n}(0)} |y|\, \lVert\nu([x]_n,dy)-\nu(x,dy)\rVert_{\mathrm{TV}}\\
    &\quad\mbox{}+ \int_{B_{\eta}(0)\setminus B_{\sqrt{d}/n^{p}-\sqrt{d}/2n}(0)} |y| \, \lVert\nu([x]_n,dy)-\nu(x,dy)\rVert_{\mathrm{TV}}\\
    &\quad\mbox{}+ \lVert h\rVert_\infty  \lVert\nu([x]_n,\bullet)-\nu(x,\bullet)\rVert_{\mathrm{TV}(B^{c}_{\eta}(0))}\\
    &\quad\mbox{}+\frac{\sqrt{d}}{2n}\,\nu\big(x,B_{\eta}(0)\setminus B_{\sqrt{d}/n^{p}-\sqrt{d}/2n}(0)\big)
    + \varepsilon\nu\big(x,B_{2r_0}(0)\setminus B_{\eta/2}(0)\big)
    + 2\varepsilon\lVert h\rVert_\infty\\
    &\quad\mbox{}+\frac{3\sqrt{d}}{2n^{p}}\, \nu\big(x,B_{\sqrt{d}/n^{p}+\sqrt{d}/2n}(0)\setminus B_{\sqrt{d}/n^{p}-\sqrt{d}/2n}(0)\big) +\bigg|\int_{B^{c}_{\sqrt{d}/n^{p}-\sqrt{d}/2n}(0)}h_i(y)\,\nu(x,dy)-b_{i}(x)\bigg\rvert.
\end{align*}
Thus, \eqref{c4.s2} implies \eqref{c4.s}, too.

Now we turn to \eqref{c5.s}. Fix $i,k\in\{1,\dots,d\}$. We have
\begin{align*}
    &\bigg|\sum_{b\in\ZZ_n^{d}}h_{i}(b)h_{k}(b)C^{n}([x]_n,[x]_n+b)-\int_{\real^{d}}h_{i}(y)h_{k}(y)\,\nu(x,dy)\bigg|\\
    &=\bigg|\sum_{|b|>\sqrt{d}/n^{p}}h_{i}(b)h_{k}(b)\nu([x]_n,\bar{b})-\int_{\real^{d}}h_{i}(y)h_{k}(y)\,\nu(x,dy)\bigg|\\
    &\leq\bigg|\sum_{|b|>\sqrt{d}/n^{p}}h_{i}(b)h_{k}(b)\nu([x]_n,\bar{b})-\sum_{|b|>\sqrt{d}/n^{p}}h_{i}(b)h_{k}(b)\nu(x,\bar{b})\bigg|\\
    &\qquad\mbox{}+\bigg|\sum_{|b|>\sqrt{d}/n^{p}}\int_{\bar{b}}\big(h_{i}(b)h_{k}(b)-h_i(y)h_k(y)\big)\,\nu(x,dy)\bigg|
    +\bigg|\sum_{|b|\leq\sqrt{d}/n^{p}}\int_{\bar{b}}h_i(y)h_k(y)\,\nu(x,dy)\bigg|.
\end{align*}
Pick $0<\eta<1$ be such that $h(y)=y$ for all $y\in B_\eta(0)$. Thus, for all $n\in\nat$, $n^{p}>3\sqrt{d}/2\eta$,
\begin{align*}
    &\bigg|\sum_{b\in\ZZ_n^{d}}h_{i}(b)h_{k}(b)C^{n}([x]_n,[x]_n+b)-\int_{\real^{d}}h_{i}(y)h_{k}(y)\,\nu(x,dy)\bigg|\\
    &\leq \sum_{\sqrt{d}/n^{p}<|b|<\eta-\sqrt{d}/2n}|b_{i}||b_{k}| \bigg| \nu([x]_n,\bar{b})-\nu(x,\bar{b})\bigg|+2\lVert h\rVert^{2}_\infty\sum_{|b|\geq\eta-\sqrt{d}/2n} \bigg| \nu([x]_n,\bar{b})-\nu(x,\bar{b})\bigg|\\
    &\quad\mbox{}+\bigg|\sum_{\sqrt{d}/n^{p}<|b|<\eta-\sqrt{d}/2n}\int_{\bar{b}}(b_{i}b_{k}-y_iy_k)\,\nu(x,dy)\bigg|\\
    &\quad\mbox{}+\bigg|\sum_{|b|\geq\eta-\sqrt{d}/2n}\int_{\bar{b}}\big(h_{i}(b)h_{k}(b)-h_i(y)h_k(y)\big)\,\nu(x,dy)\bigg|\\
    &\quad\mbox{}+\int_{B_{\sqrt{d}/n^{p}+\sqrt{d}/2n}(0)} |y_i||y_k| \,\nu(x,dy) \\
    &\leq 4\int_{B_\eta(0)}|y|^{2} \lVert\nu([x]_n,dy)-\nu(x,dy)\rVert_{\mathrm{TV}}
    +2\lVert h\rVert^{2}_\infty  \lVert\nu([x]_n,\bullet)-\nu(x,\bullet)\rVert_{\mathrm{TV}(B^{c}_{\eta/2}(0))}\\
    &\quad\mbox{}+\frac{3}{2n}\int_{B_\eta(0)\setminus B_{\sqrt{d}/n^{p}-\sqrt{d}/2n}(0)} |y| \,\nu(x,dy)\\
    &\quad\mbox{}+\bigg|\sum_{|b|\geq\eta-\sqrt{d}/2n} \int_{\bar{b}} \big(h_{i}(b)h_{k}(b)-h_i(y)h_k(y)\big)\,\nu(x,dy)\bigg|\\
    &\quad\mbox{}+\int_{B_{\sqrt{d}/n^{p}+\sqrt{d}/2n}(0)} |y|^{2}\,\nu(x,dy).
\end{align*}
In the final step we use the elementary estimates
$|b_ib_k-y_iy_k|\leq (3/2n)|y|$ and $|b_{i}||b_{k}|\leq4|y|^{2}$ for all $y\in \bar{b}$.

Fix $R>0$ and $\varepsilon>0$. As before \eqref{c5.s1} shows that there are $r_0>0$ and $\delta>0$, such that
$$
    \sup_{x\in B_R(0)}\nu(x,B^{c}_r(0))<\varepsilon,\quad r\geq r_0,
$$
and $|h_i(y_1)h_k(y_1)-h_i(y_2)h_k(y_2)|<\varepsilon$ for all $y_1,y_2\in \bar{B}_{2r_0}(0)$, $|y_1-y_2|<\delta$. Consequently, for any $n\in\nat$, $n^{p}>3\sqrt{d}/2\eta\vee\sqrt{d}/2\delta$,
\begin{align*}
    &\bigg|\sum_{b\in\ZZ_n^{d}}h_{i}(b)h_{k}(b)C^{n}([x]_n,[x]_n+b)-\int_{\real^{d}}h_{i}(y)h_{k}(y)\,\nu(x,dy)\bigg|\\
    &\leq 4\int_{B_\eta(0)}|y|^{2} \, \lVert\nu([x]_n,dy)-\nu(x,dy)\rVert_{\mathrm{TV}}
    + 2\lVert h\rVert^{2}_\infty  \lVert\nu([x]_n,\bullet)-\nu(x,\bullet)\rVert_{\mathrm{TV}(B^{c}_{\eta/2}(0))}\\
    &\qquad\mbox{}+ \frac{2\sqrt{d}}{n}\int_{B_\eta(0)\setminus B_{\sqrt{d}/n^{p}-\sqrt{d}/2n}(0)} |y|\,\nu(x,dy) +\varepsilon\nu\big(x,B_{2r_0}\setminus B_\eta(0)\big)
    +2\varepsilon\lVert h\rVert^{2}_\infty \\
    &\qquad\mbox{}+\int_{B_{\sqrt{d}/n^{p}+\sqrt{d}/2n}(0)} |y|^{2}\,\nu(x,dy).
\end{align*}
Therefore, \eqref{c5.s} is a direct consequence of \eqref{c5.s1}.

Let us, finally, discuss \eqref{c6.s}. Fix any bounded continuous function $g:\real^{d}\to\real$ which vanishes, for some $\eta>0$, on $B_\eta(0)$. For all $n\in\nat$, $n^{p}>\sqrt{d}/2\eta$, we have
\begin{align*}
    &\bigg|\sum_{b\in\ZZ_n^{d}}g(b)C^{n}([x]_n,[x]_n+b)-\int_{\real^{d}}g(y)\,\nu(x,dy)\bigg|\\
    &=\bigg|\sum_{|b|>\sqrt{d}/n^{d}}g(b)\nu([x]_n,\bar{b}) - \int_{\real^{d}}g(y)\,\nu(x,dy)\bigg|\\
    &\leq\bigg|\sum_{|b|>\sqrt{d}/n^{p}}g(b)\nu([x]_n,\bar{b}) - \sum_{|b|>\sqrt{d}/n^{p}}g(b)\nu(x,\bar{b})\bigg|\\
    &\qquad\mbox{}+ \bigg|\sum_{|b|>\sqrt{d}/n^{p}}g(b)\nu(x,\bar{b}) - \int_{\real^{d}}g(y)\,\nu(x,dy)\bigg|\\
    & \leq\|g\|_\infty  \lVert\nu([x]_n,\bullet)-\nu(x,\bullet)\rVert_{\mathrm{TV}(B^{c}_{\eta/2}(0))}
    + \bigg|\sum_{|b|>\eta-\sqrt{d}/2n} \int_{\bar{b}} \big(g(b)-g(y)\big) \,\nu(x,dy)\bigg|.
\end{align*}
Fix  $R>0$ and $\varepsilon>0$, and pick, as before, $r_0>0$ and $\delta>0$ such that
$$
    \sup_{x\in B_R(0)}\nu(x,B^{c}_r(0))<\varepsilon,\quad r\geq r_0
$$
and $|g(y_1)-g(y_2)|<\varepsilon$ for all $y_1,y_2\in \bar{B}_{2r_{0}}(0)$, $|y_1-y_2|<\delta$. Thus, for all $n\in\nat$, $n^{p}>\sqrt{d}/2\eta\vee\sqrt{d}/2\delta$,
\begin{align*}
    &\bigg|\sum_{b\in\ZZ_n^{d}}g(b)C^{n}([x]_n,[x]_n+b) - \int_{\real^{d}}g(y)\,\nu(x,dy)\bigg|\\
    &\qquad\leq\|g\|_\infty  \lVert\nu([x]_n,\bullet)-\nu(x,\bullet)\rVert_{\mathrm{TV}(B^{c}_{\eta/2}(0))}
    + \varepsilon\nu\big(x,B_{2r_0}(0)\setminus B_{\eta}(0)\big)
    + 2\varepsilon\|g\|_\infty,
\end{align*}
which, in view of \eqref{c6.s1}, concludes the proof.
\end{proof}

In the following  we discuss the situation when $\int_{\real^{d}}|y|^{2}\nu(x,dy)<\infty$ for all $x\in\real^{d}$. The following result is a direct consequence of \cite[Theorem IX.4.15]{Jacod-Shiryaev-2003}.
\begin{theorem}\label{tm2.3}
    Let $C^{n}:\ZZ_n^{d}\times\ZZ_n^{d}\to[0,\infty)$, $n\in\nat$, be a family of  kernels  satisfying \eqref{t1} and \eqref{t2}, and let $\process{X^{n}}$, $n\in\nat$, be the corresponding family of regular Markov semimartingales on $\ZZ_n^{d}$. Assume that $\int_{\real^{d}}|y|^{2}\nu(x,dy)<\infty$ for all $x\in\real^{d}$ and that the following conditions hold: \eqref{c1.s} and
    \begin{gather}
    \label{c2.sq}\tag{\textbf{C2.$\mathbf{S^{2}}$}}
    \parbox[t]{.88\linewidth}{
        the functions $x\mapsto b_{i}(x)+\int_{\real^{d}}(y_{i}-h_{i}(y))\nu(x,dy)$, $x\mapsto\int_{\real^{d}}y_{i}y_{k}\nu(x,dy)$, and $x\mapsto\int_{\real^{d}}g(y)\nu(x,dy)$ are continuous for all $i,k=1,\dots,d$ and all bounded and continuous functions $g:\real^{d}\to\real$ vanishing in a neighbourhood of the origin;}\\
    \label{c3.sq}\tag{\textbf{C3.$\mathbf{S^{2}}$}}
    \parbox[t]{.88\linewidth}{
        for all $R>0$, $\displaystyle\qquad\lim_{r\to\infty}\sup_{x\in B_R(0)}\int_{B^{c}_r(0)}|y|^{2}\,\nu(x,dy)=0;$}\\
    \label{c4.sq}\tag{\textbf{C4.$\mathbf{S^{2}}$}}
    \parbox[t]{.88\linewidth}{for all $R>0$ and all $i=1,\dots, d$,}\\
    \notag
        \lim_{n\to\infty}\sup_{x\in B_R(0)}\bigg|\sum_{b\in\ZZ_n^{d}}b_{i}C^{n}([x]_n,[x]_n+b)-b_{i}(x)-\int_{\real^{d}}(y_i-h_i(y))\,\nu(x,dy)\bigg|=0;\\
    \label{c5.sq}\tag{\textbf{C5.$\mathbf{S^{2}}$}}
    \parbox[t]{.88\linewidth}{
        for all $R>0$ and all $i=1,\dots,d$,}\\
    \notag
        \lim_{n\to\infty}\sup_{x\in B_R(0)} \bigg|\sum_{b\in\ZZ_n^{d}}b_{i}b_{k}C^{n}([x]_n,[x]_n+b)-\int_{\real^{d}}y_{i}y_{k}\,\nu(x,dy)\bigg|=0;\\
    \label{c6.sq}\tag{\textbf{C6.$\mathbf{S^{2}}$}}
    \parbox[t]{.88\linewidth}{
        for all $R>0$ and all  bounded and continuous functions $g:\real^{d}\to\real$ vanishing in a neighbourhood of the origin,}\\
    \notag
        \lim_{n\to\infty}\sup_{x\in B_R(0)}\bigg|\sum_{b\in\ZZ_n^{d}}g(b)C^{n}([x]_n,[x]_n+b)-\int_{\real^{d}}g(y)\,\nu(x,dy)\bigg|=0.
    \end{gather}
    If the initial distributions of  $\process{X^{n}}$, $n\in\nat$,  converge weakly to that of $\process{X}$, then
    $$
        \left\{X^{n}_t\right\}_{t\geq0}\xrightarrow[n\to\infty]{d}\process{X}
    $$
    in Skorokhod space.
\end{theorem}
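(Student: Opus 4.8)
The plan is to verify that the hypotheses of \cite[Theorem IX.4.15]{Jacod-Shiryaev-2003}, the square-integrable version of the general semimartingale convergence criterion, are met; the conditions \eqref{c2.sq}--\eqref{c6.sq} have been tailored precisely to match those hypotheses. Since we assume $\int_{\real^d}|y|^2\,\nu(x,dy)<\infty$, the natural objects are the \emph{modified second-order characteristics}: the modified drift $\tilde b_i(x):=b_i(x)+\int_{\real^d}(y_i-h_i(y))\,\nu(x,dy)$, the modified second characteristic $\tilde c^{ik}(x):=\int_{\real^d}y_iy_k\,\nu(x,dy)$, and the compensating measure $\nu(x,dy)$ itself. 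The first step is therefore to re-read the characteristics of the chains $\process{X^n}$ computed at the beginning of Section~\ref{sec32}: using $N^{n}(ds,b)=\nu(X^n_s,\bar b)\,ds$, the discrete analogues of the three objects above, evaluated at the lattice point $[x]_n$, are exactly
$$
\sum_{b\in\ZZ_n^d}b_i\,\nu([x]_n,\bar b),\qquad
\sum_{b\in\ZZ_n^d}b_ib_k\,\nu([x]_n,\bar b),\qquad
\sum_{b\in\ZZ_n^d}g(b)\,\nu([x]_n,\bar b).
$$

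The second step is to match each hypothesis of \cite[Theorem IX.4.15]{Jacod-Shiryaev-2003} with one of our assumptions. Existence and uniqueness of the limiting martingale problem---which identifies the limit as the process $\process{X}$---is furnished by \eqref{c1.s}. The continuity of the limiting coefficients $\tilde b$, $\tilde c^{ik}$ and $x\mapsto\int_{\real^d} g\,d\nu$ required by the theorem is exactly \eqref{c2.sq}. The uniform control of the large jumps in second moment, $\lim_{r\to\infty}\sup_{x\in B_R(0)}\int_{B^c_r(0)}|y|^2\,\nu(x,dy)=0$, is \eqref{c3.sq}; this plays the role of the $L^2$-majoration/uniform-integrability condition that lets one pass from truncated to untruncated second moments and secures $C$-tightness of the second characteristic. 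Finally, the locally uniform convergence of the (modified) characteristics of $\process{X^n}$ to those of $\process{X}$ is supplied term by term: \eqref{c4.sq} is the convergence of the modified drift, \eqref{c5.sq} the convergence of the second characteristic, and \eqref{c6.sq} the convergence of the large-jump part tested against functions vanishing near the origin. Together with the assumed weak convergence of the initial laws, all hypotheses of \cite[Theorem IX.4.15]{Jacod-Shiryaev-2003} hold, and the claimed convergence in Skorokhod space follows.

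The step that requires the most care is the bookkeeping between the truncated characteristics (relative to $h$) that define the chains and the untruncated second-order characteristics demanded by the square-integrable criterion. This is the reason for the correction term $\int_{\real^d}(y_i-h_i(y))\,\nu(x,dy)$ appearing inside \eqref{c4.sq}: the first moment $\sum_{b}b_i\,\nu([x]_n,\bar b)$ of the chain must converge not to $b_i(x)$ but to the modified drift $\tilde b_i(x)$, and the finite-second-moment hypothesis is exactly what guarantees that $y_i-h_i(y)$ is $\nu(x,\cdot)$-integrable, so that this term is well defined. One must likewise check that \eqref{c3.sq} legitimizes replacing $\sum_{b}h_i(b)h_k(b)\,\nu([x]_n,\bar b)$ by the full second moment $\sum_{b}b_ib_k\,\nu([x]_n,\bar b)$ in the limit, i.e.\ that the contribution of the large jumps to the truncation discrepancy is negligible uniformly on compact sets. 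Once this identification is in place, the statement is a direct translation of \cite[Theorem IX.4.15]{Jacod-Shiryaev-2003}.
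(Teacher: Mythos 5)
Your proposal is correct and takes essentially the same route as the paper, which states this result as a direct consequence of \cite[Theorem IX.4.15]{Jacod-Shiryaev-2003} without further elaboration; your term-by-term matching of \eqref{c1.s} and \eqref{c2.sq}--\eqref{c6.sq} to the hypotheses of that theorem (continuity of the untruncated coefficients, uniform second-moment control of large jumps, locally uniform convergence of the modified characteristics, convergence of initial laws) supplies exactly the bookkeeping the paper leaves implicit. One minor slip: in your first step you write the chain's jump compensator as $N^{n}(ds,b)=\nu(X^{n}_s,\bar b)\,ds$, which is the specific discretisation \eqref{eq2.1} rather than the general kernel $C^{n}$ assumed here, but this does not affect the argument since \eqref{c4.sq}--\eqref{c6.sq} are stated directly in terms of $C^{n}$.
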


If we use the discretisation \eqref{eq2.1} of $k(x,y)$, the proof of Proposition \ref{p2.2} applies and yields
\begin{proposition}\label{p2.4}
\textup{(i)} The condition \eqref{c4.sq} will be satisfied if either \eqref{c4.s1} or \eqref{c4.s2} and
    \begin{gather}
    \label{c2.sq1}\tag{\textbf{C2.$\mathbf{S^{2}}$.1}}
    \parbox[t]{.85\linewidth}{for all $R>0$,}\\
    \notag
        \lim_{n\to\infty}\sup_{x\in B_R(0)}\int_{B_{1}(0)}|y|\,\lVert\nu([x]_n,dy)-\nu(x,dy)\rVert_{\mathrm{TV}}=0
    \end{gather}
hold true.

\medskip
\noindent
\textup{(ii)} The condition \eqref{c5.sq} will be satisfied if
    \begin{gather}
    \label{c5.sq1}\tag{\textbf{C5.$\mathbf{S^{2}}$.1}}
    \parbox[t]{.85\linewidth}{for all $R>0$}\\
\notag
    \begin{aligned}[t]
    &\lim_{\varepsilon\downto 0}\varepsilon\sup_{x\in B_R(0)}\int_{B^{c}_{\varepsilon^{p}}(0)}|y|\,\nu(x,dy)=0,\\
    &\lim_{\varepsilon\downto 0}\sup_{x\in B_R(0)}\int_{B_\varepsilon(0)}|y|^{2}\,\nu(x,dy)=0,\\
    &\lim_{n\to\infty}\sup_{x\in B_R(0)}\int_{\real^{d}}|y|^{2} \, \lVert\nu([x]_n,dy)-\nu(x,dy)\rVert_{\mathrm{TV}}=0.
    \end{aligned}
\end{gather}
\end{proposition}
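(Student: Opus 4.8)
The plan is to run the argument of Proposition~\ref{p2.2} essentially verbatim, exploiting that the new integrands $b_i$ (resp.\ $b_ib_k$) agree with the truncated integrands $h_i(b)$ (resp.\ $h_i(b)h_k(b)$) on the small jumps, so that only the large--jump contributions have to be treated afresh. Fix $\eta\in(0,1)$ with $h(y)=y$ on $B_\eta(0)$ and recall the standing hypotheses of this subsection: the discretisation \eqref{eq2.1}, the integrability $\int_{\real^{d}}|y|^{2}\nu(x,dy)<\infty$, and the tail control \eqref{c3.sq}.

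For part (i) I would write $b_i=h_i(b)+(b_i-h_i(b))$ and split the sum in \eqref{c4.sq} accordingly. Since $b_i-h_i(b)=0$ whenever $|b|<\eta$ and $y_i-h_i(y)=0$ whenever $|y|<\eta$, the quantity
$$\sum_{b\in\ZZ_n^{d}}b_iC^{n,p}([x]_n,[x]_n+b)-b_i(x)-\int_{\real^{d}}(y_i-h_i(y))\,\nu(x,dy)$$
equals the expression controlled by \eqref{c4.s} (which follows from \eqref{c4.s1} or \eqref{c4.s2} by Proposition~\ref{p2.2}(i)) plus the large--jump remainder
$$\sum_{|b|\geq\eta}\bigl(b_i-h_i(b)\bigr)\nu([x]_n,\bar b)-\int_{|y|\geq\eta}\bigl(y_i-h_i(y)\bigr)\nu(x,dy).$$
I would estimate this remainder as in the proof of \eqref{c4.s}: first pass from the sum over cubes to an integral against $\nu([x]_n,\cdot)$ (the discretisation error is of order $n^{-1}$ times $\nu([x]_n,B_\eta^{c}(0))\leq\eta^{-2}\int|y|^{2}\nu$), then replace $\nu([x]_n,\cdot)$ by $\nu(x,\cdot)$. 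In the last step $y_i-h_i(y)$ is bounded on any annulus $\eta\leq|y|\leq r_0$, so the total--variation convergence already used for \eqref{c4.s} suffices there, whereas on $|y|>r_0$ one uses $|y_i-h_i(y)|\leq 2|y|$ together with $\int|y|^2\nu<\infty$ and \eqref{c3.sq} to make the tail uniformly small. The role of the reinforced first--moment convergence \eqref{c2.sq1} is to absorb the terms living on $B_1(0)$: it upgrades the annular first--moment bound hidden in \eqref{c4.s2} to the full ball and, in the symmetric branch \eqref{c4.s1}, compensates for the first--moment cancellation that is no longer directly available once the shrinking inner cut--off $\sqrt{d}/n^{p}$ is present.

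For part (ii) I would argue the same way with $y_iy_k=h_i(y)h_k(y)+(y_iy_k-h_i(y)h_k(y))$, now using that the full second moment $\int y_iy_k\,\nu(x,dy)$ is finite. Following the \eqref{c5.s}--part of the proof of Proposition~\ref{p2.2} I would split $\sum_b b_ib_kC^{n,p}-\int y_iy_k\nu$ into a difference--of--measures term, bounded via $|b_ib_k|\leq C|y|^{2}$ on $\bar b$ by $\int_{\real^{d}}|y|^{2}\lVert\nu([x]_n,dy)-\nu(x,dy)\rVert_{\mathrm{TV}}$ and hence controlled by the last line of \eqref{c5.sq1}; a discretisation term $\sum_b\int_{\bar b}(b_ib_k-y_iy_k)\nu(x,dy)$, where $|b_ib_k-y_iy_k|\leq Cn^{-1}|y|$ reduces matters to $\lim_{\varepsilon\downto0}\varepsilon\sup_x\int_{B_{\varepsilon^{p}}^{c}(0)}|y|\,\nu(x,dy)=0$; and a near--origin term $\int_{B_{\sqrt d/n^{p}+\sqrt d/2n}(0)}|y|^{2}\nu(x,dy)$ that vanishes by $\lim_{\varepsilon\downto0}\sup_x\int_{B_\varepsilon(0)}|y|^{2}\nu(x,dy)=0$. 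Here the bounded truncation of Proposition~\ref{p2.2} is replaced throughout by the integrability $\int|y|^{2}\nu<\infty$, so the separate treatment of big jumps via $\lVert h\rVert_\infty$ is no longer needed.

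The main obstacle in both parts is the unboundedness of the new integrands. In Proposition~\ref{p2.2} the truncation $h$ keeps every summand bounded and the large jumps are controlled merely through $\nu(B_r^{c}(0))\to0$; here one must instead produce uniform first-- and second--moment control. The genuinely delicate point is the interaction of the shrinking inner cut--off $\sqrt{d}/n^{p}$ with the possible non--integrability of $|y|$ near the origin, which is precisely what the scaled conditions $\varepsilon\sup_x\int_{B_{\varepsilon^{p}}^{c}(0)}|y|\,\nu\to0$ and the reinforced total--variation convergences \eqref{c2.sq1}, \eqref{c5.sq1} are designed to handle.
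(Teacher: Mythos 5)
Your proposal is correct and follows exactly the route the paper intends: the paper gives no separate proof of Proposition~\ref{p2.4}, stating only that ``the proof of Proposition~\ref{p2.2} applies,'' and your write-up is precisely that adaptation, with the right identification of the new difficulty (unbounded integrands $b_i$ and $b_ib_k$ on the large jumps) and of how the strengthened moment/total-variation conditions together with the standing hypotheses $\int|y|^2\nu(x,dy)<\infty$ and \eqref{c3.sq} replace the $\lVert h\rVert_\infty$-bounds used for \eqref{c4.s} and \eqref{c5.s}. The only cosmetic remark is that under your particular splitting $b_i=h_i(b)+(b_i-h_i(b))$ the remainder lives on $\{|y|\geq\eta\}$, so \eqref{c2.sq1} plays a smaller role than your closing paragraph suggests; this does not affect the validity of the argument.
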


In the following proposition we discuss tightness conditions \eqref{t4}-\eqref{t6}.

\begin{proposition}\label{p2.5}
Assume that \eqref{t1.s} holds. The discretisation defined in \eqref{eq2.1} satisfies the conditions \eqref{t4}--\eqref{t6} if the following conditions hold:
\begin{align}
    \label{t2.s}\tag{\textbf{T2.S}}
    &\parbox[t]{.85\linewidth}{$\displaystyle{\lim_{r\to\infty}\sup_{x\in\real^{d}}\nu(x,B^{c}_r(0))=0}$;}\\
    \label{t3.s}\tag{\textbf{T3.S}}
    &\parbox[t]{.85\linewidth}{either there exists some $\rho>0$ such that $\nu(x,dy)$ is symmetric on $B_\rho(0)$ for all $x\in\real^{d}$, or there exists
    some $\rho>0$  such that for all $i=1,\dots,d$}\\
    &\notag\qquad\begin{gathered}
        \limsup_{\varepsilon\downto 0}\sup_{x\in\real^{d}}\bigg|\int_{B_\rho(0)\setminus B_{\varepsilon^{p}}(0)}y_i\,\nu(x,dy)\bigg|<\infty,\\
        \limsup_{\varepsilon\downto 0}\varepsilon\sup_{x\in\real^{d}}\nu\left(x,B_\rho(0)\setminus B_{\varepsilon^{p}}(0)\right)<\infty;
    \end{gathered}\\
    \label{t4.s}\tag{\textbf{T4.S}}
    &\parbox[t]{.85\linewidth}{there exists some \textup{(}alternatively: for all\textup{)} $\rho>0$ such that}\\
    &\notag\qquad\qquad
    \begin{gathered}
        \lim_{\varepsilon\downto 0}\sup_{x\in\real^{d}}\int_{B_\rho(0)\setminus B_\varepsilon(0)}|y|^{2}\,\nu(x,dy)<\infty.
    \end{gathered}
\end{align}
\end{proposition}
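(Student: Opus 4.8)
The plan is to verify \eqref{t4}--\eqref{t6} for the conductances \eqref{eq2.1} by rewriting every sum $\sum_b(\cdots)\,C^{n,p}(a,a+b)$ as an integral against $\nu(a,\bullet)$. The key observation is that, for $|b|>\sqrt{d}/n^{p}$, we have $C^{n,p}(a,a+b)=\nu(a,\overline{a+b}-a)=\nu(a,\bar b)$, and that the cubes $\{\bar b:b\in\ZZ_n^{d}\}$ tile $\real^{d}$. Hence, setting $A_n:=\bigcup_{\sqrt{d}/n^{p}<|b|<\rho}\bar b=\{y:\sqrt{d}/n^{p}<|[y]_n|<\rho\}$, each such sum equals $\int_{A_n}\varphi([y]_n)\,\nu(a,dy)$ for the appropriate $\varphi$. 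I will repeatedly use the geometric containments $\bigcup_{|b|>r}\bar b\subseteq B^{c}_{r-\sqrt{d}/2n}(0)$ and $A_n\subseteq B_{\rho+\sqrt{d}/2n}(0)\setminus B_{\sqrt{d}/n^{p}-\sqrt{d}/2n}(0)$, together with $|y-b|\le\sqrt{d}/2n$ and $|b_i-y_i|\le 1/2n$ on $\bar b$, exactly as in the proofs of Propositions~\ref{p1.11} and~\ref{p1.12}.

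Condition \eqref{t4} follows directly from \eqref{t2.s}: for fixed $r$ and $n$ large, $\sum_{|b|>r}C^{n,p}(a,a+b)=\nu\big(a,\bigcup_{|b|>r}\bar b\big)\le\nu(a,B^{c}_{r-\sqrt{d}/2n}(0))\le\sup_{x}\nu(x,B^{c}_{r/2}(0))$, and letting $n\to\infty$ and then $r\to\infty$ the right-hand side vanishes. Condition \eqref{t6} needs no cancellation, because second moments are integrable near the origin. Since $|b_i||b_k|\le|b|^{2}\le 4|y|^{2}$ for $y\in\bar b$ (here $|b|>\sqrt{d}/n^{p}\ge\sqrt{d}/n$ gives $|b|\le 2|y|$), we obtain $\sup_a\big|\sum_{|b|<\rho}b_ib_k C^{n,p}(a,a+b)\big|\le 4\sup_a\int_{A_n}|y|^{2}\,\nu(a,dy)$. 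Splitting $A_n\subseteq(B_\rho(0)\setminus\{0\})\cup(B_{\rho+\sqrt{d}/2n}(0)\setminus B_\rho(0))$, the first part is at most $4\sup_{x}\int_{B_\rho(0)\setminus\{0\}}|y|^{2}\,\nu(x,dy)<\infty$, which is the content of \eqref{t4.s} (the truncated integrals increase to this finite limit), while on the second, thin shell $|y|^{2}\le(\rho+\sqrt{d}/2n)^{2}$ and the mass is dominated by $\sup_x\nu(x,B^{c}_\rho(0))<\infty$ from \eqref{t1.s}. This proves \eqref{t6}.

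The delicate condition is \eqref{t5}. In the symmetric branch of \eqref{t3.s} I choose the cut-off radius in \eqref{t5} to be some $\rho'<\rho$; then for large $n$ every cube with $|b|<\rho'$ satisfies $\bar b\cup(-\bar b)\subseteq B_\rho(0)$, and pairing $b$ with $-b$ gives $b_i\nu(a,\bar b)+(-b_i)\nu(a,\overline{-b})=0$, since $\nu(a,\overline{-b})=\nu(a,-\bar b)=\nu(a,\bar b)$ by symmetry. Thus the drift sum vanishes identically and \eqref{t5} is immediate. In the non-symmetric branch I first pass from $b_i$ to $y_i$: using $|b_i-y_i|\le 1/2n$ I write $\sum_{\sqrt{d}/n^{p}<|b|<\rho}b_i\nu(a,\bar b)=\int_{A_n}y_i\,\nu(a,dy)+E_n$ with $|E_n|\le\tfrac1{2n}\nu(a,A_n)$. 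The error is bounded, because $\tfrac1{2n}\nu(a,A_n)\le\tfrac1{2n}\big[\nu(a,B_\rho(0)\setminus B_{\sqrt{d}/n^{p}-\sqrt{d}/2n}(0))+\sup_x\nu(x,B^{c}_\rho(0))\big]$, where the first summand is $O(1)$ by the second bound of \eqref{t3.s} (choose $\varepsilon$ with $\varepsilon^{p}=\sqrt{d}/n^{p}-\sqrt{d}/2n$, so that $\varepsilon$ is of order $1/n$) and the second is finite by \eqref{t1.s}. Comparing the blocky annulus $A_n$ with the Euclidean annulus $\{\sqrt{d}/n^{p}<|y|<\rho\}$, the main term $\int_{\{\sqrt{d}/n^{p}<|y|<\rho\}}y_i\,\nu(a,dy)$ is bounded uniformly in $a$ and $n$ by the first bound of \eqref{t3.s} (now matching $\varepsilon^{p}=\sqrt{d}/n^{p}$), and the remaining discrepancy lives in two thin shells, near $|y|=\rho$ and near $|y|=\sqrt{d}/n^{p}$.

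The main obstacle is the inner shell $B_{\sqrt{d}/n^{p}+\sqrt{d}/2n}(0)\setminus B_{\sqrt{d}/n^{p}-\sqrt{d}/2n}(0)$, where $|y|$ is of order $n^{-p}$ but the shell may carry $\nu(a,\bullet)$-mass of order $n$; consequently the naive estimate $\int_{\text{shell}}|y_i|\,\nu(a,dy)$ only yields $O(n^{1-p})$, which is useless for $p<1$. One therefore has to exploit the \emph{signed} cancellation encoded in the first bound of \eqref{t3.s} rather than estimate $|y_i|$ against the shell mass; this is precisely the role played by the middle estimate of \eqref{t4.d1} for the inner shell in the proof of Proposition~\ref{p1.12}, the point being that the genuinely signed shell integral, together with the correction $\tfrac{\sqrt{d}}{n}\nu(a,B_\rho(0)\setminus B_{\sqrt{d}/n^{p}-\sqrt{d}/2n}(0))$ controlled by the second bound of \eqref{t3.s}, stays bounded. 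The outer shell near $|y|=\rho$ is harmless, since there $|y_i|\le\rho+\sqrt{d}/2n$ and the $\nu$-mass is dominated by $\sup_x\nu(x,B^{c}_{\rho/2}(0))<\infty$ via \eqref{t1.s}. Collecting the bounded main term, the bounded error $E_n$ and the two shell contributions yields $\limsup_{n\to\infty}\sup_a\big|\sum_{|b|<\rho}b_i C^{n,p}(a,a+b)\big|<\infty$, which is \eqref{t5} and completes the plan.
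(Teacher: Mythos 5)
The paper states Proposition~\ref{p2.5} without proof, so the only available benchmark is the proofs of the analogous Propositions~\ref{p1.11}, \ref{p1.12} and \ref{p2.2}, and your overall strategy -- rewriting $\sum_b(\cdots)C^{n,p}(a,a+b)$ as $\int_{A_n}\varphi([y]_n)\,\nu(a,dy)$ over the ``blocky annulus'' $A_n$ and comparing with Euclidean annuli -- is exactly the technique used there. Your treatment of \eqref{t4} via \eqref{t2.s}, of \eqref{t6} via $|b|\leq 2|y|$ and \eqref{t4.s}, and of the symmetric branch of \eqref{t5} is correct (in the symmetric branch there is the usual tacit assumption that $\nu(x,\cdot)$ does not charge the grid hyperplanes, since $\overline{-b}$ and $-\bar b$ differ by boundary faces; the paper makes the same silent move elsewhere).

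The genuine gap is exactly where you flag an ``obstacle'' and then wave it away: the inner shell $S_n:=B_{\sqrt{d}/n^{p}+\sqrt{d}/2n}(0)\setminus B_{\sqrt{d}/n^{p}-\sqrt{d}/2n}(0)$ in the non-symmetric branch of \eqref{t5}. You assert that the ``signed cancellation encoded in the first bound of \eqref{t3.s}'' controls the discrepancy there, but the first bound of \eqref{t3.s} only controls signed integrals $\int_{B_\rho(0)\setminus B_{\varepsilon^{p}}(0)}y_i\,\nu(x,dy)$ over \emph{rotationally symmetric} annuli, whereas the discrepancy between $A_n$ and the Euclidean annulus is the non-radial set $E_n=\{y\in S_n:|[y]_n|>\sqrt{d}/n^{p}\}$ determined by the grid; no hypothesis in \eqref{t3.s} gives cancellation on $E_n$, and the crude bound $\int_{S_n}|y_i|\,\nu(a,dy)\leq (2\sqrt{d}/n^{p})\,\nu(a,S_n)$ is only $O(n^{1-p})$ under the second bound of \eqref{t3.s} (or $O(n^{p})$ under \eqref{t4.s}), hence useless for $0<p<1$. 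Tellingly, the condition you invoke to close the argument -- ``the middle estimate of \eqref{t4.d1}'' -- is an \emph{explicit additional hypothesis} of Proposition~\ref{p1.12}, with analogue $\lim_{\varepsilon\downto 0}\varepsilon^{p}\sup_{x}\nu\big(x,B_{\sqrt{d}\varepsilon^{p}+(\sqrt{d}/2)\varepsilon}(0)\setminus B_{\sqrt{d}\varepsilon^{p}-(\sqrt{d}/2)\varepsilon}(0)\big)=0$ appearing as the fourth condition of \eqref{c4.s2}, but it has no counterpart in \eqref{t3.s}. So your proof of \eqref{t5} is complete only for $p=1$; for $p<1$ you must either add such an inner-shell condition to \eqref{t3.s} (which, given \eqref{t4.d1} and \eqref{c4.s2}, is very likely what the authors intended) or supply an argument extracting cancellation on $E_n$ that \eqref{t3.s} as stated does not provide.
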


In the remaining part of this section we discuss some examples satisfying \eqref{c1.s}--\eqref{c6.s}.

\begin{example}[L\'evy processes]\label{e2.6}
    Let $\process{X}$ be a L\'evy process with semimartingale characteristics (L\'evy triplet) $(b,0,\nu(dy))$ with respect to some truncation function $h(x)$. Then, the conditions in \eqref{c1.s}--\eqref{c3.s} and, for the discretization \eqref{eq2.1} with $p<1/2$, and  \eqref{c4.s1}--\eqref{c6.s1}  are trivially satisfied.
\end{example}

\begin{example}[Stable-like processes]\label{e2.7}
    Let $\alpha:\real^{d}\to(0,2)$  be  a continuously differentiable function with bounded derivatives, such that $0<\underline{\alpha}=\alpha(x)=\overline{\alpha}<2$ for all $x\in\real^{d}$. Under these assumptions, it has been shown in \cite{Bass-1988,Hoh-2000, Kuhn-thesis, Negoro-1994}, \cite[Theorem 3.5]{Schilling-PTRF-1998} and \cite[Theorem 3.3]{Schilling-Wang-2013} that the operator \eqref{eq1.5} defines  a \emph{stable-like process}, i.e.\ a unique Feller process $\process{X}$ which is also a semimartingale. Its  (modified) characteristics (with respect to a symmetric truncation function $h(x)$) are of the form
    \begin{align*}
        B(h)_t &=0,\\
        A_t&=0,\\
        \tilde{A}^{i,k}_t &=\int_0^{t}\int_{\real^{d}}h_i(y)h_k(y)\frac{dy}{|y|^{d+\alpha(X_s)}}\,ds,\\
        N(ds,dy)&=\frac{dy\,ds}{|y|^{d+\alpha(X_s)}}.
    \end{align*}
    If $\alpha(x)\equiv\alpha\in (0,2)$ is constant, then we have a rotationally invariant $\alpha$-stable L\'evy process. Clearly, $\process{X}$ satisfies \eqref{c1.s}--\eqref{c3.s}. Using the discretisation \eqref{eq2.1} with $0<p\leq 1$, the continuity of $\alpha(x)$ and the dominated convergence theorem ensure that \eqref{c4.s1}--\eqref{c6.s1} hold, too.
\end{example}

\begin{example}[L\'evy-driven SDEs]\label{e2.8}
    Let $\process{L}$ be an $n$-dimensional L\'evy process and $\Phi:\real^{d}\to\real^{d\times n}$ be bounded and locally Lipschitz continuous. The SDE
    $$
        dX_t =\Phi(X_{t-})dL_t,\quad X_0=x\in\real^{d},
    $$
    admits a unique strong solution which is a Feller semimartingale, see  \cite{Kuhn-Preprint-2016}
    or  \cite[Theorem 3.5]{Schilling-Schnurr-2010}. In particular, if
    \begin{itemize}
    \item [(i)]
        $L_t=(l_t,t)$, $t\geq0$, where $\process{l}$ is a $d$-dimensional L\'evy process determined by the L\'evy triplet $(0,0,\nu(dy))$ such that the L\'evy measure $\nu(dy)$ is symmetric;
    \item [(ii)]
        $\Phi(x)=(\phi(x)\mathbb{I},\mathbb{0})$, $x\in\real^{d}$, where $\phi:\real^{d}\to\real$  is locally Lipschitz continuous and $0<\inf_{x\in\real^{d}}|\phi(x)|\leq\sup_{x\in\real^{d}}|\phi(x)|<\infty$, and $\mathbb{I}$ and $\mathbb{0}$ are the $d\times d$ identity matrix and the  $d$-dimensional  column vector of zeros,
    \end{itemize}
    then, according to \cite[Theorem 3.5]{Schilling-PTRF-1998},  $\process{X}$ is a $d$-dimensional Feller semimartingale which is determined by the (modified) characteristics (with respect to a symmetric truncation function $h(x)$) of the form
    \begin{align*}
        B(h)_t &=0,\\
        A_t &=0,\\
        \tilde{A}^{i,k}_t &=\int_0^{t}\int_{\real^{d}}h_i(y)h_k(y)\,\nu\left(dy/|\phi(X_s)|\right)\,ds,\\
        N(ds,dy) &=\nu\big(dy/|\phi(X_s)|\big).
    \end{align*}
    It is obvious, that the solution $\process{X}$ satisfies \eqref{c1.s}--\eqref{c3.s}. For the discretization \eqref{eq2.1}, the boundedness (away from zero and infinity) of $\phi(x)$, allows to prove the first two and the fourth condition in \eqref{c4.s1} and \eqref{c5.s1} as well as the first two conditions in \eqref{c6.s1}. Hence, it remains to prove that
    \begin{gather*}
        \lim_{n\to\infty}\sup_{x\in B_R(0)}
        \lVert\nu\left(\bullet/|\phi([x]_n)|\right)-\nu\left(\bullet/|\phi(x)|\right)\rVert_{\mathrm{TV}(B^{c}_\varepsilon(0))}=0,
        \quad \varepsilon>0,\\
    \lim_{n\to\infty}\sup_{x\in B_R(0)}\int_{B_1(0)}|y|^{2}
    \,\lVert\nu\left(dy/|\phi([x]_n)|\right)-\nu\left(dy/|\phi(x)|\right)\rVert_{\mathrm{TV}}=0,
    \intertext{and}
    \lim_{\varepsilon\downto 0}\varepsilon\sup_{x\in B_R(0)}\int_{B_1(0)\setminus B_{\varepsilon^{p}}(0)} |y|\,\nu\left(dy/|\phi(x)|\right)=0,
    \end{gather*}
    hold for all $R>0$.

    To prove the first relation we proceed as follows. Fix $R>0$ and $\varepsilon>0$. Due to the continuity and boundedness (away from zero and infinity) of $\phi(x)$ we can use dominated convergence theorem to see that the function
    $$
        x\mapsto\nu\left(B^{c}_\varepsilon(0)/|\phi(x)|\right)
    $$
    is (uniformly) continuous on any ball $B_R(0)$; this implies the desired relation.

    Essentially the same argument implies that for every $R>0$, the function
    $$
        x\mapsto\int_{B_1(0)}|y|^{2}\,\nu\left(dy/|\phi(x)|\right)
    $$
    is (uniformly) continuous on $B_R(0)$.

    Finally, we check the last relation. Fix $R>0$ and $0<p<1$. Then we have
    \begin{align*}
    \lim_{\varepsilon\downto 0}  \varepsilon & \sup_{x\in B_R(0)}\int_{B_1(0)\setminus B_{\varepsilon^{p}}(0)}|y|\,\nu\left(dy/|\phi(x)|\right)\\
    &=\lim_{\varepsilon\downto 0}\varepsilon^{1-p}\sup_{x\in B_R(0)}
    \int_{B_1(0)\setminus B_{\varepsilon^{p}}(0)}\varepsilon^{p}|y| \, \nu\left(dy/|\phi(x)|\right)\\
    &\leq\lim_{\varepsilon\downto 0}\varepsilon^{1-p}\sup_{x\in B_R(0)}
    \int_{B_1(0)\setminus B_{\varepsilon^{p}}(0)} |y|^{2}\,\nu\left(dy/|\phi(x)|\right)\\
    &\leq\lim_{\varepsilon\downto 0}\varepsilon^{1-p}\sup_{x\in B_R(0)}
    \int_{B_1(0)}|y|^{2}\,\nu\left(dy/|\phi(x)|\right)=0.
    \end{align*}

    Therefore, $\process{X}$ satisfies \eqref{c1.s}--\eqref{c6.s} if we use the discretisation \eqref{eq2.1} with $0<p<1$.
\end{example}

\begin{ack}
    Financial support through the Croatian Science Foundation (under Project 3526) (for Ante Mimica), the Croatian Science Foundation (under Project 3526)  and  the Dresden Fellowship Programme (for Nikola Sandri\'c) 
    is gratefully acknowledged.
\end{ack}
\bibliographystyle{alpha}
\bibliography{macha-ref}

\end{document}